\documentclass{amsart}
\usepackage{amssymb}
\usepackage{mathrsfs}
\usepackage{stmaryrd}
\usepackage{bbm}
\usepackage{color}
\usepackage{oldgerm}
\usepackage[english]{babel}
\usepackage[T1]{fontenc}
\usepackage[latin1]{inputenc}
\usepackage[all]{xy}
\usepackage{hyperref}
\usepackage[all]{xy}
\usepackage{extarrows}
\usepackage{tikz-cd}
\usepackage{enumerate}

\newtheorem{theorem}[subsection]{Theorem}
\newtheorem{proposition}[subsection]{Proposition}

\newtheorem{lemma}[subsection]{Lemma}

\theoremstyle{definition}

\newtheorem{definition}[subsection]{Definition}

\newtheorem{remark}[subsection]{Remark}

\numberwithin{equation}{subsection}

\usepackage{amsmath}
\usepackage{amsthm}
\usepackage{mathrsfs}
\usepackage{amsfonts}

\begin{document}

\title {The stable GKZ hypergeometric $\mathcal D$-module}

\author{Lei Fu}
\address{Yau Mathematical Sciences Center, Tsinghua University, Beijing 100084, P. R. China}
\email{leifu@tsinghua.edu.cn}

\date{}
\maketitle

\begin{abstract}
For an $(n\times N)$-matrix $A$  of rank $n$ with integer entries, Gelfand, Kapranov and
Zelevinsky introduce a system of differential equations, called the   
$A$-hypergeometric system. We define the stable GKZ hypergeometric $\mathcal D$-module 
using cohomological functors, which is closely related to the $A$-hypergeometric $\mathcal D$-module
and the $\mathcal D$-module underlying the 
better behaved GKZ system introduced by Borisov and Horja. 
We prove the stable GKZ hypergeometric $\mathcal D$-module is holonomic
and is an integrable connection of rank $n!\mathrm{vol}(\Delta_\infty)$
on the Zariski open subset parametrizing nondegenerate Laurent polynomials, where $\Delta_\infty$ 
is the Newton polytope at $\infty$.

\medskip
\noindent {\bf Key words:} GKZ hypergeometric $\mathcal D$-module, de Rham complex.

\medskip
\noindent {\bf Mathematics Subject Classification:} Primary 14F10. Secondary 33C70.
\end{abstract}

\section*{Introduction}
\subsection{The $A$-hypergeometric $\mathcal D$-module}
Let $$A=\left(\begin{array}{ccc} w_{11}&\cdots&w_{1N}\\
\vdots&&\vdots\\
w_{n1}&\cdots&w_{nN}\end{array} \right)$$ be an $(n\times N)$-matrix
of rank $n$ with integer entries. Denote the column vectors of $A$
by ${\mathbf w}_1,\ldots, {\mathbf w}_N$. 
Let $\boldsymbol\gamma=(\gamma_1,\ldots, \gamma_n)\in \mathbb C^n$. 
In \cite{GKZ1}, Gelfand, Kapranov and Zelevinsky define
the \emph{$A$-hypergeometric system} to be the system of
differential equations
\begin{eqnarray}\label{GKZeqn1}
\sum_{j=1}^N w_{ij} x_j\frac{\partial \Phi}{\partial x_j}+\gamma_i
\Phi=0 \quad (i=1,\ldots, n),
\end{eqnarray}
\begin{eqnarray}\label{GKZeqn2}
\square_{\boldsymbol\lambda}\Phi:=\Big(\prod_{\lambda_j>0} \Big(\frac{\partial}{\partial x_j}\Big)^{\lambda_j}-
\prod_{\lambda_j<0} \Big(\frac{\partial}{\partial x_j}\Big)^{-\lambda_j}\Big)\Phi=0,
\end{eqnarray}
where for the second system of equations, $\boldsymbol\lambda=(\lambda_1,\ldots,
\lambda_N)\in\mathbb Z^N$ goes over the family of integral linear
relations
$$\sum_{j=1}^N
\lambda_j{\mathbf w}_j=0$$ among ${\mathbf w}_1,\ldots, {\mathbf w}_N$. We call the corresponding 
$\mathcal D_{\mathbb A^N}$-module 
$$\mathcal H_{\boldsymbol\gamma}:=
\mathcal D_{\mathbb A^N}\Big/\mathcal D_{\mathbb A^N}\Big(\sum_{j=1}^N w_{ij} x_j\partial_{x_j}+\gamma_i,  \;
\square_{\boldsymbol\lambda}\Big)$$
the \emph{$A$-hypergeometric $\mathcal D$-module}. 
A solution of the $A$-hypergeometric system is given by
\begin{eqnarray}\label{eqn:gkzint}
f(x_1,\ldots, x_N)=\int_{\sigma} t_1^{\gamma_1}\cdots
t_n^{\gamma_n} e^{\sum_{j=1}^N x_jt_1^{w_{1j}}\cdots
t_n^{w_{nj}}}\frac{dt_1}{t_1}\cdots \frac{dt_n}{t_n}
\end{eqnarray}
where $\sigma$ is a real $n$-dimensional cycle in $\mathbb G_m^n$. (Confer 
\cite[(2.6)]{A}.) 

Adolphson (\cite[Theorem 3.9]{A}) proves that the $A$-hypergeometric $\mathcal D$-module is holonomic. 
Define the \emph{Newton polytope} $\Delta_\infty$ at $\infty$ to be the convex hull of
$\{0,\mathbf w_1,\ldots, \mathbf w_n\}$ in $\mathbb R^n$. 
If $\boldsymbol\gamma$ satisfies the so-called semi-nonresonance condition, 
Adolphson (\cite[Corollary 5.20]{A}) proves that the $A$-hypergeometric $\mathcal D$-module has rank 
$n!\mathrm{vol}(\Delta_\infty)/[\mathbb Z^n:M']$, where
$M'$ is the subgroup of $\mathbb Z^n$ generated by $\mathbf w_1,\ldots, \mathbf w_N$, and $\mathrm{vol}$ is the standard
Lebesgue measure. Without the semi-nonresonance condition, the $A$-hypergeometric $\mathcal D$-module may not have
the expected rank. The dependence of the rank on $\boldsymbol\gamma$ is thoroughly studied by
Matusevich-Miller-Walther \cite{MMW}. In this paper, we use 
cohomology functors on the derived category of $\mathcal D$-modules to define a stable GKZ hypergeometric $\mathcal D$-module 
so that it has the expected rank without any assumption on $\boldsymbol\gamma$.

\subsection{$\mathcal D$-modules associated to exponential functions and power functions}

Consider the $\mathcal D_{\mathbb A^1}$-module $\mathcal L=\mathcal O_{\mathbb A^1}e^{x}$. 
For section $f$ of $\mathcal O_{\mathbb A^1}$, we have 
$$\partial_x(f e^{x})=\Big(\frac{\partial f}{\partial x}+f\Big)e^{x}.$$ We have 
an isomorphism of $\mathcal D_{\mathbb A^1}$-modules 
$$\mathcal O_{\mathbb A^1}\stackrel\cong \to \mathcal L, \quad f\mapsto fe^{x}$$
so that the action of $\partial_x$ on a section $f$ of $\mathcal O_{\mathbb A^1}$ is given by 
$$\Big(e^{-x} \circ \frac{\partial}{\partial x} \circ e^{x}\Big)f=\frac{\partial f}{\partial x}+f.$$
$\mathcal L$ is an integrable connection on $\mathbb A^1$. 

Consider the $\mathcal D_{\mathbb G_m^n}$-module 
$$K_{\boldsymbol \gamma}=\mathcal O_{\mathbb G_m^n}\mathbf t^{\boldsymbol\gamma},$$
where $\mathbf t^{\boldsymbol\gamma}=t_1^{\gamma_1}\cdots t_n^{\gamma_n}$. 
For any section $g$ of $\mathcal O_{\mathbb G_m^n}$, we have 
$$\partial_{t_i}(g \mathbf t^{\boldsymbol\gamma})=\Big(\frac{\partial g}{\partial t_i}
+\frac{\gamma_i}{t_i}g\Big)\mathbf t^{\boldsymbol\gamma}.$$ 
We have an isomorphism of $\mathcal D_{\mathbb G_m^n}$-modules 
$$\mathcal O_{\mathbb G_m^n}\stackrel\cong\to \mathcal K_{\boldsymbol\gamma}, 
\quad g\mapsto g \mathbf t^{\boldsymbol\gamma}$$
so that the action of $\partial_{t_i}$ on a section $g$ of $\mathcal O_{\mathbb G_m^n}$ is given by 
$$\Big( \mathbf t^{-\boldsymbol\gamma} \circ \frac{\partial}{\partial t_i}  \circ \mathbf t^{\boldsymbol\gamma}\Big)g
=\frac{\partial g}{\partial t_i}+\frac{\gamma_i}{t_i} g.$$
$\mathcal K_{\boldsymbol\gamma}$ is an integrable 
connection on $\mathbb G^n_m$. 

\begin{remark} $\mathcal K_{\boldsymbol\gamma}$ depends only on $\boldsymbol\gamma\mod \mathbb Z^n$. Indeed, if 
$\boldsymbol \gamma=\boldsymbol \gamma'+\mathbf k$ for some $\mathbf k=(k_1,\ldots, k_n)\in\mathbb Z^n$, then we have an
isomorphism
$$\mathcal O_{\mathbb G_m^n}\stackrel\cong\to \mathcal O_{\mathbb G_m^n},\quad  g\mapsto \mathbf t^{\mathbf k} g$$
transforming $\mathbf t^{-\boldsymbol\gamma} \circ \frac{\partial}{\partial t_i}  \circ \mathbf t^{\boldsymbol\gamma}$
to $\mathbf t^{-\boldsymbol\gamma'} \circ \frac{\partial}{\partial t_i}  \circ \mathbf t^{\boldsymbol\gamma'}$.
For this reason, from now on, we denote $\mathcal K_{\boldsymbol\gamma}$ by $\mathcal K_{\overline{\boldsymbol\gamma}}$, where
$$\overline{\boldsymbol\gamma}=\boldsymbol\gamma+\mathbb Z^n\in \mathbb C^n/\mathbb Z^n.$$
\end{remark}

Let $$\pi_1: \mathbb G^n_m\times \mathbb A^N \to \mathbb G^n_m,\quad \pi_2: \mathbb G^n_m 
\times \mathbb A^N\to \mathbb A^N$$ be the projections, and let $F$
be the morphism
$$F:\mathbb G^n_m \times \mathbb A^N\to \mathbb A^1, \quad F(\mathbf t, \mathbf x)=\sum_{j=1}^N x_j \mathbf t^{\mathbf w_j},$$
where $\mathbf t=(t_1,\ldots, t_n)$, $\mathbf x=(x_1, \ldots, x_N)$ and $\mathbf t^{\mathbf w_j}=t_1^{w_{1j}}\cdots t_n^{w_{nj}}.$
The $\mathcal D_{\mathbb G^n_m\times \mathbb A^N}$-module 
$\pi_1^*\mathcal K_{\overline{\boldsymbol\gamma}}\otimes_{\mathcal O_{\mathbb G_m^n\times\mathbb A^N}}F^*\mathcal L$
is $\mathcal O_{\mathbb G^n_m\times \mathbb A^N}$ so that for section $h$ of $\mathcal O_{\mathbb G^n_m\times \mathbb A^N}$,  
we have 
\begin{align*}
\partial_{t_i}h&=\Big(\mathbf t^{-\boldsymbol\gamma}e^{-F(\mathbf t, \mathbf x)}\circ  \frac{\partial}{\partial t_i}
\circ \mathbf t^{\boldsymbol\gamma}e^{F(\mathbf t, \mathbf x)}\Big)h=
\frac{\partial h}{\partial t_i}+\frac{1}{t_i}\Big(\gamma_i +\sum_{j=1}^N x_jw_{ij} \mathbf t^{\mathbf w_j}\Big) h,\\
\partial_{x_j}h&=\Big(\mathbf t^{-\boldsymbol\gamma}e^{-F(\mathbf t, \mathbf x)}\circ \frac{\partial}{\partial x_j}\circ 
\mathbf t^{\boldsymbol\gamma}e^{F(\mathbf t, \mathbf x)}\Big)h=
\frac{\partial h}{\partial x_j}+ \mathbf t^{\mathbf w_j} h.
\end{align*}

\begin{definition} The \emph{stable GKZ hypergeometric $\mathcal D_{\mathbb A^N}$-module} is defined to be
$$\mathrm{Hyp}_{\overline{\boldsymbol\gamma}}= R\pi_{2+}(\pi_1^*\mathcal K_{\overline{\boldsymbol\gamma}}\otimes
_{\mathcal O_{\mathbb G^n_m\times\mathbb A^N}} F^* \mathcal L).$$ 
\end{definition}

Since $\pi_1^*\mathcal K_{\overline{\boldsymbol\gamma}}\otimes
_{\mathcal O_{\mathbb G^n_m\times\mathbb A^N}} F^* \mathcal L$ is an integrable connection, it is holonomic. By 
\cite[Theorem 3.2.3]{Hotta}, 
$\mathrm{Hyp}_{\overline{\boldsymbol\gamma}}$ is an object 
in the derived category $D_h^b(\mathcal D_{\mathbb A^N})$ of bounded complexes of $\mathcal D_{\mathbb A^N}$-modules
with holonomic cohomologies. In section 1, we show that $\mathrm{Hyp}_{\overline{\boldsymbol\gamma}}$ is actually 
a holonomic $\mathcal D_{\mathbb A^N}$-module, that is, $R^q\pi_{2+}(\pi_1^*\mathcal K_{\overline{\boldsymbol\gamma}}\otimes
_{\mathcal O_{\mathbb G^n_m\times\mathbb A^N}} F^* \mathcal L)$ for $q\not=0$.

A complex Laurent polynomial $$f=\sum_{j=1}^M a_j\mathbf t^{\mathbf w_j}$$
is called \emph{nondegenerate with respect to $\Delta_\infty$} if 
for any face $\Gamma$ of $\Delta_\infty$ not containing the origin, the system of equations 
$$\frac{\partial f_\Gamma}{\partial t_1}=\cdots =\frac{\partial f_\Gamma}{\partial t_n}=0$$ has no solution 
in $(\mathbb C-\{0\})^n$, where $f_\Gamma=\sum_{\mathbf w_j\in \Gamma}a_j \mathbf t^{\mathbf w_j}$. 
The main result of this paper is the following theorem.

\begin{theorem}\label{thm:holonomic} ${}$
\begin{enumerate}[(i)]
\item $\mathrm{Hyp}_{\overline{\boldsymbol\gamma}}$ 
is a holonomic $\mathcal D_{\mathbb A^N}$-module. 
\item 
Let $U$ be a Zariski open subset of $\mathbb A^N$ so that for any rational point 
$\mathbf a=(a_1, \ldots,a_N)$ in $U$,  the Laurent polynomial
$F({\mathbf t}, \mathbf a)=\sum_{j=1}^N a_j \mathbf  t^{\mathbf w_j}$ 
is nondegenerate with respect to $\Delta_\infty$. Then 
the restriction to $U$ of $\mathrm{Hyp}_{\overline{\boldsymbol\gamma}}$
is an integrable connection of rank 
$n!\mathrm{vol}(\Delta_\infty)$. 
\end{enumerate}
\end{theorem}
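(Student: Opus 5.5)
Since $\pi_2$ is affine, I will compute $R\pi_{2+}$ as a relative de Rham complex. Put $\mathcal M=\pi_1^*\mathcal K_{\overline{\boldsymbol\gamma}}\otimes F^*\mathcal L$. Then $\mathrm{Hyp}_{\overline{\boldsymbol\gamma}}$ is represented, up to a shift by $n$, by the complex of global sections
$$0\to \mathbb C[\mathbf t^{\pm1},\mathbf x]\to \bigoplus_i \mathbb C[\mathbf t^{\pm1},\mathbf x]\,\tfrac{dt_i}{t_i}\to\cdots\to \mathbb C[\mathbf t^{\pm1},\mathbf x]\,\tfrac{dt_1}{t_1}\wedge\cdots\wedge\tfrac{dt_n}{t_n}\to 0.$$
Here the differential is the Koszul differential of the commuting operators
$$D_i=t_i\partial_{t_i}+\gamma_i+\sum_j w_{ij}x_j\mathbf t^{\mathbf w_j},$$
and $\partial_{x_j}$ acts by $\partial_{x_j}+\mathbf t^{\mathbf w_j}$. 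Thus $R^q\pi_{2+}\mathcal M$ is the $(q+n)$-th Koszul cohomology of $(D_1,\dots,D_n)$ on $R=\mathbb C[\mathbf t^{\pm1},\mathbf x]$, viewed as a $\mathbb C[\mathbf x]\langle\partial_{\mathbf x}\rangle$-module.

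For (i), I only need vanishing outside degree $0$. By base change it suffices to check the fibres together with a flatness argument. My plan is to use Fourier transform to identify $\mathrm{Hyp}_{\overline{\boldsymbol\gamma}}$ with something concentrated in one degree. Let $\iota:\mathbb G_m^n\to\mathbb A^N$ be $\mathbf t\mapsto(\mathbf t^{\mathbf w_1},\ldots,\mathbf t^{\mathbf w_N})$. The projection formula and the standard formula for Fourier transform of a pushforward give
$$\mathrm{Hyp}_{\overline{\boldsymbol\gamma}}\cong \mathfrak F\big(R\iota_{+}\mathcal K_{\overline{\boldsymbol\gamma}}\big),$$
up to shift and sign conventions. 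This holds because $\mathcal M$ is the pullback of $\iota_+$-data tensored with the kernel $e^{\sum x_jy_j}$ of the Fourier transform on $\mathbb A^N\times\mathbb A^N$.

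Since $A$ has rank $n$, $\iota$ is finite onto its image (the torus orbit) composed with a locally closed immersion: it is quasi-finite and affine. Hence $R\iota_+$ of a module is a single module, because affine quasi-finite morphisms have $\iota_+$ exact up to a shift in this setting. I would justify this by factoring $\iota$ as a finite étale map to its image torus, followed by an affine locally closed immersion. Both pushforwards are exact for affine morphisms of relative dimension $0$. Fourier transform is an exact equivalence preserving holonomicity, so $\mathrm{Hyp}_{\overline{\boldsymbol\gamma}}$ is a single holonomic module. This gives (i).

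For (ii), I will first show that over $U$ the module is $\mathcal O$-coherent. The plan is to show that the Koszul complex restricted to $U$ has only top cohomology, and that this cohomology is a locally free $\mathcal O_U$-module of rank $n!\mathrm{vol}(\Delta_\infty)$. A coherent $\mathcal D_U$-module that is $\mathcal O_U$-coherent is then an integrable connection.

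The method is to filter $R$ by the Newton polyhedron filtration associated with $\Delta_\infty$ (Adolphson–Sperber/Kouchnirenko). The weight of $\mathbf t^{\mathbf k}$ is the smallest $c$ with $\mathbf k\in c\Delta_\infty$, and $x_j$ gets weight $0$. The associated graded of $D_i$ is multiplication by $\sum_j w_{ij}x_j\mathbf t^{\mathbf w_j}$, that is, by $t_i\partial_{t_i}F$, on the graded ring $\mathrm{gr}R$. The nondegeneracy hypothesis at each $\mathbf a\in U$ makes these elements a regular sequence fibrewise. This is Kouchnirenko's theorem: the quotient has dimension $n!\mathrm{vol}(\Delta_\infty)$, with Cohen–Macaulayness of the Stanley–Reisner-type ring $\mathrm{gr}$ of the semigroup ring of the cone over $\Delta_\infty$.

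The main obstacle is that the cone semigroup ring uses all lattice points of the cone, whereas $R$ is the full Laurent ring. So I would use the fan of cones over faces of $\Delta_\infty$ not containing $0$, which cover $\mathbb R^n$ when $0$ is interior. If $0$ lies on the boundary, the fan misses a region, and that case needs care, possibly through an argument localizing in the $\mathbf t$ directions where the polytope is unbounded. Granting this, a spectral sequence argument shows that the lower Koszul cohomology vanishes and that the top cohomology is filtered with graded pieces free over $\mathcal O_U$ of total rank $n!\mathrm{vol}(\Delta_\infty)$. Exhaustiveness and separatedness of the filtration then give local freeness of that rank, which proves (ii).
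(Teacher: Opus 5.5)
Your part (i) is correct and is essentially the paper's argument (Proposition \ref{prop:FourGKZ}). Base change and the projection formula give $\mathrm{Hyp}_{\overline{\boldsymbol\gamma}}\cong\mathfrak F(R\iota_+\mathcal K_{\overline{\boldsymbol\gamma}})$. Then $R\iota_+$ is exact because $\iota$ is a finite morphism followed by affine immersions, and $\mathfrak F$ is exact and preserves holonomicity. Your part (ii) also works when $0$ is an interior point of $\Delta_\infty$, since then $\delta=\mathbb R^n$ and the Newton filtration exhausts the Laurent ring.

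The genuine gap is the case $0\in\partial\Delta_\infty$, which you explicitly grant. There $\rho=+\infty$ off $\delta$, so Kouchnirenko's method can only be run on the complex $\mathcal C^\cdot_{\boldsymbol\gamma}$ over $\mathbb C[\delta\cap\mathbb Z^n]$. One then has to prove that this smaller complex computes $\mathrm{Hyp}_{\overline{\boldsymbol\gamma}}$. That comparison is false for an arbitrary lift $\boldsymbol\gamma$ of $\overline{\boldsymbol\gamma}$, so it cannot come from a formal localization in the $\mathbf t$-directions. Already for $n=N=1$, $A=(1)$, consider the quotient $\mathbb C[x][t^{\pm1}]/\mathbb C[x][t]=t^{-1}\mathbb C[x][t^{-1}]$. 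On it, $D=t\partial_t+\gamma+xt$ acts triangularly for the pole order, with diagonal entries $\gamma-k$ for $k\ge 1$. Hence the inclusion of complexes is a quasi-isomorphism if and only if $\gamma\notin\mathbb Z_{>0}$. For $\gamma=1$ it fails even over $\{x\neq 0\}$: the generator $1$ of the cone-ring $H^1$ equals $D(x^{-1}t^{-1})$ in the Laurent complex. Globally, the cone ring gives $\mathcal D/\mathcal D(x\partial_x+1)$ rather than $\mathrm{Hyp}_{\overline 0}\cong\mathcal D/\mathcal D\,x\partial_x$.

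The missing idea is Proposition \ref{prop:smalldR}, which the paper proves in four steps.
\begin{enumerate}[(1)]
\item Take a regular refinement $\Sigma$ of the fan of faces of $\check\delta$.
\item Choose the lift $\boldsymbol\gamma$ so that $l_i(\boldsymbol\gamma)\notin\mathbb Z_{>0}$ for every ray $l_i$ of $\Sigma$ (condition (\ref{condition})).
\item Apply Deligne's filtration argument (Lemma \ref{prop:logdR}). It replaces the de Rham complex of $p_1^*j_+\mathcal K_{\overline{\boldsymbol\gamma}}\otimes\overline F^*\mathcal L$ on $X(\Sigma)\times\mathbb A^N$ by the logarithmic de Rham complex of $\widetilde{\mathcal K}_{\boldsymbol\gamma}$, which is free on the $\mathrm dt_i/t_i$.
\item Push down to $\mathrm{Spec}\,\mathbb C[\delta\cap\mathbb Z^n]\times\mathbb A^N$ using $Rp_*\mathcal O_{X(\Sigma)}\cong\mathcal O_{X(\Sigma_0)}$, i.e.\ rational singularities of toric varieties.
\end{enumerate}
Only after this does your filtration argument apply.

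A smaller correction: in $\mathrm{gr}$ only the monomials $\mathbf t^{\mathbf w_j}$ with $\rho(\mathbf w_j)=1$ survive. So the Koszul sequence is $t_i\partial_{t_i}f$, where $f$ is the part of $F$ supported on faces of $\Delta_\infty$ not containing $0$, not $t_i\partial_{t_i}F$.
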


\subsection{de Rham complex} By definition, we have
$$\mathrm{Hyp}_{\overline{\boldsymbol\gamma}}\cong R\pi_{2*}(\mathcal D_{\mathbb A^N\leftarrow 
\mathbb G_m^n\times\mathbb A^N}
\otimes^L_{\mathcal D_{\mathbb G_m^n\times\mathbb A^N}}(\pi_1^*\mathcal K_{\overline{\boldsymbol\gamma}}\otimes
_{\mathcal O_{\mathbb G_m^n\times\mathbb A^N}}F^*\mathcal L)).$$
By \cite[Lemma 1.5.27]{Hotta} and the discussion after it, in the derived category of 
$(\pi_2^{-1}\mathcal D_{\mathbb A^N}, \mathcal D_{\mathbb G_m^n\times\mathbb A^N})$-modules, we have 
$$\mathcal D_{\mathbb A^N\leftarrow \mathbb G_m^n\times\mathbb A^N}
\cong \omega_{\mathbb G_m^n}\boxtimes \mathcal D_{\mathbb A^N}
\cong(\boldsymbol\Omega^\cdot_{\mathbb G_m^n}[n]\otimes_{\mathcal O_{\mathbb G_m^n}}\mathcal D_{\mathbb G_m^n})\boxtimes 
\mathcal D_{\mathbb A^N},$$ 
where righthand side is the relative de Rham complex, and $\omega_{\mathbb G^n_m}$ is the right 
$\mathcal D_{\mathbb G^n_m}$-module of top differential forms.
Since $\pi_2$ is affine, we have $R\pi_{2*}\cong \pi_{2*}.$ We thus have 
\begin{align*}
\mathrm{Hyp}_{\overline{\boldsymbol\gamma}}&\cong \pi_{2*}\Big(\big(
(\boldsymbol\Omega^\cdot_{\mathbb G_m^n}[n]\otimes_{\mathcal O_{\mathbb G_m^n}}\mathcal D_{\mathbb G_m^n})\boxtimes 
\mathcal D_{\mathbb A^N}\big)\otimes_{\mathcal D_{\mathbb G_m^n\times\mathbb A^N}}
(\pi_1^*\mathcal K_{\overline{\boldsymbol\gamma}}\otimes
_{\mathcal O_{\mathbb G_m^n\times\mathbb A^N}}F^*\mathcal L)\Big)\\
&\cong \pi_{2*}(\pi_1^* \boldsymbol\Omega^\cdot_{\mathbb G_m^n}\otimes_{\mathcal O_{\mathbb G_m^n\times \mathbb A^N}} 
\pi_1^*\mathcal K_{\overline{\boldsymbol\gamma}}\otimes_{\mathcal O_{\mathbb G_m^n\times \mathbb A^N}} F^*\mathcal L)[n].
\end{align*}
where the last expression is the relative de Rham complex for the integrable connection 
$\pi_1^*\mathcal K_{\overline{\boldsymbol\gamma}}\otimes_{\mathcal O_{\mathbb G_m^n\times \mathbb A^N}} F^*\mathcal L$.
Let 
$\mathcal D_{\overline{\boldsymbol\gamma}}^\cdot= \pi_{2*}(\pi_1^* \boldsymbol\Omega^\cdot_{\mathbb G_m^n}
\otimes_{\mathcal O_{\mathbb G_m^n\times \mathbb A^N}} 
\pi_1^*\mathcal K_{\overline{\boldsymbol\gamma}}\otimes_{\mathcal O_{\mathbb G_m^n\times \mathbb A^N}} F^*\mathcal L).$
We have 
$$\mathcal D_{\overline{\boldsymbol\gamma}}^q \cong \bigoplus_{1\leq i_1<\cdots<i_q\leq n} 
\pi_{2*}\mathcal O_{\mathbb G_m^n\times \mathbb A^N}
\frac{\mathrm dt_{i_1}}{t_{i_1}}\wedge\cdots\wedge \frac{\mathrm dt_{i_q}}{t_{i_q}}.$$
and the operator $d^q: \mathcal D_{\overline{\boldsymbol\gamma}}^q\to \mathcal D_{\overline{\boldsymbol\gamma}}^{q+1}$ is given by 
\begin{align}\label{aln:d}
&\;d\Big(h\frac{\mathrm dt_{i_1}}{t_{i_1}}\wedge\cdots\wedge 
\frac{\mathrm dt_{i_q}}{t_{i_q}}\Big)= \sum_{i=1}^n\Big(t_i\frac{\partial h}{\partial t_i}+\gamma_i h+
\sum_j w_{ij} x_j\mathbf t^{\mathbf w_j} h\Big)\frac{\mathrm dt_i}{t_i}\wedge\frac{\mathrm dt_{i_1}}{t_{i_1}}\wedge\cdots\wedge 
\frac{\mathrm dt_{i_q}}{t_{i_q}} \\
&\qquad\qquad=\Big(\mathbf t^{-\boldsymbol \gamma}e^{-F(\mathbf t, \mathbf x)}\circ 
\mathrm d_{\mathbf t} \circ \mathbf t^{\boldsymbol \gamma}e^{F(\mathbf t, \mathbf x)}\Big)
\Big(h\frac{\mathrm dt_{i_1}}{t_{i_1}}\wedge\cdots\wedge 
\frac{\mathrm dt_{i_q}}{t_{i_q}}\Big)\nonumber,
\end{align}
where $\mathrm d_{\mathbf t}$ is the exterior derivative relative to the variable $\mathbf t$. 
This is a complex of $\mathcal D_{\mathbb A^N}$-modules so that $\partial_{x_j}$ acts via
\begin{align}\label{aln:nabla}
\nabla_{\partial_{x_j}}=\mathbf t^{-\boldsymbol\gamma}e^{-F(\mathbf t,\mathbf x)}\circ \frac{\partial}{\partial x_j}
\circ \mathbf t^{\boldsymbol\gamma}e^{F(\mathbf t,\mathbf x)}.
\end{align}
We thus get the following description of $\mathrm{Hyp}_{\overline{\boldsymbol\gamma}}$ by the de Rham complex.

\begin{proposition}\label{prop:bigdR} Notation as above. We have 
$\mathrm{Hyp}_{\overline{\boldsymbol\gamma}}\cong \mathcal D_{\overline{\boldsymbol\gamma}}^\cdot[n]$.
\end{proposition}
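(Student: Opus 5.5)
The plan is to make the chain of isomorphisms displayed just before the statement rigorous, keeping track of the shift and the $\mathcal D_{\mathbb A^N}$-module structure. By definition of the direct image for the projection $\pi_2$,
$$\mathrm{Hyp}_{\overline{\boldsymbol\gamma}}= R\pi_{2*}\big(\mathcal D_{\mathbb A^N\leftarrow \mathbb G_m^n\times\mathbb A^N}\otimes^L_{\mathcal D_{\mathbb G_m^n\times\mathbb A^N}}\mathcal M\big),\qquad \mathcal M=\pi_1^*\mathcal K_{\overline{\boldsymbol\gamma}}\otimes_{\mathcal O}F^*\mathcal L .$$
First I will invoke \cite[Lemma 1.5.27]{Hotta}. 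It says that for a projection $X\times Y\to Y$ the transfer bimodule $\mathcal D_{Y\leftarrow X\times Y}$ is $\omega_X\boxtimes\mathcal D_Y$. It also gives the resolution of $\omega_X$ as a right $\mathcal D_X$-module by the Spencer-type complex $\boldsymbol\Omega^\cdot_X[n]\otimes_{\mathcal O_X}\mathcal D_X$. We take $X=\mathbb G_m^n$, which is smooth of dimension $n$.

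This resolution consists of locally free right $\mathcal D_{\mathbb G_m^n\times\mathbb A^N}$-modules. The derived tensor product can therefore be computed termwise. Using $(\Omega^q\otimes\mathcal D_X)\boxtimes\mathcal D_Y\otimes_{\mathcal D_{X\times Y}}\mathcal M\cong \pi_1^*\Omega^q_X\otimes_{\mathcal O}\mathcal M$, this gives the relative de Rham complex $\pi_1^*\boldsymbol\Omega^\cdot_{\mathbb G_m^n}\otimes\mathcal M[n]$. Its differential is the connection of $\mathcal M$ in the $\mathbf t$-directions. The residual left $\pi_2^{-1}\mathcal D_{\mathbb A^N}$-action is the connection of $\mathcal M$ in the $\mathbf x$-directions. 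These commute because $\mathcal M$ is integrable.

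Next, since $\pi_2$ is affine and each term is quasi-coherent over $\mathcal O_{\mathbb G_m^n\times\mathbb A^N}$, we get $R\pi_{2*}=\pi_{2*}$ termwise. Hence $\mathrm{Hyp}_{\overline{\boldsymbol\gamma}}\cong\mathcal D^\cdot_{\overline{\boldsymbol\gamma}}[n]$ as complexes of $\mathcal D_{\mathbb A^N}$-modules.

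It then remains to identify the explicit formulas. $\Omega^q_{\mathbb G_m^n}$ is free on the wedges of $\mathrm dt_i/t_i$. With the trivialization $h\mapsto h\,\mathbf t^{\boldsymbol\gamma}e^{F}$, the connection is conjugation of $\mathrm d_{\mathbf t}$ and $\partial_{x_j}$. This gives \eqref{aln:d} and \eqref{aln:nabla}; the identity $\mathrm d(\mathrm dt_i/t_i)=0$ makes \eqref{aln:d} a direct computation.

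The main obstacle is bookkeeping rather than substance: one must check that the isomorphism of \cite[Lemma 1.5.27]{Hotta} is an isomorphism of bimodules, so that the left $\mathcal D_{\mathbb A^N}$-structure is transported correctly. One must also get the sign and shift conventions so that the shift is exactly $[n]$, with the complex in degrees $-n,\dots,0$. I would handle this by checking in local coordinates that $\mathcal D_{\mathbb A^N}$ acts only through the $\mathcal D_Y$ factor and commutes with the Spencer differentials.
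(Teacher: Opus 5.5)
Your proposal is correct and follows the paper's argument step for step. You use \cite[Lemma 1.5.27]{Hotta} to write $\mathcal D_{\mathbb A^N\leftarrow\mathbb G_m^n\times\mathbb A^N}\cong\omega_{\mathbb G_m^n}\boxtimes\mathcal D_{\mathbb A^N}$, resolve it by $(\boldsymbol\Omega^\cdot_{\mathbb G_m^n}[n]\otimes\mathcal D_{\mathbb G_m^n})\boxtimes\mathcal D_{\mathbb A^N}$ and tensor termwise to get the relative de Rham complex, replace $R\pi_{2*}$ by $\pi_{2*}$ since $\pi_2$ is affine, and read off (\ref{aln:d}) and (\ref{aln:nabla}) from the trivialization $h\mapsto h\,\mathbf t^{\boldsymbol\gamma}e^{F}$; your remarks on the bimodule compatibility and the shift $[n]$ are bookkeeping the paper leaves implicit, and they are right.
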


To prove Theorem \ref{thm:holonomic}, we need to work with another de Rham complex which we now describe. Let
$$\delta=\{c_1\mathbf w_1+\cdots+c_n\mathbf w_N: c_1,\ldots, c_N\in \mathbb R_{\geq 0}\}$$
be the convex polyhedral cone generated by $\mathbf w_1,\ldots,\mathbf w_N$, 
let $\mathbb C[\delta\cap \mathbb Z^n]$
be the subring of $\mathbb C[\mathbf t^{\pm 1}]$
generated by monomials $\mathbf t^{\mathbf w}$ ($\mathbf w\in \delta\cap \mathbb Z^n$), 
let $q_2: \mathrm{Spec}\, \mathbb C[\delta\cap \mathbb Z^n]\times \mathbb A^N\to
\mathbb A^N$ be projection, let $(\mathcal C_{\boldsymbol\gamma}^\cdot, d)$ be the complex so that 
\begin{align*}
&\mathcal C_{\boldsymbol\gamma}^q=\bigoplus_{1\leq i_1<\cdots< i_q\leq n} q_{2*}
\mathcal O_{\mathrm{Spec}\, \mathbb C[\delta\cap \mathbb Z^n]\times\mathbb A^N} \frac{\mathrm dt_{i_1}}{t_{i_1}}\wedge\cdots\wedge 
\frac{\mathrm dt_{i_q}}{t_{i_q}},
\end{align*}
$d_q: \mathcal C_{\boldsymbol\gamma}^q\to \mathcal C_{\boldsymbol\gamma}^{q+1}$ is given 
by the formula (\ref{aln:d}), and $\partial_{x_j}$ is given by the formula
(\ref{aln:nabla}). In section 2, we prove the following.

\begin{proposition}\label{prop:smalldR} There exists a representative $\boldsymbol\gamma\in\mathbb C^n$ of 
$\overline{\boldsymbol\gamma}\in\mathbb C^n/\mathbb Z^n$ such that 
$\mathrm{Hyp}_{\overline{\boldsymbol\gamma}}\cong \mathcal C^\cdot_{\boldsymbol\gamma}[n].$
\end{proposition}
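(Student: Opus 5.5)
The plan is to realize $\mathcal D^\cdot_{\overline{\boldsymbol\gamma}}$ (for a fixed representative $\boldsymbol\gamma$) as an increasing union of subcomplexes, each isomorphic to a complex of the form $\mathcal C^\cdot_{\boldsymbol\gamma'}$, and then to choose $\boldsymbol\gamma$ so that every inclusion in this union is a quasi-isomorphism. Proposition \ref{prop:bigdR} will then give the result.

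\textbf{Step 1: a twisting identity.} For $\mathbf k\in\mathbb Z^n$ and $h$ a section, formula (\ref{aln:d}) gives
$$d_{\boldsymbol\gamma}(\mathbf t^{-\mathbf k}h)=\mathbf t^{-\mathbf k}\,d_{\boldsymbol\gamma-\mathbf k}(h),$$
and likewise for $\nabla_{\partial_{x_j}}$. Hence multiplication by $\mathbf t^{-\mathbf k}$ identifies $\mathcal C^\cdot_{\boldsymbol\gamma-\mathbf k}$, as a complex of $\mathcal D_{\mathbb A^N}$-modules, with the subcomplex of $\mathcal D^\cdot_{\boldsymbol\gamma}$ whose coefficients lie in $\mathbf t^{-\mathbf k}\mathbb C[\delta\cap\mathbb Z^n]\otimes\mathbb C[\mathbf x]$.

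\textbf{Step 2: an exhaustion.} If $\delta\neq\mathbb R^n$, fix a lattice point $\mathbf u$ in the interior of $\delta$. (If $\delta=\mathbb R^n$ then $\mathcal C^\cdot=\mathcal D^\cdot$ and there is nothing to prove.) Then
$$\mathbb Z^n=\bigcup_{m\ge0}\big(-m\mathbf u+\delta\cap\mathbb Z^n\big),$$
so $\mathcal D^\cdot_{\boldsymbol\gamma}=\varinjlim_m \mathbf t^{-m\mathbf u}\mathcal C^\cdot_{\boldsymbol\gamma-m\mathbf u}$. Filtered colimits are exact, so it suffices to show that each inclusion
$$\mathbf t^{-m\mathbf u}\mathcal C^\cdot_{\boldsymbol\gamma-m\mathbf u}\subset\mathbf t^{-(m+1)\mathbf u}\mathcal C^\cdot_{\boldsymbol\gamma-(m+1)\mathbf u}$$
is a quasi-isomorphism. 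Equivalently, after untwisting, the quotient complex $Q^\cdot$ is acyclic. This quotient has coefficients spanned by monomials $\mathbf t^{\mathbf w}$ with $\mathbf w\in(-\mathbf u+\delta)\setminus\delta$.

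\textbf{Step 3: acyclicity of the quotient (main obstacle).} The monomials $\mathbf t^{\mathbf w}$ with $\mathbf w\in(-\mathbf u+\delta)\setminus\delta$ fail some facet inequality $\ell_\sigma\ge0$ of $\delta$. I would filter $Q^\cdot$ by the finitely many facets and an order on them, reducing to subquotients attached to a single facet $\sigma$ with primitive inward normal $\ell=\ell_\sigma$. On such a piece, use the contraction $\iota_\ell$ with the vector field $\sum_i\ell_i t_i\partial_{t_i}$. Cartan's formula gives
$$d\iota_\ell+\iota_\ell d=\sum_i\ell_i\Big(t_i\partial_{t_i}+\gamma_i+\sum_j w_{ij}x_j\mathbf t^{\mathbf w_j}\Big),$$
which acts on $\mathbf t^{\mathbf w}$ by
$$\mathbf t^{\mathbf w}\mapsto\big(\ell(\mathbf w)+\ell(\boldsymbol\gamma)\big)\mathbf t^{\mathbf w}+\sum_j\ell(\mathbf w_j)\,x_j\mathbf t^{\mathbf w+\mathbf w_j}.$$
Since $\ell(\mathbf w_j)\ge0$, with equality exactly for $\mathbf w_j\in\sigma$, the second sum strictly raises the $\ell$-grading. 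So on the associated graded for the $\ell$-filtration (which is finite on each piece, because $\ell(\mathbf w)$ ranges over a bounded set of integers there), the homotopy becomes the scalar $\ell(\mathbf w+\boldsymbol\gamma)$. Whenever this scalar is nonzero, the graded pieces are contractible and hence so is $Q^\cdot$. I expect the bookkeeping here to be the delicate part: ordering the facets so that each subquotient really is a complex supported on one facet's "bad" region, and checking that the filtrations are exhaustive and finite on $\mathcal O(\mathbb A^N)$-coefficients.

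\textbf{Step 4: choice of representative.} The required condition is that $\ell_\sigma(\mathbf w+\boldsymbol\gamma-m\mathbf u)\neq0$ for all facets $\sigma$, all $m\ge0$, and all relevant $\mathbf w$. In that range $\ell_\sigma(\mathbf w)<0$ is a bounded-below negative integer, and $\ell_\sigma(\mathbf u)>0$. It therefore suffices to replace $\boldsymbol\gamma$ by $\boldsymbol\gamma+\mathbf k$ with $\mathbf k\in\mathbb Z^n$ chosen so that, for every facet $\sigma$, either $\ell_\sigma(\boldsymbol\gamma)\notin\mathbb Z$ or $\mathrm{Re}\,\ell_\sigma(\boldsymbol\gamma+\mathbf k)$ is large enough. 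Taking $\mathbf k$ far inside $\delta$ achieves the second alternative simultaneously for all finitely many facets, since then all the relevant values of $\ell_\sigma(\mathbf w+\boldsymbol\gamma-m\mathbf u)$ stay away from $0$ for every $m$.
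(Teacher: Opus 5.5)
Steps 1--3 of your proposal are sound. Step 4, however, has the sign backwards, and as written it selects representatives for which the conclusion is false.

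\textbf{The error in Step 4.} On the piece attached to a facet $\sigma$ at stage $m$, the scalars are $\ell_\sigma(\boldsymbol\gamma)+\ell_\sigma(\mathbf w)-m\,\ell_\sigma(\mathbf u)$ with $\ell_\sigma(\mathbf w)\in\{-\ell_\sigma(\mathbf u),\ldots,-1\}$. As $m$ runs over $\mathbb Z_{\geq 0}$, these are exactly the numbers $\ell_\sigma(\boldsymbol\gamma)-j$ for $j=1,2,3,\ldots$. They tend to $-\infty$, so making $\mathrm{Re}\,\ell_\sigma(\boldsymbol\gamma+\mathbf k)$ large does not keep them away from $0$.

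The correct requirement is $\ell_\sigma(\boldsymbol\gamma)\notin\mathbb Z_{>0}$ for every facet $\sigma$. If $\ell_\sigma(\boldsymbol\gamma)\in\mathbb Z$, shifting by $\mathbf k$ deep inside $\delta$ turns it into a large positive integer, which is exactly the forbidden case.

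This is not just a defect of the argument. Take $n=N=1$, $\mathbf w_1=1$, $\gamma=1$. Then $\mathrm d(1)=(1+xt)\frac{\mathrm dt}{t}$, and $\nabla_{\partial_x}\bigl(x\frac{\mathrm dt}{t}\bigr)=(1+xt)\frac{\mathrm dt}{t}$. So the class of $x\frac{\mathrm dt}{t}$ is a nonzero horizontal section of $\mathcal H^1(\mathcal C^\cdot_1)$; in fact $\mathcal H^1(\mathcal C^\cdot_1)\cong\mathbb C[x,x^{-1}]$. By contrast, $\mathrm{Hyp}_{\overline 0}=\mathfrak F(R\iota_+\mathcal O_{\mathbb G_m})$ has no nonzero horizontal section. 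Indeed, $\partial_\xi\hat s=\widehat{xs}$, and multiplication by the coordinate is injective on $\mathbb C[x,x^{-1}]$. Hence $\mathcal C^\cdot_1[1]\not\cong\mathrm{Hyp}_{\overline 0}$.

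\textbf{The fix.} Take $\mathbf k$ deep inside $-\delta$, for example $\mathbf k=-K\mathbf u$ with $K\gg 0$. Then $\mathrm{Re}\,\ell_\sigma(\boldsymbol\gamma+\mathbf k)<1$ for all facets simultaneously, and the scalars never vanish.

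\textbf{Comparison with the paper.} With this correction your proof is complete, and it takes a genuinely different route. The paper proceeds in three steps:
\begin{enumerate}[(1)]
\item it resolves $X(\Sigma_0)=\mathrm{Spec}\,\mathbb C[\delta\cap\mathbb Z^n]$ by a regular refinement $\Sigma$;
\item it compares the logarithmic de Rham complex on $X(\Sigma)\times\mathbb A^N$ with the de Rham complex of $p_1^*j_+\mathcal K_{\overline{\boldsymbol\gamma}}\otimes\overline F^*\mathcal L$, using Deligne's pole-order filtration (Lemma \ref{prop:logdR});
\item it pushes down using the rational-singularity isomorphism $Rp_*\mathcal O_{X(\Sigma)}\cong\mathcal O_{X(\Sigma_0)}$.
\end{enumerate}
You instead work directly with monomials: you exhaust $\mathbb Z^n$ by translates of $\delta$ and kill each successive quotient using the facet filtration and the contraction homotopy $\iota_\ell$.

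The facet bookkeeping you were worried about does work. Monomials with $\ell_{\sigma_1},\ldots,\ell_{\sigma_{i-1}}\geq 0$ span a subcomplex, because adding $\mathbf w_j$ never lowers any $\ell_\sigma$. The terms with $\ell_\sigma(\mathbf w_j)=0$ drop out of the homotopy, since their coefficient is $\ell_\sigma(\mathbf w_j)$ itself.

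Your route avoids resolutions and rational singularities entirely. It also needs the nonresonance condition only for the facet normals of $\delta$ (the rays of $\Sigma_0$). This is weaker than the paper's condition (\ref{condition}), which is imposed on all rays of the refinement $\Sigma$.
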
 
 
In section 3, we prove the following. 

\begin{proposition}\label{prop:goodR} 
Let $U$ be a Zariski open subset of $\mathbb A^N$ so that for any rational point 
$\mathbf a=(a_1, \ldots,a_N)$ in $U$,  the Laurent polynomial
$F({\mathbf t}, \mathbf a)=\sum_{j=1}^N a_j \mathbf  t^{\mathbf w_j}$ 
is nondegenerate with respect to $\Delta_\infty$. Then $\mathcal H^i(\mathcal C^\cdot_{\boldsymbol\gamma})|_U=0$
for $i\not=n$, and $\mathcal H^n(\mathcal C^\cdot_{\boldsymbol\gamma})|_U$
is an integrable connection of rank 
$n!\mathrm{vol}(\Delta_\infty)$. 
\end{proposition}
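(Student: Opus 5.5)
We plan to prove Proposition \ref{prop:goodR} by the Adolphson--Sperber / Kouchnirenko method: filter $\mathcal C^\cdot_{\boldsymbol\gamma}$ by the Newton polyhedron, pass to the associated graded, and recognise the graded pieces as Koszul complexes of the partial derivatives of the face polynomials.

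Let $R=\mathbb C[\delta\cap\mathbb Z^n]$ and, on an affine open $V=\mathrm{Spec}\,B\subset U$, let $\mathcal R=R\otimes B$. Then $\Gamma(V,\mathcal C^\cdot_{\boldsymbol\gamma})$ is the Koszul-type complex on $\mathcal R$ with respect to the commuting operators
$$D_i=t_i\partial_{t_i}+\gamma_i+\sum_j w_{ij}x_j\mathbf t^{\mathbf w_j},\qquad i=1,\ldots,n,$$
acting on $\bigwedge^\cdot$ of the free module with basis $\mathrm dt_i/t_i$.

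\textbf{Step 1: the filtration.} Let $\rho$ be the Newton weight function of $\Delta_\infty$: $\rho$ is linear on each cone over a face not containing $0$, and $\rho(\mathbf w_j)\le 1$ for all $j$. Choose an integer $m$ so that $\rho$ takes values in $\frac1m\mathbb Z$ on $\delta\cap\mathbb Z^n$. Filter $\mathcal R$ by $F_k\mathcal R=\bigoplus_{\rho(\mathbf w)\le k/m}B\,\mathbf t^{\mathbf w}$, and shift the filtration on $q$-forms so that each $D_i$ raises the filtration by at most $m$. The terms $t_i\partial_{t_i}+\gamma_i$ preserve $\rho$, while multiplication by $\mathbf t^{\mathbf w_j}$ raises $\rho$ by at most $1$. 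Hence on the associated graded only the leading part survives, namely multiplication by $\sum_{\rho(\mathbf w_j)=1}w_{ij}x_j\mathbf t^{\mathbf w_j}=t_i\partial_{t_i}F_{\rm top}$.

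\textbf{Step 2: the graded complex.} The associated graded complex is the Koszul complex of $\mathrm{gr}\,\mathcal R$ (the graded ring in which $\mathbf t^{\mathbf u}\mathbf t^{\mathbf v}=0$ unless $\mathbf u,\mathbf v$ lie in a common cone) with respect to the elements $t_i\partial_{t_i}F$. This is the step we expect to be the main obstacle. Kouchnirenko's theorem, in Adolphson--Sperber's form over a base, says that at every point $\mathbf a\in U$ these elements form a regular sequence in $\mathrm{gr}\,R$. The nondegeneracy hypothesis is used exactly here: it says that on each face $\Gamma$ not containing $0$ the $t_i\partial_{t_i}F_\Gamma$ have no common zero on the torus. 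The argument proceeds through the Stanley--Reisner type exact sequence reducing to the face rings, which are Cohen--Macaulay by Hochster. Moreover, $\mathrm{gr}\,R/(t_i\partial_{t_i}F)$ has dimension $n!\,\mathrm{vol}(\Delta_\infty)$. To pass from points to the family, we use that $\mathrm{gr}\,\mathcal R$ is flat over $B$ and that the quotient has constant fibre dimension. The upshot is that the graded complex is exact except in degree $n$, and its $H^n$ is a locally free $B$-module of rank $n!\,\mathrm{vol}(\Delta_\infty)$.

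\textbf{Step 3: from graded to filtered.} The filtration is exhaustive and bounded below, so the spectral sequence of the filtered complex converges. Since the graded complex has cohomology only in degree $n$, the filtered complex $\mathcal C^\cdot_{\boldsymbol\gamma}$ also has $\mathcal H^i=0$ for $i\ne n$. Moreover, $\mathcal H^n$ has a filtration whose graded module is $H^n(\mathrm{gr})$, which is locally free of finite rank. Hence $\mathcal H^n$ is locally free over $\mathcal O_U$ of rank $n!\,\mathrm{vol}(\Delta_\infty)$; here we use that $H^n(\mathrm{gr})$ is finite over $B$, so that only finitely many filtration steps contribute.

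\textbf{Step 4: integrability.} $\mathcal H^n$ is a $\mathcal D_U$-module via $\nabla_{\partial_{x_j}}$, because these operators commute with the $D_i$. A $\mathcal D_U$-module that is coherent and locally free over $\mathcal O_U$ is an integrable connection. This completes the argument.
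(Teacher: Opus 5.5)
Your proposal is correct and has the same skeleton as the paper. Both use the Newton filtration shifted by $M$ per form degree, identify the associated graded complex with the Koszul complex of $\mathrm{Gr}(\mathcal R)$ with respect to the $t_i\partial f/\partial t_i$, and invoke Kouchnirenko's face-ring resolution with Hochster's theorem and nondegeneracy. Both finish with the filtered spectral sequence, which degenerates because the graded cohomology sits in degree $n$.

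Two steps are organized differently.

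\emph{Regularity in the graded ring.} You use that the $t_i\partial f/\partial t_i$ form a regular sequence in the whole graded ring. That requires $\mathrm{Gr}(\mathcal R)$ to be Cohen--Macaulay, which is true by a depth count along the resolution $A^\cdot$. The paper avoids this by running the bicomplex spectral sequence $H^q(K^\cdot(A^p))\Rightarrow H^{p+q}(K^\cdot(R))$ with $R=\mathrm{Gr}(\mathcal R)$. So it needs regularity only face by face (Lemma~\ref{lm:facial}), and it obtains $n!\mathrm{vol}(\Delta_\infty)$ from the Poincar\'e series $P_R(t)(1-t^M)^n$.

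\emph{Passing from points to the family.} This is the more substantial difference. You prove everything fibrewise and then spread out. The paper instead works directly over $\mathcal K=\mathcal O_V(V)$: $R_\Gamma$ is Cohen--Macaulay over the base, and adjoining $x_1-a_1,\ldots,x_N-a_N$ gives a system of parameters at the vertex over $\mathbf a$. This yields only finite generation of $H^n$. Local freeness then comes from the $\mathcal D$-module structure (a coherent $\mathcal O$-module with an integrable connection is locally free). The rank is read off at one fibre using right exactness of $i_{\mathbf a}^*$ on top cohomology.

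Your route gives local freeness of $\mathcal H^n$ without appealing to the connection. The cost is that the spreading-out step must be made precise, and ``flat plus constant fibre dimension'' is too loose as stated. The clean way is to work one $\rho$-degree at a time:
\begin{enumerate}[(i)]
\item Each graded piece of the graded Koszul complex is a bounded complex of finite free $B$-modules whose fibres at all closed points are exact below degree $n$.
\item Hence over each $B_{\mathfrak m}$ the differentials are successively split injective. This gives exactness below $n$ and freeness of $H^n$ in that degree.
\item The fibre Poincar\'e polynomial $P_R(t)(1-t^M)^n$ is independent of $\mathbf a$. This bounds the nonzero degrees uniformly, so $H^n$ of the graded complex is finite over $B$, and it gives the rank.
\end{enumerate}
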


Note that Theorem \ref{thm:holonomic} (ii) follows from Propositions \ref{prop:smalldR} and \ref{prop:goodR}. 

\subsection{The better behaved GKZ system}
In \cite[Definition 2.1]{BP}, Borisov and Horja introduced a better behaved GKZ system. It is 
the following system of partial differential equations 
for family of function $\{\Phi_{\mathbf v}(x_1,\ldots, x_N): \mathbf v=(v_1,\ldots, v_n)\in \delta\cap \mathbb Z^n\}$:
\begin{eqnarray}\label{betterGKZeqn1}
\frac{\partial \Phi_{\mathbf v}}{\partial x_j}=\Phi_{\mathbf v+\mathbf w_j},
\end{eqnarray}
\begin{eqnarray}\label{betterGKZeqn2}
\sum_{j=1}^N w_{ij} x_j\frac{\partial \Phi_{\mathbf v}}{\partial x_j}+(\gamma_i+v_i) \Phi_{\mathbf v}=0 \quad (i=1,\ldots, n).
\end{eqnarray}
Suppose $\{\Phi_{\mathbf v}\}_{\mathbf v\in\delta\cap\mathbb Z^n}$ 
is a solution of the better behaved GKZ system. We claim that each $\Phi_{\mathbf v}$ 
is a solution of the $A$-hypergeometric $\mathcal D$-module $\mathcal H_{\boldsymbol \gamma+\mathbf v}$.
Indeed, (\ref{betterGKZeqn2}) is the equation of (\ref{GKZeqn1}) with the parameter $\boldsymbol\gamma$ replaced by 
$\boldsymbol \gamma+\mathbf v$. Suppose we have an integral linear relation
$$\lambda_1{\mathbf w}_1+\cdots + \lambda_N\mathbf w_N=0.$$ 
Then we have 
\begin{eqnarray*}
\Big(\prod_{\lambda_j>0} \Big(\frac{\partial}{\partial x_j}\Big)^{\lambda_j}\Big)\Phi_{\mathbf v}
&=&\Phi_{\mathbf v+\sum_{\lambda_j>0} \lambda_j \mathbf w_j},\\
\Big(\prod_{\lambda_j<0} \left(\frac{\partial}{\partial x_j}\right)^{-\lambda_j}\Big)\Phi_{\mathbf v}
&=&\Phi_{\mathbf v-\sum_{\lambda_j< 0} \lambda_j \mathbf w_j}.
\end{eqnarray*}
Hence $\Phi_{\mathbf v}$ also satisfies (\ref{GKZeqn2}). This proves our claim.
If $\Phi$ is a solution of $\mathcal H_{\boldsymbol\gamma+\mathbf v}$, 
then $\frac{\partial\Phi}{\partial x_k}$ is a solution of $\mathcal H_{\boldsymbol\gamma+\mathbf v+\mathbf w_k}$. 
This follows from the equation
$$\Big(\sum_{j=1}^n w_{ij} x_j\partial_{x_j}+\gamma_i+v_i+w_{ik}\Big)\partial_{x_k}
=\partial_{x_k} \Big(\sum_{j=1}^n w_{ij} x_j\partial_{x_j}+\gamma_i+v_i\Big).$$
We can construct a direct system
$$(\mathcal H_{\boldsymbol\gamma+\mathbf v}, \partial_{x_1}, \ldots, \partial_{x_N})_{\mathbf v\in \delta\cap\mathbb Z^n}$$
such that the index set is $\delta\cap\mathbb Z^n$ with the partial order 
$$\mathbf v^{(1)}\leq \mathbf v^{(2)}\Leftrightarrow \mathbf v^{(1)}-\mathbf v^{(2)}=a_1\mathbf w_1+\cdots+
a_N\mathbf w_N\hbox{ for some nonnegative integers } a_j,$$
and the transition morphisms are composites of 
$$\cdot\partial_{x_k}: \mathcal D_{\mathbb A^N}\Big/\mathcal D_{\mathbb A^N}
\Big(\sum_{j=1}^N w_{ij} x_j\partial_{x_j}+\gamma_i+v_i+w_{ik},  \square_{\boldsymbol\lambda}\Big)\to 
\mathcal D_{\mathbb A^N}\Big/\mathcal D_{\mathbb A^N}
\Big(\sum_{j=1}^N w_{ij} x_j\partial_{x_j}+\gamma_i+v_i,  \square_{\boldsymbol \lambda}\Big)$$ 
induced by the right multiplication by $\partial_{x_k}$: 
$$\cdot \partial_{x_k}: \mathcal D_{\mathbb A^N}\to \mathcal D_{\mathbb A^N},\quad P\mapsto P\partial_{x_k}.$$
The $\mathcal D$-module underlying the better behaved GKZ system is 
$$\varinjlim_{\mathbf v\in  \delta\cap\mathbb Z^n} \mathcal H_{\boldsymbol\gamma+\mathbf v}.$$
Solutions of this $\mathcal D$-module are the solutions of the better behaved GKZ system.
One needs to be careful with the direct limit since $(\delta\cap\mathbb Z^n, \leq)$ is not a direct set. 
We prove the following in Section 2.

\begin{proposition}\label{prop:betterGKZ} We have an isomorphism
$\varinjlim_{\mathbf v\in  \delta\cap\mathbb Z^n} \mathcal H_{\boldsymbol\gamma+\mathbf v}\cong 
\mathcal H^n(\mathcal C_{\boldsymbol\gamma}).$
\end{proposition}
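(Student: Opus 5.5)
The plan is to build explicit, mutually inverse $\mathcal D_{\mathbb A^N}$-linear maps between $\varinjlim_{\mathbf v}\mathcal H_{\boldsymbol\gamma+\mathbf v}$ and $\mathcal H^n(\mathcal C^\cdot_{\boldsymbol\gamma})$.

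\emph{Setting up both sides.} Since $\mathcal C^{n+1}_{\boldsymbol\gamma}=0$, we have $\mathcal H^n(\mathcal C^\cdot_{\boldsymbol\gamma})=\mathrm{coker}(d_{n-1})$. The module $\mathcal C^n_{\boldsymbol\gamma}$ is a free $\mathcal O_{\mathbb A^N}$-module with basis $\mathbf t^{\mathbf v}\frac{\mathrm dt_1}{t_1}\wedge\cdots\wedge\frac{\mathrm dt_n}{t_n}$, for $\mathbf v\in\delta\cap\mathbb Z^n$. By (\ref{aln:d}), the image of $d_{n-1}$ is the $\mathcal O_{\mathbb A^N}$-span of the elements
$$\Big((\gamma_i+v_i)\mathbf t^{\mathbf v}+\sum_j w_{ij}x_j\mathbf t^{\mathbf v+\mathbf w_j}\Big)\frac{\mathrm dt_1}{t_1}\wedge\cdots\wedge\frac{\mathrm dt_n}{t_n},\qquad i=1,\ldots,n,\ \mathbf v\in\delta\cap\mathbb Z^n.$$
This uses that $d_{n-1}$ of $h\,\mathbf t^{\mathbf v}$ times the form omitting $\mathrm dt_i/t_i$ is $\pm$ that element multiplied by $h$, plus $t_i\partial_{t_i}h$-terms. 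Since $h=h(\mathbf x)$, the latter vanish. By (\ref{aln:nabla}), $\partial_{x_j}$ acts on $h(\mathbf x)\mathbf t^{\mathbf v}$ by $h'\mathbf t^{\mathbf v}+h\,\mathbf t^{\mathbf v+\mathbf w_j}$.

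I interpret the direct limit as the categorical colimit over the poset $(\delta\cap\mathbb Z^n,\le)$. Different chains of transition maps between two indices agree, because the $\partial_{x_k}$ commute. This is exactly how the lack of directedness is handled.

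\emph{Map $\Phi$ from the limit.} Define $\Phi_{\mathbf v}:\mathcal H_{\boldsymbol\gamma+\mathbf v}\to\mathcal H^n(\mathcal C^\cdot_{\boldsymbol\gamma})$ by $P\mapsto P\cdot[\mathbf t^{\mathbf v}\,\mathrm dt/t]$. It is well defined for two reasons:
\begin{itemize}
\item For the box operators, $\prod_{\lambda_j>0}\partial_{x_j}^{\lambda_j}$ sends $\mathbf t^{\mathbf v}$ to $\mathbf t^{\mathbf v+\sum_{\lambda_j>0}\lambda_j\mathbf w_j}$. The same monomial is obtained from the negative part, because $\sum\lambda_j\mathbf w_j=0$. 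Hence $\square_{\boldsymbol\lambda}$ kills $\mathbf t^{\mathbf v}$ already in $\mathcal C^n_{\boldsymbol\gamma}$.
\item The Euler operator $\sum_j w_{ij}x_j\partial_{x_j}+\gamma_i+v_i$ sends $\mathbf t^{\mathbf v}$ to exactly the displayed generator of $\mathrm{im}(d_{n-1})$, which is zero in cohomology.
\end{itemize}
Compatibility with the transition maps $\cdot\partial_{x_k}$ is the identity $\partial_{x_k}\mathbf t^{\mathbf v}=\mathbf t^{\mathbf v+\mathbf w_k}$. So the $\Phi_{\mathbf v}$ glue to $\Phi$ on the colimit.

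\emph{Inverse $\Psi$.} Write $[1_{\mathbf v}]$ for the image in the colimit of $1\in\mathcal H_{\boldsymbol\gamma+\mathbf v}$. Define $\Psi$ on $\mathcal C^n_{\boldsymbol\gamma}$ $\mathcal O_{\mathbb A^N}$-linearly by $h\,\mathbf t^{\mathbf v}\,\mathrm dt/t\mapsto h[1_{\mathbf v}]$; this is legitimate because the module is free.

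In the colimit we have $\partial_{x_j}[1_{\mathbf v}]=[1_{\mathbf v+\mathbf w_j}]$, since both equal the image of $\partial_{x_j}\in\mathcal H_{\boldsymbol\gamma+\mathbf v}$. Using this, two checks follow.
\begin{itemize}
\item $\Psi$ kills each generator of $\mathrm{im}(d_{n-1})$: that generator maps to $(\gamma_i+v_i+\sum_j w_{ij}x_j\partial_{x_j})[1_{\mathbf v}]$, which is $0$ by the Euler relation in $\mathcal H_{\boldsymbol\gamma+\mathbf v}$.
\item $\Psi$ is $\mathcal D$-linear, by comparing the two sides of the $\partial_{x_j}$-formula above.
\end{itemize}

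\emph{Conclusion and main obstacle.} $\Psi\circ\Phi$ is the identity on each $[1_{\mathbf v}]$. These classes generate the colimit as an $\mathcal O_{\mathbb A^N}$-module, since every element is an $\mathcal O$-combination of $\partial^{\mathbf m}[1_{\mathbf v}]=[1_{\mathbf v+\sum m_j\mathbf w_j}]$. Likewise $\Phi\circ\Psi$ is the identity on the $\mathcal O$-generators $[\mathbf t^{\mathbf v}\,\mathrm dt/t]$ of $\mathcal H^n$. Hence both composites are the identity.

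The main obstacle is the identification of $\mathrm{im}(d_{n-1})$: one must carefully verify, with signs, that it is generated over $\mathcal O_{\mathbb A^N}$ exactly by the displayed elements. Equally, one must make sure the colimit over the non-directed poset is taken categorically, so that $[1_{\mathbf v}]$ and the relation $\partial_{x_j}[1_{\mathbf v}]=[1_{\mathbf v+\mathbf w_j}]$ are unambiguous.
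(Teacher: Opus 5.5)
Your proof is correct, but it is organized differently from the paper's.

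\textbf{The paper's route.} The paper does not build an inverse by hand. It quotes Adolphson's isomorphism $\mathcal H_{\boldsymbol\gamma+\mathbf v}\cong\mathcal H^n(\mathcal E^\cdot_{\boldsymbol\gamma+\mathbf v})$ for each $\mathbf v$ and lifts $\cdot\partial_{x_k}$ to the chain maps $\mathbf t^{\mathbf w_k}$. It then commutes the colimit with the cokernel of $d^{n-1}$. Finally it identifies $\varinjlim_{\mathbf v}\mathcal E^\cdot_{\boldsymbol\gamma+\mathbf v}$ with $\mathcal C^\cdot_{\boldsymbol\gamma}$, using the embeddings $\mathbf t^{\mathbf v}$ and $\delta\cap\mathbb Z^n=\bigcup_{\mathbf v}(\mathbf v+C(A))$.

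That route reuses a known theorem and gives an isomorphism of whole complexes, not just of top cohomology. However, it needs the map from the non-filtered colimit of the $\mathcal E^\cdot_{\boldsymbol\gamma+\mathbf v}$ into $\mathcal C^\cdot_{\boldsymbol\gamma}$ to be injective, which is not purely formal. It holds because, for each lattice point $\mathbf u$, every index $\mathbf v$ with $\mathbf u\in\mathbf v+C(A)$ receives a transition map from the index $\mathbf u$, so all copies of $\mathbf t^{\mathbf u}$ get identified.

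\textbf{Your route.} You work only with $\mathcal H^n$, the argument is self-contained, and it avoids the injectivity question, because $\Psi$ is defined basis element by basis element on the free $\mathcal O_{\mathbb A^N}$-module $\mathcal C^n_{\boldsymbol\gamma}$. In effect you reprove Adolphson's lemma in the limit. There it is easier: each lattice point of $\delta$ has its own generator $[1_{\mathbf v}]$, so you never need $\mathbb C[\partial_{x_1},\ldots,\partial_{x_N}]/(\square_{\boldsymbol\lambda})\cong\mathbb C[C(A)]$. The box operators enter only in checking that $\Phi_{\mathbf v}$ is well defined.

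\textbf{A correction to your side remark on the non-directed index set.} Write $\partial^{\mathbf a}=\prod_j\partial_{x_j}^{a_j}$ for $\mathbf a\in\mathbb Z_{\geq 0}^N$. Commutativity of the $\partial_{x_k}$ only shows that a chain of transition maps depends on its exponent vector $\mathbf a$, not on the order of the steps.
\begin{itemize}
\item Two vectors $\mathbf a\ne\mathbf a'$ with $\sum_j a_j\mathbf w_j=\sum_j a'_j\mathbf w_j$ give the same map for a different reason. We have $P\partial^{\mathbf a}-P\partial^{\mathbf a'}=P\partial^{\mathbf b}\square_{\mathbf a-\mathbf a'}$ with $\mathbf b=\min(\mathbf a,\mathbf a')$ componentwise, and this lies in the left ideal.
\item The same identity shows that loops, which occur when $C(A)$ has nonzero units, act as the identity.
\end{itemize}
Your proof does not actually depend on this point. $\Phi$ and $\Psi$ only use the relations attached to single arrows $\cdot\partial_{x_k}$, so the argument works verbatim for the colimit of the diagram generated by those arrows.
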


\begin{remark}
The stable GKZ hypergeometric $\mathcal D$-module already appears implicitly in \cite[Section 4]{GKZ2}.
Suppose $\mathbf w_1,\ldots, \mathbf w_N$ generates $\mathbb Z^n$
and $0$ is the only unit in the semigroup generated by $\mathbf w_1,\ldots, \mathbf w_N$. 
Schulze and Walther \cite[Corollary 3.8]{SW} prove that the $A$-hypergeometric $\mathcal D$-module 
$\mathcal H_{\boldsymbol\gamma}$ is isomorphic to the stable GKZ hypergeometric 
$\mathcal D$-module $\mathrm{Hyp}_{\overline{\boldsymbol\gamma}}$ if and only if 
$\boldsymbol\gamma$ is not a strongly resonant parameter (\cite[Definition 3.5]{SW}).
For any $\overline{\boldsymbol\gamma}$, we can always choose a representative 
$\boldsymbol\gamma$ of $\overline{\boldsymbol\gamma}$ which is not strongly resonant.
Thus the stable GKZ hypergeometric 
$\mathcal D$-module $\mathrm{Hyp}_{\overline{\boldsymbol\gamma}}$
is isomorphic to an $A$-hypergeometric $\mathcal D$-module 
for a properly chosen $\boldsymbol\gamma$. 
Moreover, by Propositions \ref{prop:smalldR} and \ref{prop:betterGKZ}, we can write the stable GKZ hypergeometric 
$\mathcal D$-module $\mathrm{Hyp}_{\overline{\boldsymbol\gamma}}$ as the direct limit 
of the $A$-hypergeometric $\mathcal D$-modules $\mathcal H_{\boldsymbol\gamma+\mathbf v}$ ($\mathbf v\in
\delta\cap\mathbb Z^n$). The idea of looking at families of $A$-hypergeometric $\mathcal D$-modules 
with respect to the contiguity operators $\partial_{x_j}$ first appears in \cite[Remark 3.8]{W}. 
For this reason, we call the $\mathcal D$-module $\mathrm{Hyp}_{\overline{\boldsymbol\gamma}}$
the stable  GKZ hypergeometric $\mathcal D$-module.
If  $\mathbf w_1,\ldots, \mathbf w_N$ generates $\mathbb Z^n$
and $0$ is the only unit in the semigroup generated by $\mathbf w_1,\ldots, \mathbf w_N$, 
we can deduce Theorem \ref{thm:holonomic} from \cite[Corollary 3.8]{SW} and 
the known properties of the $A$-hypergeometric $\mathcal D$-module.
In this paper, we give a direct proof of Theorem \ref{thm:holonomic} in general
using the six-functor formalism in the theory of algebraic $\mathcal D$-modules, and the classical methods of 
Deligne \cite{D} and of Kouchnirenko \cite{K}. 
\end{remark}

\begin{remark}\label{rm:hypmotive} Unlike the $A$-hypergeometric $\mathcal D$-module, the stable GKZ hypergeometric 
$\mathcal D$-module $\mathrm{Hyp}_{\overline{\boldsymbol\gamma}}$
has the expected rank $n!\mathrm{vol}(\Delta_\infty)$ for all $\boldsymbol\gamma$. 
Our study of the stable GKZ hypergeometric $\mathcal D$-module is motivated by that of 
the $\ell$-adic GKZ hypergeometric sheaf introduced in \cite{F}. 
Let $p$ and $\ell$ be two distinct prime numbers, 
$$\pi_1: \mathbb G^n_{m,\mathbb F_p}\times_{\mathbb F_p}\mathbb A^N_{\mathbb F_p} 
\to \mathbb G^n_{m,\mathbb F_p},\quad \pi_2: \mathbb G^n_{m,\mathbb F_p}\times_{\mathbb F_p} \mathbb A_{\mathbb F_p}^N 
\to \mathbb A_{\mathbb F_p}^N, \quad F:\mathbb G^n_{m,\mathbb F_p} \times_{\mathbf F_p} \mathbb A^N_{\mathbb F_p}\to
\mathbb A_{\mathbb F_p}^1$$
the projections and the morphism defined by the Laurent polynomial $F$ 
over the finite ground field $\mathbb F_p=\mathbb Z/p$, 
$\psi: \mathbb F_p\to \overline{\mathbb Q}^*_\ell$
the additive character $\psi(t)=e^{\frac{2\pi i t}{p}}$,  $\chi_1, \ldots, \chi_n:\mathbb F_p^*\to\overline{\mathbb Q}^*_\ell$
multiplicative characters, $\boldsymbol\chi$ the character
$$\boldsymbol\chi: (\mathbb F_p^*)^n\to \overline{\mathbb Q}^*_\ell, \quad (t_1, \ldots, t_n)\mapsto \chi_1(t_1)\cdots\chi_n(t_n),$$ and
$\mathcal K_{\boldsymbol\chi}$ (resp. $\mathcal L_\psi$) the Kummer sheaf (resp. Artin-Schreier sheaf) on 
$\mathbb G^n_{m,\mathbb F_p}$ (resp. $\mathbb A^1_{\mathbb F_p}$). In \cite{F}, 
we define the $\ell$-adic GKZ hypergeometric sheaf to be 
$$\mathrm{Hyp}_{\boldsymbol\chi}=R\pi_{2!} (\pi_1^*\mathcal K_{\boldsymbol\chi}\otimes F^*\mathcal L_\psi)[n+N].$$
By the Grothendieck trace formula, for any rational point $\mathbf x=(x_1,\ldots, x_N)$ of $\mathbb A^N_{\mathbb F_p}$, we have 
\begin{eqnarray}\label{eqn:twistexpsum}
\mathrm{Tr}(\mathrm{Frob}_{\mathbf x}, \mathrm{Hyp}_{\boldsymbol\chi,\overline{\mathbf x}})
=(-1)^{n+N}\sum_{t_1,\ldots, t_n\in\mathbb F_p^*}\chi_1(t_1)\cdots\chi_n(t_n)e^{\frac{2\pi i}{p}\sum_{j=1}^N x_j t_1^{w_{1j}}\cdots
t_n^{w_{nj}}},
\end{eqnarray}
where $\mathrm{Frob}_{\mathbf x}$ is the Frobenius element at $\mathbf x$. The twisted exponential sum on the 
righthand side of (\ref{eqn:twistexpsum}) is an arithmetic analogue of the integral (\ref{eqn:gkzint}). It is the hypergeometric 
function over the finite field introduced by Gelfand and Graev  \cite{GG}. In \cite{F}, we prove the $\ell$-adic GKZ hypergeometric sheaf
$\mathrm{Hyp}_{\boldsymbol\chi}$ is a perverse sheaf on $\mathbb A^N$, and it comes from a lisse $\ell$-adic sheaf 
of rank $n!\mathrm{vol}(\Delta_\infty)$ when restricted 
to the open subset parametrizing nondegenerate Laurent polynomials. In \cite{FZ}, we introduce the $p$-adic GKZ hypergeometric 
complex. In the derived category of arithmetic $\mathcal D^\dagger$-modules, it is exactly
$R\pi_{2+}(\pi_1^*\mathcal K_{\boldsymbol\gamma}\otimes F^* \mathcal L_\psi)$,
where $\mathcal L_\psi$ is the Dwork isocrystal  associated to $\psi$ and $\mathcal K_{\boldsymbol\gamma}$ is the Kummer
isocrystal associated to an $n$-tuple $\boldsymbol\gamma$ of rational numbers. We prove
it is overholonomic and comes from an $F$-isocrystal of rank 
$n!\mathrm{vol}(\Delta_\infty)$ when restricted 
to the open subset parametrizing nondegenerate Laurent polynomials. We expect that there exists a category of exponential motives 
and a GKZ hypergeometric motive in this category so that the stable GKZ hypergeometric $\mathcal D$-module, the $\ell$-adic GKZ
hypergeometric sheaf, and the $p$-adic GKZ hypergeometric complex are the de Rham, the $\ell$-adic, and the $p$-adic realizations 
of the GKZ hypergeometric motive, respectively.
\end{remark}

The paper is organized as follows. In Section 1, we describe the stable GKZ hypergeometric $\mathcal D$-module 
$\mathrm{Hyp}_{\overline{\boldsymbol\gamma}}$  in terms of the
Fourier transformation and prove it is holonomic. In Section 2, we prove  
Proposition \ref{prop:smalldR} expressing $\mathrm{Hyp}_{\overline{\boldsymbol\gamma}}$
by the de Rham complex $C_{\boldsymbol\gamma}^\cdot$. 
We also Proposition \ref{prop:betterGKZ} which relates
$C_{\boldsymbol\gamma}^\cdot$ with the better behaved GKZ system of Borisov-Horja.
In Section 3, we prove Proposition \ref{prop:goodR}. 

\subsection{Acknowledgement} I thank Lev Borisov, Jiangxue Fang, Paul Horja, and Hao Zhang 
for inspiring discussions. I am especially grateful to Uli Walther for many suggestions and detailed explanations about the
GKZ hypergeometric system. This research is supported by 2023YFA1009703.

\section{Fourier transform and the stable GKZ hypergeometric $\mathcal D$-module}

Let $\mathcal M$ be a quasi-coherent left $\mathcal D_{\mathbb A^N}$-module, 
let $\mathbb A^{*N}$ be the dual of $\mathbb A^N$, and let 
$$D=\Gamma(\mathbb A^N,\mathcal D_{\mathbb A^N}),
\quad D'=\Gamma(\mathbb A^{*N},\mathcal D_{\mathbb A^{*N}}),\quad M=\Gamma(\mathbb A^N,\mathcal M).$$
Choose a linear coordinate $(x_1, \ldots, x_N)$ on $\mathbb A^N$, and let  
$(\xi_1, \ldots, \xi_N)$ be the dual coordinate on $\mathbb A^{*N}$. 
The \emph{Fourier transform} $\widehat M$ of $M$ is defined by the following conditions:
\begin{enumerate}[(1)]
\item $\widehat M$ is a left $D'$-module.
\item As a vector space, $\widehat M$ coincides with $M$. 
\item For any $s\in M$, let $\hat s$
be the element in $\widehat M$ corresponding to $s$. We have
$$\xi_j \hat s= -\widehat{(\partial_{x_j}s)},\quad \partial_{\xi_j}\hat s=\widehat{(x_j s)}.$$
\end{enumerate}
The \emph{Fourier transform} $\widehat {\mathcal M}$ of $\mathcal M$ is 
the left $\mathcal D_{\mathbb A^{*N}}$-module so that 
$$\Gamma(\mathbb A^{*N},\widehat {\mathcal M})\cong \widehat M.$$
If $\mathcal M$ is a coherent (resp. holonomic) $\mathcal D_{\mathbb A^N}$-module, then 
$\widehat{\mathcal M}$ is coherent (resp. holonomic). The Fourier transform is an exact functor and it can be extended to a functor
$$\mathfrak F: D_{qc}^b(\mathcal D_{\mathbb A^N})\to D_{qc}^b(\mathcal D_{\mathbb A^{*N}})$$
on the derived categories of bounded complexes of $\mathcal D$-modules with quasi-coherent cohomologies. 

On a smooth variety $X$, recall that for any $K_1, K_2\in\mathrm{ob}\,D^b_{qc}(\mathcal D_X)$, we have
$$K_1\otimes_{\mathcal O_X}^L K_2=L\Delta^*(K_1\boxtimes K_2),$$
where $\Delta: X\to X\times X$ is the diagonal morphism. 
We define 
$$K_1\otimes_{\mathcal O_X}^! K_2=R\Delta^!(K_1\boxtimes K_2)\cong K_1\otimes_{\mathcal O_X}^L K_2[-\mathrm{dim}\,X].$$ 
Let $p_1: \mathbb A^N\times \mathbb A^{*N}\to  \mathbb A^N$ and 
$p_2: \mathbb A^N\times \mathbb A^{*N}\to  \mathbb A^{*N}$ be projections, and let 
$$\langle\, ,\,\rangle:  \mathbb A^N\times \mathbb A^{*N}\to \mathbb A^1, 
\quad ((x_1, \ldots, x_N), (\xi_1, \ldots, \xi_N))\mapsto x_1\xi_1+\cdots + x_N\xi_N$$ be the 
duality pairing. 
By \cite[7.1.4]{KL}, we have an isomorphism of functors 
$$\mathfrak F(-)\cong  Rp_{2+} (Rp_1^!(-)\otimes^!_{\mathcal O_{\mathbb A^N\times\mathbb A^{*N}}} 
R\langle\, ,\,\rangle^!\mathcal L)[1-N].$$

\begin{proposition}\label{prop:FourGKZ} Let $\iota$ be the morphism 
$$\iota: \mathbb G_m^n\to\mathbb A^N,\quad \mathbf t\mapsto (\mathbf t^{\mathbf w_1},\ldots, 
\mathbf t^{\mathbf w_N}).$$ Then $\iota$ is quasi-finite and affine, and we have 
$$\mathrm{Hyp}_{\overline{\boldsymbol\gamma}}=\mathfrak F(R\iota_+\mathcal K_{\overline{\boldsymbol\gamma}}).$$ 
Moreover, $\mathrm{Hyp}_{\overline{\boldsymbol\gamma}}$ is 
a holonomic $\mathcal D_{\mathbb A^N}$-module.
\end{proposition}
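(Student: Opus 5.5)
The proof has three parts: the geometric properties of $\iota$, the identification $\mathrm{Hyp}_{\overline{\boldsymbol\gamma}}=\mathfrak F(R\iota_+\mathcal K_{\overline{\boldsymbol\gamma}})$ by base change and the projection formula, and holonomicity via exactness of $\mathfrak F$.

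\emph{Properties of $\iota$.} The map $\iota$ factors as the group homomorphism
$$\mathbb G_m^n\to\mathbb G_m^N,\quad \mathbf t\mapsto(\mathbf t^{\mathbf w_1},\ldots,\mathbf t^{\mathbf w_N}),$$
followed by the open immersion $\mathbb G_m^N\hookrightarrow\mathbb A^N$. The immersion is affine, since it is the complement of the divisor $x_1\cdots x_N=0$. The homomorphism is a morphism of affine schemes, so $\iota$ is affine. Because $A$ has rank $n$, the homomorphism has finite kernel: it is the Cartier dual of $\mathbb Z^N\to\mathbb Z^n$, whose cokernel $\mathbb Z^n/M'$ is finite. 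Hence the homomorphism is a finite morphism onto a closed subtorus of $\mathbb G_m^N$, and $\iota$ is quasi-finite.

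\emph{Identification with the Fourier transform.} I would use the formula
$$\mathfrak F(-)\cong Rp_{2+}\bigl(Rp_1^!(-)\otimes^! R\langle\,,\,\rangle^!\mathcal L\bigr)[1-N],$$
with the roles of $\mathbb A^N$ and $\mathbb A^{*N}$ arranged so that the output lives on the $\mathbb A^N$ carrying the coordinate $\mathbf x$.
\begin{enumerate}[(1)]
\item Apply base change to the cartesian square formed by $\iota\times\mathrm{id}:\mathbb G_m^n\times\mathbb A^N\to\mathbb A^{N}\times\mathbb A^N$ and the projection to the first factor. This gives $Rp_1^!R\iota_+\mathcal K\cong R(\iota\times\mathrm{id})_+R\pi_1^!\mathcal K$.
\item Apply the projection formula for $\otimes^!$ along $\iota\times\mathrm{id}$ to move $R\langle\,,\,\rangle^!\mathcal L$ inside. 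The key observation is that $\langle\,,\,\rangle\circ(\iota\times\mathrm{id})=F$, since $\sum_j\mathbf t^{\mathbf w_j}x_j=F(\mathbf t,\mathbf x)$.
\item Use $p_2\circ(\iota\times\mathrm{id})=\pi_2$. Steps (1)--(3) together give
$$\mathfrak F(R\iota_+\mathcal K)\cong R\pi_{2+}\bigl(R\pi_1^!\mathcal K\otimes^! RF^!\mathcal L\bigr)[1-N].$$
\item Convert to $*$-pullbacks. For smooth morphisms and integrable connections, $R\pi_1^!=\pi_1^*[N]$ and $RF^!\mathcal L=F^*\mathcal L[n+N-1]$. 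Also $\otimes^!=\otimes^L[-(n+N)]$, and $\otimes^L$ reduces to the ordinary $\otimes_{\mathcal O}$ for connections. The total shift is $N+(n+N-1)-(n+N)+(1-N)=0$, so the right side is exactly $\mathrm{Hyp}_{\overline{\boldsymbol\gamma}}$.
\end{enumerate}
The main obstacle is this shift bookkeeping, together with sign and variable conventions: making sure the pairing matches $F$ rather than $-F$, and checking that the cited formula of \cite{KL} agrees with the module-theoretic definition of Fourier transform given above. If the conventions produce $e^{-F}$, I would compose with the automorphism $\mathbf x\mapsto-\mathbf x$. This is harmless for holonomicity, and it can be absorbed by replacing $\iota$ with $-\iota$, which has the same properties.

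\emph{Holonomicity.} Since $\iota$ is affine and quasi-finite, and $\mathcal K_{\overline{\boldsymbol\gamma}}$ is an integrable connection (hence holonomic), $R\iota_+\mathcal K_{\overline{\boldsymbol\gamma}}$ is concentrated in degree $0$. This uses the standard fact that direct images along affine quasi-finite morphisms are exact on holonomic modules; concretely, one factors $\iota$ as a finite map followed by an affine open immersion. Its cohomology is holonomic by \cite[Theorem 3.2.3]{Hotta}. The Fourier transform $\mathfrak F$ is induced by the exact functor $M\mapsto\widehat M$ and preserves holonomicity. Hence $\mathfrak F(R\iota_+\mathcal K_{\overline{\boldsymbol\gamma}})$ is a single holonomic module in degree $0$, and therefore so is $\mathrm{Hyp}_{\overline{\boldsymbol\gamma}}$.
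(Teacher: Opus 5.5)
Your proposal is correct and follows the paper's proof essentially step for step. The paper factors $\iota$ as a finite map $\iota_0$ to $\mathbb G_m^N$ (Smith normal form where you use the finite kernel of the torus homomorphism), followed by the affine open immersion $j$; it gets the identification from base change and the projection formula with $\langle\,,\,\rangle\circ(\iota\times\mathrm{id})=F$ and $p_2\circ(\iota\times\mathrm{id})=\pi_2$ (your shift count is right); and it gets holonomicity from exactness of $j_+$ and $R\iota_{0+}$ followed by the exact Fourier transform.

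The one step you cite rather than prove is t-exactness of $R\iota_{0+}$ on holonomic modules. The paper obtains it from right t-exactness of $\iota_{0*}(\mathcal D_{\mathbb G^N_m\leftarrow\mathbb G^n_m}\otimes^L_{\mathcal D_{\mathbb G^n_m}}(-))$ together with $R\iota_{0+}\cong R\iota_{0!}\cong\mathbb D R\iota_{0+}\mathbb D$. In your setup it is also immediate, since $\iota_0$ is an \'etale isogeny onto a closed subtorus followed by a closed immersion. Your sign worry is moot: the kernel $e^{\langle\,,\,\rangle}$ yields exactly $\xi_j\hat s=-\widehat{(\partial_{x_j}s)}$ and $\partial_{\xi_j}\hat s=\widehat{(x_js)}$.
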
 

\begin{proof} It is clear that $\iota$ is affine. To show it is quasi-finite, it suffices to show the morphism 
$$\iota_0: \mathbb G_m^n\to \mathbb G_m^N,\quad \mathbf t\mapsto (\mathbf t^{\mathbf w_1},\ldots, 
\mathbf t^{\mathbf w_N})$$ induced by $\iota$ is finite. Since the matrix $A=(\mathbf w_1,\ldots, \mathbf w_N)$
has rank $n$, we can choose coordinates on $\mathbb G_m^n$ and $\mathbb G_m^N$ so that the morphism
$\iota_0$ is given by $$(t_1,\ldots, t_n)\mapsto (t_1^{d_1}, \ldots, t_n^{d_n}, 1,\ldots, 1)$$ for a sequence of
integers $d_1|d_2|\cdots|d_n$, and hence is a finite morphism. 
Fix notation by the following commutative diagram:
$$\begin{tikzcd}
\mathbb G_m^n\times \mathbb A^N\arrow[r,"\iota\times\mathrm{id}"]\arrow[d,"\pi_1"]&
\mathbb A^N\times \mathbb A^N\arrow[r,"p_2"]\arrow[d,"p_1"]\arrow[dr,"{\langle\,,\,\rangle}"]&\mathbb A^N\\
\mathbb G_m^n\arrow[r,"\iota"]&\mathbb A^N&\mathbb A^1.
\end{tikzcd}$$
By the base change theorem \cite[Theorem 1.7.3]{Hotta}, the projection formula \cite[Corollary 1.7.5]{Hotta} and the fact that $\mathcal K_{\overline{\boldsymbol\gamma}}$ and 
$\mathcal L$ are integrable connections, we have 
\begin{eqnarray*}
\mathfrak F(R\iota_+\mathcal K_{\overline{\boldsymbol\gamma}})
&\cong& Rp_{2+} \Big(Rp_1^! R\iota_+\mathcal K_{\overline{\boldsymbol\gamma}}\otimes
_{\mathcal O_{\mathbb A^N\times\mathbb A^N}}^! R\langle\,,\,\rangle^! \mathcal L\Big)[1-N]\\
&\cong& Rp_{2+} \Big(R(\iota\times\mathrm{id})_+ R\pi_1^! \mathcal K_{\overline{\boldsymbol\gamma}}
\otimes_{\mathcal O_{\mathbb A^N\times\mathbb A^N}}^! R\langle\,,\,\rangle^! \mathcal L\Big)[1-N]\\
&\cong& Rp_{2+}\Big (R(\iota\times\mathrm{id})_+ \pi_1^* \mathcal K_{\overline{\boldsymbol\gamma}}\otimes
_{\mathcal O_{\mathbb A^N\times\mathbb A^N}} \langle\,,\,\rangle^*\mathcal L\Big)\\
&\cong& R(p_{2} \circ (\iota\times\mathrm{id}))_+ \Big(\pi_1^*\mathcal K_{\overline{\boldsymbol\gamma}}\otimes
_{\mathcal O_{\mathbb G_m^n\times\mathbb A^N}} 
(\langle\,,\,\rangle\circ (\iota\times \mathrm{id}))^* \mathcal L\Big)\\
&\cong&R\pi_{2+} (\pi_1^* \mathcal K_{\overline{\boldsymbol\gamma}}\otimes_{\mathcal O_{\mathbb G_m^n\times\mathbb A^N}} F^* \mathcal L)
\cong\mathrm{Hyp}_{\overline{\boldsymbol\gamma}}.
\end{eqnarray*}
Let $j: \mathbb G_m^N\hookrightarrow \mathbb A^N$ be the open immersion. By \cite[Theorem 3.2.3]{Hotta}, for any 
holonomic $\mathcal D_{\mathbb G^N_m}$-module $\mathcal N$, 
$Rj_+\mathcal N$ lies in the derived category $D^b_h(\mathcal D_{\mathbb A^N})$ of bounded complexes of 
$\mathcal D_{\mathbb A^N}$-modules with holonomic cohomologies. Since $j$ is an affine open immersion, we have
$Rj_+\mathcal N=j_*\mathcal N$. So $Rj_+\mathcal N$ is a holonomic $\mathcal D_{\mathbb A^N}$-module.
Since $\iota_0: \mathbb G_m^n\to \mathbb G_m^N$ is finite, we have 
$$R\iota_{0+}(-)=\iota_{0*}(\mathcal D_{\mathbb G^N_m\leftarrow \mathbb G^n_m}\otimes^L_{\mathcal D_{\mathbb G^n_m}} -).$$
It follows that $R\iota_{0+}$ maps $D^{\leq 0}_h(\mathcal D_{\mathbb G_m^n})$ to $D^{\leq 0}_h(\mathcal D_{\mathbb G_m^N})$. 
On the other hand, we have $$R\iota_{0+}\cong R\iota_{0!}\cong \mathbb D R\iota_{0+}\mathbb D,$$ 
where $\mathbb D$ is the duality functor.
So $R\iota_{0+}$ maps $D^{\geq 0}_h(\mathcal D_{\mathbb G_m^n})$ to $D^{\geq 0}_h(\mathcal D_{\mathbb G_m^N})$. 
Hence for any holonomic $\mathcal D_{\mathbb G^n_m}$-module $\mathcal M$, 
$R\iota_{0+}(\mathcal M)$ is a holonomic $\mathcal D_{\mathbb G^N_m}$-module. Thus 
$R\iota_+\mathcal K_{\overline{\boldsymbol\gamma}}=Rj_+ R\iota_{0+}\mathcal K_{\overline{\boldsymbol\gamma}}$ is 
a holonomic $\mathcal D_{\mathbb A^N}$-module. Therefore its Fourier transform $\mathrm{Hyp}_{\overline{\boldsymbol\gamma}}$ 
is a holonomic $\mathcal D_{\mathbb A^N}$-module. 
\end{proof}

\section{The de Rham complex for $\mathrm{Hyp}_{\overline{\boldsymbol\gamma}}$} 

In this section, we prove Propositions \ref{prop:smalldR} and \ref{prop:betterGKZ}.

Let $\mathbb Z^n$ be the standard lattice in $\mathbb R^n$, let $\Sigma$ be a regular fan of 
rational convex polyhedral cones in
the dual space $(\mathbb R^n)^*$ of $\mathbb R^n$, let $X(\Sigma)$ be the toric variety defined by $\Sigma$, and
let $$j: \mathbb G_m^n=\mathrm{Spec}\,\mathbb C[\mathbb Z^n]\hookrightarrow X(\Sigma)$$ be the open immersion 
of the open dense torus. Since $j$ is affine, we have 
$$\mathcal Rj_+\mathcal N\cong j_+\mathcal N$$ for any quasi-coherent $\mathcal D_{\mathbb G_m^n}$-module $\mathcal N$. 
The complement $D=X(\Sigma)-\mathbb G_m^n$ is a divisor with normal crossing in $X(\Sigma)$. 
Let $\boldsymbol\Omega^\cdot_{X(\Sigma)}(\log D)$ be the de Rham complex of differential forms with 
logarithmic poles along $D$. By \cite[4.3]{Fulton}, we have an isomorphism 
$$\mathcal O_{X(\Sigma)}\otimes_{\mathbb Z} \mathbb Z^n \stackrel\cong\to \boldsymbol\Omega^1_{X(\Sigma)}(\log D),\quad
f\otimes \mathbf w\mapsto f\frac{\mathrm d\mathbf t^{\mathbf w}}{\mathbf t^{\mathbf w}}$$
So $\boldsymbol\Omega^1_{X(\Sigma)}(\log D)$ is a globally free $\mathcal O_{X(\Sigma)}$-module with basis 
$\frac{\mathrm dt_1}{t_1}, \ldots, \frac{\mathrm dt_n}{t_n}$.
Let $\widetilde{\mathcal K}_{\boldsymbol\gamma}$ be $\mathcal O_{X(\Sigma)}$
endowed with the meromorphic connection with logarithmic pole 
$$\nabla: \mathcal O_{X(\Sigma)}\to \boldsymbol\Omega^1_{X(\Sigma)}(\log D), \quad h\mapsto \mathrm dh+
\Big(\sum_{i=1}^n \gamma_i \frac{\mathrm dt_i}{t_i}\Big)h.$$ Unlike $\mathcal K_{\overline{\boldsymbol\gamma}}$ which 
depends only on $\overline{\boldsymbol\gamma}=\boldsymbol\gamma+\mathbb Z$, the meromorphic connection 
$\widetilde{\mathcal K}_{\boldsymbol\gamma}$ depends on the choice of the representative $\boldsymbol\gamma$ 
of $\overline{\boldsymbol\gamma}$. 

\begin{lemma}\label{prop:logdR} Notation as above. Let $Y$ be a smooth variety, let $\mathcal M$ be 
$\mathcal O_{X(\Sigma)\times Y}$ endowed with an integrable connection, and 
let $$p_1:  X(\Sigma)\times Y\to X(\Sigma)$$ be the 
projection. Suppose none of the residues of the 1-form
$\sum_{i=1}^n \gamma_i \frac{\mathrm dt_i}{t_i}$ along irreducible components 
of $X(\Sigma)-\mathbb G_m^n$ is a strictly positive integer. Then the relative de Rham complex 
of differential forms with logarithmic poles 
$$p_1^* \boldsymbol\Omega^\cdot_{X(\Sigma)}(\log D)\otimes_{\mathcal O_{X(\Sigma)\times Y}} 
p_1^*\widetilde{\mathcal K}_{\boldsymbol\gamma}\otimes_{\mathcal O_{X(\Sigma)\times Y}}\mathcal M$$
for the meromorphic connection 
$p_1^*\widetilde{\mathcal K}_{\boldsymbol\gamma}\otimes_{\mathcal O_{X(\Sigma)\times Y}}\mathcal M$ is quasi-isomorphic 
to the relative de Rham complex 
$$p_1^* \boldsymbol\Omega^\cdot_{X(\Sigma)}\otimes_{\mathcal O_{X(\Sigma)\times Y}} 
p_1^*j_+\mathcal K_{\overline{\boldsymbol\gamma}}\otimes_{\mathcal O_{X(\Sigma)\times Y}}\mathcal M$$
for the $\mathcal D$-module $p_1^*j_+\mathcal K_{\overline{\boldsymbol\gamma}} \otimes_{\mathcal O_{X(\Sigma)\times Y}}\mathcal M$.
\end{lemma}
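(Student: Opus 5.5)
The plan is to reduce to a local computation on each affine toric chart, where the statement becomes an explicit comparison of complexes. Since $\Sigma$ is regular, $X(\Sigma)$ is covered by the affine open sets $U_\sigma\cong\mathbb A^k\times\mathbb G_m^{n-k}$. We choose coordinates $t_1,\ldots,t_n$ (a basis of $\mathbb Z^n$ dual to the rays of $\sigma$) so that $D\cap U_\sigma=\{t_1\cdots t_k=0\}$. In these coordinates the residue of $\sum_i\gamma_i\frac{\mathrm dt_i}{t_i}$ along $\{t_i=0\}$ is the new coordinate $\gamma_i$. The hypothesis therefore says that $\gamma_i\notin\mathbb Z_{>0}$ for $i\le k$. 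There is a natural inclusion of complexes
$$p_1^*\boldsymbol\Omega^\cdot_{X(\Sigma)}(\log D)\otimes p_1^*\widetilde{\mathcal K}_{\boldsymbol\gamma}\otimes\mathcal M\hookrightarrow p_1^*\boldsymbol\Omega^\cdot_{X(\Sigma)}\otimes p_1^*j_*\mathcal K_{\overline{\boldsymbol\gamma}}\otimes\mathcal M .$$
Here $j_+\mathcal K_{\overline{\boldsymbol\gamma}}=j_*\mathcal O_{\mathbb G_m^n}$ carries the connection $\mathrm d+\sum\gamma_i\frac{\mathrm dt_i}{t_i}$, and a logarithmic form with coefficients in $\mathcal O_{X(\Sigma)}$ is in particular a form with coefficients in $j_*\mathcal O$. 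Both complexes are defined globally, so the inclusion is global, and it suffices to show that it is a quasi-isomorphism locally on each $U_\sigma\times Y$.

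On $U_\sigma\times Y$ the sheaf $j_*\mathcal O$ is $\mathcal O[t_1^{-1},\ldots,t_k^{-1}]$, so we work one variable at a time. Both complexes are Koszul-type complexes for the commuting operators $\nabla_{t_i\partial_{t_i}}$, $i=1,\ldots,n$, acting on $\mathcal O_{U_\sigma\times Y}$, respectively on its localization. In both cases the forms $\frac{\mathrm dt_i}{t_i}$ serve as bases of the module of $1$-forms. We then filter by the variables $t_1,\ldots,t_k$ and use the standard fact that a Koszul complex of commuting operators is an iterated mapping cone. This reduces the claim to the following one-variable statement: for each $i\le k$, the map $\mathcal O\to\mathcal O[t_i^{-1}]$, which commutes with $\nabla_{t_i\partial_{t_i}}$ and with all the other operators, induces an isomorphism on the kernel and on the cokernel of $\nabla_{t_i\partial_{t_i}}$. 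Here $\mathcal O$ already has $t_1,\ldots,t_{i-1}$ inverted. Equivalently, $\nabla_{t_i\partial_{t_i}}$ acts bijectively on the quotient $\mathcal O[t_i^{-1}]/\mathcal O$.

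To check this bijectivity, write $\mathcal M$ locally as $\mathcal O$ with connection $\mathrm d+\omega$, where $\omega$ is a regular relative $1$-form. Relative here means relative to $Y$, so only the $\mathrm dt$-components of $\omega$ enter. Then on a class $t_i^{-m}g$ with $m\ge1$ we get
$$\nabla_{t_i\partial_{t_i}}(t_i^{-m}g)=t_i^{-m}\big((\gamma_i-m)g+t_i\partial_{t_i}g+\langle\omega,t_i\partial_{t_i}\rangle g\big).$$
The term $\langle\omega,t_i\partial_{t_i}\rangle$ is divisible by $t_i$ because $\omega$ is regular. So on the graded pieces of the $t_i$-pole-order filtration of $\mathcal O[t_i^{-1}]/\mathcal O$, the operator acts as multiplication by $\gamma_i-m$. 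This scalar is nonzero exactly because $\gamma_i\notin\mathbb Z_{>0}$. The pole-order filtration is exhaustive and bounded below, so bijectivity on the graded pieces gives bijectivity on the whole quotient, by induction on pole order. Note also that the terms coming from $\partial_{t_i}g$ and from $\omega$ raise the $t_i$-adic order, so they are nilpotent on each graded quotient.

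The main obstacle is organizing the multi-variable Koszul reduction cleanly: one must check that the localization in one variable $t_i$ commutes with the operators for the other variables, so that the iterated-cone argument applies. I would handle this by proceeding in order $i=1,\ldots,k$, comparing the complex with $t_1,\ldots,t_{i-1}$ inverted to the one with $t_1,\ldots,t_i$ inverted. At each step the quotient complex is the Koszul complex of all the operators on $\mathcal O[t_i^{-1}]/\mathcal O$, and it is acyclic because one of its operators, $\nabla_{t_i\partial_{t_i}}$, is bijective.
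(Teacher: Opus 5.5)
Your argument is correct, and its local input is the same as the paper's. You use the same inclusion of the logarithmic complex into the meromorphic one and the same reduction to a chart $\mathbb A^k\times\mathbb G_m^{n-k}$. You also make the same key observation: on the graded pieces of the pole-order filtration the operator acts by the nonzero scalar $\gamma_i-m$, and the connection form of $\mathcal M$ drops out. This is the paper's ``$\gamma\neq k+1$'' and ``$\phi$ disappears after taking $\mathrm{Gr}^P$''.

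The difference is in how several boundary divisors are handled. The paper follows Deligne. It puts one multi-index pole-order filtration $P$ on the meromorphic complex, with the shift $n_1+\cdots+n_r\le k+q+r$, and lets it induce the filtration $F$ on the logarithmic complex. It then decomposes $\mathrm{Gr}^P$ and $\mathrm{Gr}^F$ as external products of one-variable complexes. Finally it compares graded pieces using the explicit $n=1$ computation, including the bottom pieces $P_0/P_{-1}$ and $P_{-1}$ that must match $\mathrm{Gr}^F$. You instead invert $t_1,\ldots,t_k$ one at a time. You kill each successive quotient complex through the mapping-cone description of a Koszul complex of commuting operators. This needs only that the single operator $\nabla_{t_i\partial_{t_i}}$ be bijective on $\mathcal O[t_i^{-1}]/\mathcal O$.

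Your d\'evissage is more economical. The remaining operators, the $\mathbb G_m^{n-k}$ directions and the form $\omega$ only have to commute with $\nabla_{t_i\partial_{t_i}}$ and preserve the submodules. So you never need the external-product splitting of the graded complexes, nor a separate matching of the lowest filtration steps. The paper's version gives slightly more: a filtered comparison, with $\mathrm{Gr}^F\to\mathrm{Gr}^P$ a quasi-isomorphism. That is the form in which Deligne's argument is usually reused.
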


\begin{proof} We have morphisms 
\begin{eqnarray*}
\boldsymbol\Omega^\cdot_{X(\Sigma)}(\log D)\otimes_{\mathcal O_{X(\Sigma)}} 
\widetilde{\mathcal K}_{\boldsymbol\gamma}&\hookrightarrow &
j_*\boldsymbol\Omega^\cdot_{\mathbb G_m^n}\otimes_{\mathcal O_{X(\Sigma)}} 
\widetilde{\mathcal K}_{\boldsymbol\gamma}
\cong j_*(\boldsymbol\Omega^\cdot_{\mathbb G_m^n}\otimes_{\mathcal O_{\mathbb G_m^n}}
{\mathcal K}_{\overline{\boldsymbol\gamma}})\\
&\cong&  \boldsymbol\Omega^\cdot_{X(\Sigma)}\otimes_{\mathcal O_{X(\Sigma)}} 
j_*\mathcal K_{\overline{\boldsymbol\gamma}}
=\boldsymbol\Omega^\cdot_{X(\Sigma)}\otimes_{\mathcal O_{X(\Sigma)}} 
j_+\mathcal K_{\overline{\boldsymbol\gamma}}\
\end{eqnarray*}
So we have a monomorphism 
$$p_1^* \boldsymbol\Omega^\cdot_{X(\Sigma)}(\log D)\otimes_{\mathcal O_{X(\Sigma)\times Y}} 
p_1^*\widetilde{\mathcal K}_{\boldsymbol\gamma}\otimes_{\mathcal O_{X(\Sigma)\times Y}}\mathcal M
\hookrightarrow p_1^* \boldsymbol\Omega^\cdot_{X(\Sigma)}\otimes_{\mathcal O_{X(\Sigma)\times Y}} 
p_1^*j_+\mathcal K_{\overline{\boldsymbol\gamma}}\otimes_{\mathcal O_{X(\Sigma)\times Y}}\mathcal M.
$$
Let's prove it is a quasi-isomorphism. It suffices to show it is so when restricted to an open subset 
of the form $X_\sigma\cong \mathrm{Spec}\, \mathbb C[\check\sigma\cap \mathbb Z^n]$, where 
$\sigma$ is a cone in $\Sigma$ and $\check\sigma$ is dual cone of $\sigma$. Since $\sigma$ is a regular cone,
after a change of coordinates on $\mathbb G_m^n$, 
the open immersion $j: \mathbb G_m^n\hookrightarrow X_\sigma$ of the open dense torus 
can be identified with 
$$\mathbb G_m^n=\mathbb G_m^r\times\mathbb G_m^{n-r}\hookrightarrow \mathbb A^r\times \mathbb G_m^{n-r}.$$
After the change of coordinates, $\mathcal K_{\overline{\boldsymbol\gamma}}$ becomes 
$\mathcal K_{\overline{\boldsymbol\gamma}'}$ for some $\overline{\boldsymbol\gamma}'\in\mathbb C^n$. 
For convenience, we denote $\boldsymbol\gamma'$ still by $\boldsymbol\gamma$.  

We use Deligne's method in the proof of \cite[Proposition II 3.13]{D}.
To explain the idea of the proof, we first consider the case where $n=1$ and $j: \mathbb G_m^n\hookrightarrow X_\sigma$ 
is the open immersion $j:\mathbb G_m\hookrightarrow \mathbb A^1$. By our assumption, $\mathcal M$ is 
$\mathcal O_{\mathbb A^1\times Y}$ as an $\mathcal O_{\mathbb A^1\times Y}$-module.  
The relative de Rham complex of differential forms with logarithmic poles for the meromorphic connection 
$p_1^*\widetilde{\mathcal K}_{\boldsymbol\gamma}\otimes_{\mathcal O_{X(\Sigma)\times Y}}\mathcal M$ 
is isomorphic to 
\begin{eqnarray}\label{dRlog}
0\to \mathcal O_{\mathbb A^1\times Y} 
\xrightarrow{\mathrm d+(\frac{\gamma}{t} +\phi)\mathrm dt} \mathcal O_{\mathbb A^1\times Y}  \frac{\mathrm dt}{t}\to 0
\end{eqnarray} 
for some $\phi \in\Gamma(\mathbb A^1\times Y, \mathcal O_{\mathbb A^1\times Y})$. 
The de Rham complex for the $\mathcal D$-module 
$p_1^*j_+\mathcal K_{\overline{\boldsymbol\gamma}} \otimes_{\mathcal O_{X(\Sigma)\times Y}}\mathcal M$
is isomorphic to 
\begin{eqnarray}\label{dRmero}
0\to j'_*\mathcal O_{\mathbb G_m\times Y} \xrightarrow{\mathrm d+(\frac{\gamma}{t} +\phi)\mathrm dt} 
j'_* \mathcal O_{\mathbb G_m\times Y} \mathrm dt\to 0,
\end{eqnarray}
where $j': \mathbb G_m\times Y\hookrightarrow \mathbb A^1\times Y$ 
is the base change of $j:\mathbb G_m\hookrightarrow \mathbb A^1$.
Let $D$ be the divisor $\{0\}\times Y$ on $\mathbb A^1\times Y$.
Define an increasing filtration $P$ on the complex (\ref{dRmero}) by 
\begin{eqnarray*}
P_k=\begin{cases}
\Big(0\to \mathcal O_{\mathbb A^1\times Y} ((k+1)D) \xrightarrow{\mathrm d+(\frac{\gamma}{t}+\phi)\mathrm dt} 
\mathcal O_{\mathbb A^1\times Y} ((k+2)D) {\mathrm dt}\to 0\Big)
&\hbox{if }k\geq 0,\\
\Big(0\to 0 \to \mathcal O_{\mathbb A^1\times Y} (D) {\mathrm dt}\to 0\Big)
&\hbox{if }k=-1,\\
0&\hbox{if } k\leq -2.
\end{cases}
\end{eqnarray*}
This filtration induces the following Hodge filtration $F$ on the complex (\ref{dRlog}):
\begin{eqnarray*}
F_k=\begin{cases}
\Big(0\to \mathcal O_{\mathbb A^1\times Y} \xrightarrow{\mathrm d+(\frac{\gamma}{t}+\phi)\mathrm dt} 
\mathcal O_{\mathbb A^1\times Y}\frac{\mathrm dt}{t}\to 0\Big)
&\hbox{if }k\geq 0,\\
\Big(0\to 0 \to  \mathcal O_{\mathbb A^1\times Y}\frac {\mathrm dt}{t}\to 0\Big)
&\hbox{if }k=-1,\\
0&\hbox{if } k\leq -2.
\end{cases}
\end{eqnarray*}
For any $k\geq 1$, we have 
$$P_k/P_{k-1}\cong 
\Big(0\to \frac{1}{t^{k+1}}\mathcal O_{\mathbb A^1\times Y}\Big/\frac{1}{t^{k}}\mathcal O_{\mathbb A^1\times Y} 
\xrightarrow{\frac{\mathrm d}{\mathrm dt}+\frac{\gamma}{t}}\frac{1}{t^{k+2}}
\mathcal O_{\mathbb A^1\times Y}\Big/\frac{1}{t^{k+1}}\mathcal O_{\mathbb A^1\times Y}\to 0\Big).$$
Note that $\phi$ disappears after taking $\mathrm{Gr}^P$. 
Since $\gamma\not=k+1$, the last complex is acyclic. 
We have 
\begin{eqnarray*}
P_0/P_{-1}\cong 
\Big(0\to \frac{1}{t}\mathcal O_{\mathbb A^1\times Y}
\xrightarrow{\frac{\mathrm d}{\mathrm dt}+\frac{\gamma}{t}}\frac{1}{t^2}
\mathcal O_{\mathbb A^1\times Y}\Big/\frac{1}{t}\mathcal O_{\mathbb A^1\times Y}\to 0\Big).
\end{eqnarray*}
Since $\gamma\not=1$, we have $\mathrm{ker}\big(\frac{\mathrm d}{\mathrm dt}+
\frac{\gamma}{t}\big)=\mathcal O_{\mathbb A^1\times Y}$ and 
$\mathrm{coker}\big(\frac{\mathrm d}{\mathrm dt}+\frac{\gamma}{t}\big)=0$.
Finally, we have 
\begin{eqnarray*}
P_{-1}^\cdot/P_{-2}^\cdot\cong 
\Big(0\to 0 \to \mathcal O_{\mathbb A^1\times Y}\frac {\mathrm dt}{t}\to 0\Big),
\end{eqnarray*}
It follows that $\mathrm{Gr}^P$ of the complex (\ref{dRmero}) 
and $\mathrm{Gr}^F$ of the complex (\ref{dRlog}) are quasi-isomorphic. So the complex (\ref{dRmero}) 
and the complex (\ref{dRlog}) are quasi-isomorphic.

Next we treat the higher dimensional case. We have 
$$ p_1^* \boldsymbol\Omega^\cdot_{X_\sigma}\otimes_{\mathcal O_{X_\sigma\times Y}} 
p_1^*j_+\mathcal K_{\overline{\boldsymbol\gamma}}\otimes_{\mathcal O_{X_\sigma\times Y}}\mathcal M
\cong p_1^*\boldsymbol\Omega_{\mathbb A^r\times\mathbb G_m^{n-r}}^\cdot
\otimes_{\mathcal O_{\mathbb A^r\times\mathbb G_m^{n-r}\times Y}} 
j'_*\mathcal O_{\mathbb G_m^r\times\mathbb G_m^{n-r}\times Y},$$
where $j'$ is the open immersion $j': \mathbb G_m^r\times\mathbb G_m^{n-r}\times Y\hookrightarrow
\mathbb A^r\times\mathbb G_m^{n-r}\times Y$. 
Define an increasing filtration $P$ on this complex by 
\begin{align*}
&P_k(p_1^*\boldsymbol\Omega_{\mathbb A^r\times\mathbb G_m^{n-r}}^q\otimes_{\mathcal O_{\mathbb A^r\times\mathbb G_m^{n-r}\times Y}} 
j'_*\mathcal O_{\mathbb G_m^r\times\mathbb G_m^{n-r}\times Y})\\
=& \sum_{\substack{n_1, \ldots, n_r\geq 1\\
n_1+\cdots+n_r\leq k+q+r}}p_1^*\boldsymbol\Omega_{\mathbb A^r\times\mathbb G_m^{n-r}}^q\otimes_{\mathcal O_{\mathbb A^r\times\mathbb G_m^{n-r}\times Y}} 
\mathcal O_{\mathbb A^r\times\mathbb G_m^{n-r}\times Y}(n_1D_1+\cdots+n_rD_r),
\end{align*} where $D_i$ $(i=1, \ldots, r)$ are the divisors on $\mathbb A^r\times\mathbb G_m^{n-r}\times Y$ defined by 
$t_i=0$. 
It induces the Hodge filtration $F$ on the subcomplex 
$p_1^* \boldsymbol\Omega^\cdot_{X_\sigma}(\log D)\otimes_{\mathcal O_{X_\sigma\times Y}} 
p_1^*\widetilde{\mathcal K}_{\boldsymbol\gamma}\otimes_{\mathcal O_{X_\sigma\times Y}}\mathcal M
\cong p_1^*\boldsymbol\Omega_{\mathbb A^r\times\mathbb G_m^{n-r}}^\cdot(\log D)$:
$$
F_k(p_1^*\boldsymbol\Omega_{\mathbb A^r\times\mathbb G_m^{n-r}}^q(\log D))=\begin{cases}
p_1^*\boldsymbol\Omega_{\mathbb A^r\times\mathbb G_m^{n-r}}^q(\log D) &\hbox{if } k\geq -q,\\
0&\hbox{if } k<-q. \end{cases}$$
We have 
\begin{eqnarray*}
\mathrm{Gr}^P(p_1^*\boldsymbol\Omega^\cdot_{\mathbb A^r\times\mathbb G_m^{n-r}}\otimes_
{\mathcal O_{\mathbb A^r\times\mathbb G_m^{n-r}\times Y}}j'_*\mathcal O_{\mathbb G_m^r\times\mathbb G_m^{n-r}\times Y})
&\cong&\mathrm{Gr}^P (\boldsymbol\Omega^\cdot_{\mathbb A^1}\otimes_{\mathcal O_{\mathbb A^1}}j_*\mathcal O_{\mathbb G_m})
^{\boxtimes r} \boxtimes
\boldsymbol\Omega^\cdot_{\mathbb G_m^{n-r}}\boxtimes\mathcal O_Y,\\
\mathrm{Gr}^F(p_1^*\boldsymbol\Omega_{\mathbb A^r\times\mathbb G_m^{n-r}}^\cdot(\log D))
&\cong&\mathrm{Gr}^F (\boldsymbol\Omega^\cdot_{\mathbb A^1}(\log D))^{\boxtimes r} \boxtimes
\boldsymbol\Omega^\cdot_{\mathbb G_m^{n-r}}\boxtimes \mathcal O_Y.
\end{eqnarray*}
Combined with the dimension 1 case treated above, we conclude that the monomorphism 
$$p_1^* \boldsymbol\Omega^\cdot_{X_\sigma}(\log D)\otimes_{\mathcal O_{X_\sigma\times Y}} 
p_1^*\widetilde{\mathcal K}_{\boldsymbol\gamma}\otimes_{\mathcal O_{X_\sigma\times Y}}\mathcal M
\hookrightarrow p_1^* \boldsymbol\Omega^\cdot_{X_\sigma}\otimes_{\mathcal O_{X_\sigma\times Y}} 
p_1^*j_+\mathcal K_{\overline{\boldsymbol\gamma}}\otimes_{\mathcal O_{X_\sigma\times Y}}\mathcal M.
$$
induces a quasi-isomorphism
$$\mathrm{Gr}^F(p_1^* \boldsymbol\Omega^\cdot_{X_\sigma}(\log D)\otimes_{\mathcal O_{X_\sigma\times Y}} 
p_1^*\widetilde{\mathcal K}_{\boldsymbol\gamma}\otimes_{\mathcal O_{X_\sigma\times Y}}\mathcal M)\to 
\mathrm{Gr}^P(p_1^* \boldsymbol\Omega^\cdot_{X_\sigma}\otimes_{\mathcal O_{X_\sigma\times Y}} 
p_1^*j_+\mathcal K_{\overline{\boldsymbol\gamma}}\otimes_{\mathcal O_{X_\sigma\times Y}}\mathcal M).$$ Hence the 
above monomorphism is a quasi-isomorphism.
\end{proof}

\subsection{Proof of Proposition \ref{prop:smalldR}} Let $\Sigma_0$ be the fan of faces of the dual cone $\check\delta$ of
$\delta$. The toric variety associated to $\Sigma_0$ is 
$X(\Sigma_0)\cong\mathrm{Spec}\,\mathbb C[\delta\cap \mathbb Z^n]$. 
Let $\Sigma$ be a regular refinement of $\Sigma_0$, let $X(\Sigma)$ the toric variety associated to 
$\Sigma$, and let $$\mathbb R_{\geq 0}l_1, \ldots, 
\mathbb R_{\geq 0}l_m$$ be all the one dimensional cones in 
$\Sigma$, where $$l_i(\mathbf w)=a_{i1}w_1+\cdots+a_{in}w_n\quad (i=1,\ldots, m)$$ are linear forms on $\mathbb Z^n$
with relatively prime integer coefficients $a_{i1},\ldots, a_{in}$. Choose a representative $\boldsymbol\gamma$
of $\overline{\boldsymbol\gamma}$ so that the following condition holds:
\begin{eqnarray}\label{condition}
l_1(\boldsymbol\gamma), \ldots, l_m(\boldsymbol\gamma)\hbox{ are not strictly positive integers.}
\end{eqnarray}
Let us prove Proposition \ref{prop:smalldR} for such a choice of $\boldsymbol\gamma$.
Let $$F_0: X(\Sigma_0)\times\mathbb A^N\to \mathbb A^1$$ be the morphism defined by the Laurent polynomial 
$F(\mathbf t, \mathbf x)=\sum_{j=1}^N x_j \mathbf t^{\mathbf w_j}\in \mathbb C[\mathbf x][\delta\cap \mathbb Z^n]$,
let $$p: X(\Sigma)\to X(\Sigma_0)$$ be the canonical morphism which is a resolution of singularities, and let 
$\overline F=F_0 (p\times\mathrm{id}_{\mathbb A^N})$. Fix notation by the following commutative diagram
$$\begin{tikzcd}
\mathbb G^n_m\arrow[d,hook, "j",swap]&\mathbb G_m^n\times \mathbb A^N\arrow[l, "\pi_1",swap]
\arrow[d,hook, "j'",swap]\arrow[dr, "\pi_2"]&\\
X(\Sigma)&X(\Sigma)\times \mathbb A^N\arrow[l,"p_1",swap]\arrow[r,"p_2"]\arrow[d, "p\times\mathrm{id}_{\mathbb A^N}"]
\arrow[dd,bend right=70,"\overline F", swap]&\mathbb A^N\\
&X(\Sigma_0)\times\mathbb A^N\arrow[ur, "q_2",swap]\arrow[d,"F_0"]&\\
&\mathbb A^1.&
\end{tikzcd}$$
Since $\pi_1$ and $p_1$ are smooth, we have
\begin{align*}
\mathrm{Hyp}_{\overline{\boldsymbol\gamma}}&\cong 
R\pi_{2+}(\pi_1^*\mathcal K_{\overline{\boldsymbol\gamma}}\otimes_{\mathcal O_{\mathbb G_m^n\times \mathbb A^N}} F^*\mathcal L)
\cong Rp_{2+}Rj'_+(\pi_1^*\mathcal K_{\overline{\boldsymbol\gamma}}
\otimes_{\mathcal O_{\mathbb G_m^n\times \mathbb A^N}} j'^*\overline F^*\mathcal L)\\
&\cong Rp_{2+}(Rj'_+\pi_1^*\mathcal K_{\overline{\boldsymbol\gamma}}\otimes_{\mathcal O_{X(\Sigma)\times \mathbb A^N}} 
\overline F^*\mathcal L) \qquad\hbox{(projection formula)}\\
&\cong Rp_{2+}(p_1^* Rj_+\mathcal K_{\overline{\boldsymbol\gamma}}\otimes_{\mathcal O_{X(\Sigma)\times \mathbb A^N}} 
\overline F^*\mathcal L)\qquad(\hbox{base change theorem, } \pi_1^*=R\pi_1^![-N]\,\; p_1^*=Rp_1^![-N])\\
&\cong Rp_{2+}(p_1^* j_+\mathcal K_{\overline{\boldsymbol\gamma}}\otimes_{\mathcal O_{X(\Sigma)\times \mathbb A^N}} 
\overline F^*\mathcal L)\\ 
&\cong  Rp_{2*}\big(\mathcal D_{\mathbb A^N\leftarrow X(\Sigma)\times\mathbb A^N}\otimes^L_{\mathcal D_{X(\Sigma)\times
\mathbb A^N}}
(p_1^* j_+\mathcal K_{\overline{\boldsymbol\gamma}}\otimes_{\mathcal O_{X(\Sigma)\times \mathbb A^N}} 
\overline F^*\mathcal L)\big).
\end{align*}
By \cite[Lemma 1.5.27]{Hotta}, in the derived category of 
$(p_2^{-1}\mathcal D_{\mathbb A^N}, \mathcal D_{X(\Sigma)\times\mathbb A^N})$-modules, we have 
$$\mathcal D_{\mathbb A^N\leftarrow X(\Sigma)\times\mathbb A^N}
\cong \omega_{X(\Sigma)}\boxtimes \mathcal D_{\mathbb A^N}
\cong(\boldsymbol\Omega^\cdot_{X(\Sigma)}[n]\otimes_{\mathcal O_{X(\Sigma)}}\mathcal D_{X(\Sigma)})\boxtimes 
\mathcal D_{\mathbb A^N}.$$
So 
$\mathrm{Hyp}_{\overline{\boldsymbol\gamma}}$ is isomorphic to the relative de Rham complex 
$$Rp_{2*}(p_1^* \boldsymbol\Omega^\cdot_{X(\Sigma)}\otimes_{\mathcal O_{X(\Sigma)\times \mathbb A^N}} 
p_1^*j_+\mathcal K_{\overline{\boldsymbol\gamma}}\otimes_{\mathcal O_{X(\Sigma)\times \mathbb A^N}} \overline F^*\mathcal L)[n].$$
The residues of $\sum_{i=1}^n \gamma_i \frac{\mathrm dt_i}{t_i}$ along irreducible components
of $X(\Sigma)-\mathbb G_m^n$ are exactly $l_1(\boldsymbol\gamma), \ldots, l_m(\boldsymbol\gamma)$. By the condition 
(\ref{condition}), Lemma \ref{prop:logdR} is applicable, and we get 
\begin{align}\label{aln:HypdR}
\mathrm{Hyp}_{\overline{\boldsymbol\gamma}}
\cong Rp_{2*}(p_1^* \boldsymbol\Omega^\cdot_{X(\Sigma)}(\log D)\otimes_{\mathcal O_{X(\Sigma)\times \mathbb A^N}} 
p_1^*\widetilde{\mathcal K}_{\boldsymbol\gamma}\otimes_{\mathcal O_{X(\Sigma)\times \mathbb A^N}}\overline F^*\mathcal L)[n].
\end{align}
As a normal toric variety, $X(\Sigma_0)$ has rational singularity (\cite[Theorems 11.4.2, 9.2.5]{CLS}) and we have 
$$Rp_*\mathcal O_{X(\Sigma)}\cong \mathcal O_{X(\Sigma_0)}.$$
So we have 
$$R (p\times\mathrm{id}_{\mathbb A^N})_*\mathcal O_{X(\Sigma)\times\mathbb A^N}
\cong \mathcal O_{X(\Sigma_0)\times\mathbb A^N}.$$
Let $$q_2: X(\Sigma_0)\times \mathbb A^N\to \mathbb A^N$$ be the projection. Since  $X(\Sigma_0)$
is affine, $q_2$ is an affine morphism. So we have 
\begin{align*}
Rp_{2 *}\mathcal O_{X(\Sigma)\times\mathbb A^N}\cong Rq_{2*}
R(p\times\mathrm{id}_{\mathbb A^N})_*\mathcal O_{X(\Sigma)\times\mathbb A^N}
\cong q_{2*}\mathcal O_{X(\Sigma_0)\times\mathbb A^N}.
\end{align*}
Since $\Omega^1_{X(\Sigma)}(\log D)$ is a free $\mathcal O_{X(\Sigma)}$-module 
with basis $\frac{\mathrm dt_1}{t_1}, \ldots,\frac{\mathrm dt_n}{t_n}$, and 
$p_1^*\widetilde{\mathcal K}_{\boldsymbol\gamma}\otimes_{\mathcal O_{X(\Sigma)\times \mathbb A^N}}\overline F^*\mathcal L$
is isomorphic to $\mathcal O_{X(\Sigma)\times \mathbb A^N}$ as an $\mathcal O_{X(\Sigma)\times \mathbb A^N}$-module, we have 
$$Rp_{2*}(p_1^* \boldsymbol\Omega^\cdot_{X(\Sigma)}(\log D)\otimes_{\mathcal O_{X(\Sigma)\times \mathbb A^N}} 
p_1^*\widetilde{\mathcal K}_{\boldsymbol\gamma}\otimes_{\mathcal O_{X(\Sigma)\times \mathbb A^N}}\overline F^*\mathcal L)
\cong \mathcal C_{\boldsymbol\gamma}^\cdot.$$
Combined with (\ref{aln:HypdR}), we get 
$\mathrm{Hyp}_{\overline{\boldsymbol\gamma}}\cong \mathcal C^\cdot_{\boldsymbol\gamma}[n]$. \qed

\subsection{}
Let 
$$C(A)=\{c_1 \mathbf w_1+\cdots+c_N\mathbf w_N: c_i\in \mathbb Z_{\geq 0}\}$$
be the semigroup generated by $\mathbf w_1,\ldots,\mathbf w_N$, let
$$\mathbb C[C(A)]:= \Big\{\sum_{\mathbf w\in C(A)} a_{\mathbf w} \mathbf t^{\mathbf w}: 
a_{\mathbf w}\in \mathbb C\Big\}$$ be the semigroup ring, let $q'_2: \mathrm{Spec}\,\mathbb C[C(A)]\times\mathbb A^N\to\mathbb A^N$
be the projection, and let $(\mathcal E^\cdot_{\boldsymbol\gamma},d)$ be the complex of de Rham type so that
$$\mathcal E_{\boldsymbol\gamma}^q = \bigoplus_{1\leq i_1<\cdots<i_q\leq n} q'^*_2 \mathcal O_{\mathrm{Spec}\, 
\mathbb C[C(A)]\times\mathbb A^N}
\frac{\mathrm dt_{i_1}}{t_{i_1}}\wedge\cdots\wedge \frac{\mathrm dt_{i_q}}{t_{i_q}}.$$
and the operator $d^q: \mathcal E_{\boldsymbol\gamma}^q\to \mathcal E_{\boldsymbol\gamma}^{q+1}$ is given by 
(\ref{aln:d}). This is a complex of $\mathcal D_{\mathbb A^N}$-modules so that the action of 
$\partial_{x_j}$ is given by the formula (\ref{aln:nabla}). We have the following result of Adolphson \cite[Theorem 4.4]{A}. 

\begin{lemma}[Adolphson] \label{lm: AGKZdeRham} The map
\begin{eqnarray*}
\mathcal D_{\mathbb A^N}&\to&  q'^*_2 \mathcal O_{\mathrm{Spec}\, 
\mathbb C[C(A)]\times\mathbb A^N}
\frac{\mathrm dt_1}{t_1}\wedge\cdots\wedge \frac{\mathrm dt_n}{t_n},\\
\sum_{i_1\ldots i_N} a_{i_1\ldots i_N}(\mathbf x)\partial_{x_1}^{i_1}\cdots\partial_{x_N}^{i_N}&\mapsto& 
\sum_{i_1\ldots i_N} a_{i_1\ldots i_N}(\mathbf x) \mathbf t^{i_1\mathbf w_1+\cdots+i_N\mathbf w_N}\frac{\mathrm dt_1}{t_1}\wedge\cdots\wedge \frac{\mathrm dt_n}{t_n}
\end{eqnarray*}
induces an isomorphism of $\mathcal D_{\mathbb A^N}$-module
$$\mathcal H_{\boldsymbol\gamma}= \mathcal D_{\mathbb A^N}\Big/\mathcal D_{\mathbb A^N}
\Big(\sum_{j=1}^N w_{ij}x_j\partial_{x_j}+\gamma_i,\,\square_{\boldsymbol\lambda}\Big)\cong 
\mathcal H^n(\mathcal E^\cdot_{\boldsymbol\gamma}).$$ 
\end{lemma}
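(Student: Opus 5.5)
This is Adolphson's result; the plan is to check it directly on global sections, since $\mathbb A^N$ is affine and all sheaves involved are quasi-coherent. Put $R=\mathbb C[\mathbf x][C(A)]$, write $\omega=\frac{\mathrm dt_1}{t_1}\wedge\cdots\wedge\frac{\mathrm dt_n}{t_n}$, and write $D_i$ for the operator $t_i\partial_{t_i}+\gamma_i+\sum_j w_{ij}x_j\mathbf t^{\mathbf w_j}$ on $R$. Since $\mathcal E^{n+1}_{\boldsymbol\gamma}=0$, we have
$$\mathcal H^n(\mathcal E^\cdot_{\boldsymbol\gamma})=R\omega\Big/\sum_{i=1}^n D_i(R)\omega,$$
with $\partial_{x_j}$ acting as $\partial_{x_j}+\mathbf t^{\mathbf w_j}$. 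The proof then has three parts: (a) show the map $\Phi\colon P\mapsto P\cdot(1\,\omega)$ is well defined and $\mathcal D$-linear; (b) show it is surjective; (c) show its kernel is exactly the GKZ left ideal.

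\emph{Step (a).} Each $D_i$ commutes with the twisted action $\nabla_{\partial_{x_j}}$ by (\ref{aln:nabla}), so $\mathcal H^n$ is a $\mathcal D_{\mathbb A^N}$-module and $\Phi$ is $\mathcal D$-linear. Next, $\nabla_{\partial_{x_1}}^{i_1}\cdots\nabla_{\partial_{x_N}}^{i_N}(1)=\mathbf t^{\sum_k i_k\mathbf w_k}$, which matches the formula in the statement. For the relations, $\square_{\boldsymbol\lambda}$ sends $1$ to $\mathbf t^{\sum_{\lambda_j>0}\lambda_j\mathbf w_j}-\mathbf t^{-\sum_{\lambda_j<0}\lambda_j\mathbf w_j}=0$. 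The Euler operator $\sum_j w_{ij}x_j\nabla_{\partial_{x_j}}+\gamma_i$ sends $1$ to $\gamma_i+\sum_j w_{ij}x_j\mathbf t^{\mathbf w_j}=D_i(1)$, which is exact. So $\Phi$ factors through $\mathcal H_{\boldsymbol\gamma}$.

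\emph{Step (b), surjectivity.} Every monomial $a(\mathbf x)\mathbf t^{\mathbf u}$ with $\mathbf u\in C(A)$ is the image of $a(\mathbf x)\partial^{\mathbf i}$ for any $\mathbf i$ with $\sum i_k\mathbf w_k=\mathbf u$. Hence $\Phi$ is onto already before passing to the quotient.

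\emph{Step (c), injectivity; this is the main obstacle.} The plan is to identify both sides with the same quotient of $R$ and compare the relations.
\begin{enumerate}[(1)]
\item The right ideal of the $\square_{\boldsymbol\lambda}$ gives exactly the toric ideal $I_A$. Since $\mathbb C[\partial]/I_A\cong\mathbb C[C(A)]$, the quotient $\mathcal D/\mathcal D(\square_{\boldsymbol\lambda})$ is identified with $R$ via $\Phi$, using the $\mathbf x$-before-$\partial$ normal form.
\item Under this identification, left multiplication by $E_i+\gamma_i$, where $E_i=\sum_j w_{ij}x_j\partial_{x_j}$, must correspond to $D_i$ on $R$. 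The key identity to verify is that, for $P=x^{\mathbf a}\partial^{\mathbf b}$,
$$(E_i+\gamma_i)P=P\bigl(E_i+\gamma_i+(A\mathbf b)_i-(A\mathbf a)_i\bigr),$$
the correction coming from the $\mathbb Z^n$-grading $\deg x_j=-\mathbf w_j$, $\deg\partial_j=\mathbf w_j$.
\item On the $R$ side, $D_i(x^{\mathbf a}\mathbf t^{\mathbf u})=x^{\mathbf a}\bigl((u_i+\gamma_i)\mathbf t^{\mathbf u}+\sum_j w_{ij}x_j\mathbf t^{\mathbf u+\mathbf w_j}\bigr)$. Matching terms then shows that $\sum_i D_i(R)$ equals the image of $\sum_i\mathcal D(E_i+\gamma_i)$, modulo the already-identified $I_A$ part.
\end{enumerate}
The delicate point is in (3): $\Phi$ applied to the left ideal naturally produces $\sum_i(E_i+\gamma_i)\mathcal D$, a right-ideal-type sum, and one has to see it coincides with $\sum_i\mathcal D(E_i+\gamma_i)$ modulo $\mathcal D I_A$. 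Since $E_i+\gamma_i$ is homogeneous of degree $0$, the commutation rule in (2) gives this equality degree by degree, and I would carry out this graded bookkeeping carefully.
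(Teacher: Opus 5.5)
The paper gives no proof of this lemma; it simply quotes Adolphson's Theorem~4.4. Your steps (a), (b) and (c)(1) are fine. The injectivity argument you sketch in (c)(2)--(3), however, rests on a misidentification and would fail as planned.

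Under $\Phi$, the operator $D_i$ is \emph{not} left multiplication by $E_i+\gamma_i$ on $\mathcal D/\mathcal D I_A$. The correct commutator is $[E_i,x^{\mathbf a}\partial^{\mathbf b}]=\bigl((A\mathbf a)_i-(A\mathbf b)_i\bigr)x^{\mathbf a}\partial^{\mathbf b}$; your sign is reversed. A direct computation gives
\[
\Phi\bigl((E_i+\gamma_i)x^{\mathbf a}\partial^{\mathbf b}\bigr)=D_i\bigl(x^{\mathbf a}\mathbf t^{A\mathbf b}\bigr)+\bigl((A\mathbf a)_i-(A\mathbf b)_i\bigr)x^{\mathbf a}\mathbf t^{A\mathbf b}.
\]
So left multiplication is $D_i$ shifted by the degree, whereas $D_i$ itself corresponds to \emph{right} multiplication by $E_i+\gamma_i$.

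Consequently, the equality you plan to check in (3) is false. You want $\sum_i(E_i+\gamma_i)\mathcal D$ and $\sum_i\mathcal D(E_i+\gamma_i)$ to agree modulo $\mathcal D I_A$ degree by degree. With your grading, writing $\mathcal D_\alpha$ for the degree-$\alpha$ part, one has $(E_i+\gamma_i)\mathcal D_\alpha=\mathcal D_\alpha(E_i+\gamma_i-\alpha_i)$, so each degree sees a \emph{shifted} parameter. Already for $n=N=1$, $A=(1)$ (so $I_A=0$), the element $(x\partial+\gamma)\partial=\partial(x\partial+\gamma-1)$ is not in $\mathcal D(x\partial+\gamma)$. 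Otherwise $\partial\in\mathcal D(x\partial+\gamma)$, which is impossible by comparing orders.

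The repair uses only what you already have. By (a), $\Phi(P)=P\cdot 1$ and each $D_i$ commutes with the twisted $\mathcal D$-action on $R$. By (b), $R=\mathcal D\cdot 1$. Hence
\[
D_i(R)=D_i(\mathcal D\cdot 1)=\mathcal D\cdot D_i(1)=\mathcal D\cdot\Phi(E_i+\gamma_i)=\Phi\bigl(\mathcal D(E_i+\gamma_i)\bigr).
\]
Combine this with (1), namely $\ker\Phi=\mathcal D I_A=\mathcal D(\square_{\boldsymbol\lambda})$, which is $\mathbb C[\mathbf x]\otimes I_A$ in normal form. Then the kernel of $\mathcal D\to R/\sum_iD_i(R)$ is exactly $\sum_i\mathcal D(E_i+\gamma_i)+\mathcal D I_A$, the GKZ left ideal.

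No graded bookkeeping is needed. This is the usual identification of $\mathcal E^\cdot_{\boldsymbol\gamma}$ with the Euler--Koszul complex of $\mathbb C[C(A)]$, in which the Euler operators act on $\mathcal D/\mathcal D I_A$ by right multiplication.
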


\subsection{Proof of Proposition \ref{prop:betterGKZ}} 
Via the isomorphism in Lemma \ref{lm: AGKZdeRham},
the transition morphism
$$\cdot\partial_{x_k}: \mathcal H_{\boldsymbol\gamma+\mathbf v+\mathbf w_k}\to \mathcal H_{\boldsymbol\gamma+\mathbf v}$$
can be lifted to the chain map
\begin{eqnarray*}
\mathbf t^{\mathbf w_k}: \mathcal E^\cdot_{\boldsymbol\gamma+\mathbf v+\mathbf w_k}\to 
\mathcal E^\cdot_{\boldsymbol\gamma+\mathbf v}.
\end{eqnarray*} 
We thus get a direct system of complexes
$$(\mathcal E^\cdot_{\boldsymbol\gamma+\mathbf v}, \mathbf t^{\mathbf w_1}, 
\ldots, \mathbf t^{\mathbf w_N})_{\mathbf v\in \delta\cap \mathbb Z^n}.$$
We have 
\begin{eqnarray}\label{eqn:HE}
\varinjlim_{\mathbf v\in \delta\cap\mathbb Z^n}\mathcal H_{\boldsymbol\gamma+\mathbf v}
&\cong&\varinjlim_{\mathbf v\in \delta\cap\mathbb Z^n} \mathrm{coker}(d^{n-1}: \mathcal E^{n-1}_{\boldsymbol\gamma+\mathbf v}\to 
\mathcal E^n_{\boldsymbol\gamma+\mathbf v})\nonumber\\
&\cong& \mathrm{coker}(d^{n-1}: \varinjlim_{\mathbf v\in \delta\cap\mathbb Z^n} \mathcal E^{n-1}_{\boldsymbol\gamma+\mathbf v}\to 
\varinjlim_{\mathbf v\in \delta\cap\mathbb Z^n} \mathcal E^n_{\boldsymbol\gamma+\mathbf v})\nonumber\\
&\cong& \mathcal H^n( \varinjlim_{\mathbf v\in \delta\cap\mathbb Z^n} \mathcal E^\cdot_{\boldsymbol\gamma+\mathbf v}).
\end{eqnarray}
We need to be careful with the direct limit since $(\delta\cap\mathbb Z^n,\leq)$ is not a direct set. We use the fact 
that taking direct limit commutes with taking cokernel, which follows from the universal properties of the 
direct limit and the cokernel.
For each $\mathbf v\in \delta\cap\mathbb Z^n$, we have a chain map 
$$\mathbf t^{\mathbf v}: \mathcal E^\cdot_{\boldsymbol\gamma+\mathbf v}\to \mathcal C^\cdot_{\boldsymbol\gamma}.$$
It is a monomorphism, and its image consists of differential forms 
(with basis $\{\frac{\mathrm dt_{i_1}}{t_{i_1}}\wedge\cdots\wedge 
\frac{\mathrm dt_{i_q}}{t_{i_q}}\}$) 
whose coefficients are linear combinations
over $\mathcal O_{\mathbb A^N}$ of monomials of the form $\mathbf t^{\mathbf w}$ so that $\mathbf w\in \mathbf v+ C(A)$. 
It is compatible with the transition morphisms and hence we have a chain map
$$\varinjlim_{\mathbf v\in \delta\cap\mathbb Z^n} \mathcal E^\cdot_{\boldsymbol\gamma+\mathbf v}\to 
\mathcal C^\cdot_{\boldsymbol\gamma}.$$
This chain map is a monomorphism. It is surjective since
$$\delta\cap\mathbb Z^n=\bigcup_{\mathbf v\in \delta\cap\mathbb Z^n}(\mathbf v+ C(A)).$$ So we have
$$\varinjlim_{\mathbf w\in \delta\cap\mathbb Z^n} \mathcal E^\cdot_{\boldsymbol\gamma+\mathbf v}\cong 
\mathcal C^\cdot_{\boldsymbol\gamma}.$$
Combined with (\ref{eqn:HE}), we get $\varinjlim_{\mathbf w\in \delta\cap\mathbb Z^n} \mathcal H_{\boldsymbol\gamma+\mathbf v}\cong
\mathcal H^n(\mathcal C^\cdot_{\boldsymbol\gamma}).$ \qed

\section{Proof of Proposition \ref{prop:goodR}}

Let $U$ be an open subset of $\mathbb A^N$ such that for any rational point 
$\mathbf a$ in $\mathbb A^N$, the Laurent polynomial $F(\mathbf t, \mathbf a)$ is nondegenerate with 
respect to $\Delta_\infty$. In the following, $\mathcal K$ 
is either the coordinate ring $\mathcal O_V(V)$
of an affine open subset $V$ of $U$, or the residue field $k(\mathbf a)\cong\mathbb C$
of $\mathcal O_U$ at a rational point $\mathbf a$. Recall that $\delta$ is the cone generated by $\mathbf w_1,\ldots, \mathbf w_N$.
Let 
$$\mathcal R=\mathcal K[\delta\cap\mathbb Z^n].$$
For any $\mathbf w\in \delta$, define 
$$\rho(\mathbf w)=\inf\{r\in \mathbb R_{\geq 0}: \mathbf w\in r\Delta_\infty\}.$$
We have 
$$\rho(r\mathbf w)=r\rho(\mathbf w),\quad \rho(\mathbf w_1+\mathbf w_2)\leq \rho(\mathbf w_1)+\rho(\mathbf w_2).$$
Let $M$ be the smallest positive integer such that $$\rho(\delta\cap\mathbb Z^n)\subset \frac{1}{M}\mathbb Z,$$
and let $\mathcal R_p$ be the $\mathcal K$-submodule of $\mathcal R$ generated by
monomials $\mathbf t^{\mathbf w}$ with $\rho (\mathbf w)\leq \frac{p}{M}$. We have 
$$\mathcal R_p \mathcal R_q\subset \mathcal R_{p+q}.$$ We thus get a filtered ring $\mathcal R$. 
Let $R=\mathrm{Gr}(\mathcal R)$ be the corresponding graded ring. 
As an abelian group, $R$ and $\mathcal R$ are isomorphic. But in $R$, we have 
$\mathbf t^{\mathbf w_1}\mathbf t^{\mathbf w_2}=
\mathbf t^{\mathbf w_1+\mathbf w_2}$ if there exists a face $\Gamma$ of $\Delta_\infty$ not containing 
the origin such that both $\mathbf w_1$ and $\mathbf w_2$ lie in the cone $\mathbb R_{\geq 0}\Gamma$ 
generated by $\Gamma$, and $\mathbf t^{\mathbf w_1}\mathbf t^{\mathbf w_2}=0$  otherwise. 

Let $(C^\cdot,d)$ the complex so that  
$$C^q=
\bigoplus_{1\leq i_1<\cdots< i_q\leq n} \mathcal K[\delta\cap \mathbb Z^n] \frac{\mathrm dt_{i_1}}{t_{i_1}}\wedge\cdots\wedge 
\frac{\mathrm dt_{i_q}}{t_{i_q}},$$ and $d:C^q\to C^{q+1}$ is given by the formula (\ref{aln:d}).
In the case where $\mathcal K=\mathcal O_V(V)$, we have 
$$C\cong\Gamma(V, \mathcal C_{\boldsymbol\gamma}^\cdot).$$
In the case where $\mathcal K=k(\mathbf a)$, we have 
\begin{align*}
&\;d\Big(h\frac{\mathrm dt_{i_1}}{t_{i_1}}\wedge\cdots\wedge 
\frac{\mathrm dt_{i_q}}{t_{i_q}}\Big)= \sum_{i=1}^n
\Big(t_i\frac{\partial h}{\partial t_i}+\gamma_i h+ \sum_j a_jw_{ij} \mathbf t^{\mathbf w_j} h\Big)\frac{\mathrm dt_i}{t_i}\wedge
\frac{\mathrm dt_{i_1}}{t_{i_1}}\wedge\cdots\wedge 
\frac{\mathrm dt_{i_q}}{t_{i_q}}
\\
& \qquad\qquad=\Big(\mathbf t^{-\boldsymbol \gamma}e^{-F(\mathbf t, \mathbf a)}\circ 
\mathrm d_{\mathbf t} \circ \mathbf t^{\boldsymbol \gamma}e^{F(\mathbf t, \mathbf a)}\Big)
\Big(h\frac{\mathrm dt_{i_1}}{t_{i_1}}\wedge\cdots\wedge 
\frac{\mathrm dt_{i_q}}{t_{i_q}}\Big).
\end{align*}
The second case is just the base change of the first case with respect to 
$$\Gamma(V,\mathcal O_V)\to \mathbb C,\quad g\mapsto g(\mathbf a).$$
Equip $C^\cdot$ with the increasing filtration $F_pC^\cdot$ so that 
$$F_p C^q=\bigoplus_{1\leq i_1<\cdots< i_q\leq n} \mathcal R_{p+Mq} \frac{\mathrm dt_{i_1}}{t_{i_1}}\wedge\cdots\wedge 
\frac{\mathrm dt_{i_q}}{t_{i_q}}.$$
The operator $d: C^q\to C^{q+1}$ preserves this filtration. We have 
$$\mathrm{Gr}(C^q)=
\bigoplus_{1\leq i_1<\cdots< i_q\leq n} R \frac{\mathrm dt_{i_1}}{t_{i_1}}\wedge\cdots\wedge 
\frac{\mathrm dt_{i_q}}{t_{i_q}}.$$ 
In the case where $\mathcal K=\mathcal O_V(V)$ (resp. $\mathcal K=k(\mathbf a)$), the operator $d$ for the complex 
$\mathrm{Gr}(C^\cdot)$ is given by 
\begin{align*}
\;d\Big(h\frac{\mathrm dt_{i_1}}{t_{i_1}}\wedge\cdots\wedge 
\frac{\mathrm dt_{i_q}}{t_{i_q}}\Big)=& \sum_{i=1}^n \sum_{\mathbf w_j\in\partial \Delta_\infty} x_jw_{ij} \mathbf t^{\mathbf w_j} h
\frac{\mathrm dt_i}{t_i}\wedge\frac{\mathrm dt_{i_1}}{t_{i_1}}\wedge\cdots\wedge 
\frac{\mathrm dt_{i_q}}{t_{i_q}}\\
\Big(\hbox{resp. }&\sum_{i=1}^n \sum_{\mathbf w_j\in\partial \Delta_\infty} a_jw_{ij} \mathbf t^{\mathbf w_j} h
\frac{\mathrm dt_i}{t_i}\wedge\frac{\mathrm dt_{i_1}}{t_{i_1}}\wedge\cdots\wedge 
\frac{\mathrm dt_{i_q}}{t_{i_q}}\Big).
\end{align*}
Thus $\mathrm{Gr}(C^\cdot)$ is the Koszul complex $K^\cdot(R)$ for the graded ring $R$ with respect the image of the
sequence $t_1\frac{\partial f}{\partial t_1},\ldots,  t_n\frac{\partial f}{\partial t_n}$
in $\mathcal R_M/\mathcal R_{M-1}$, where 
$$f(\mathbf t)=\sum_{\mathbf w_j\in \partial \Delta_\infty} x_j \mathbf t^{\mathbf w_j}\quad 
(\hbox{resp. }\sum_{\mathbf w_j\in \partial \Delta_\infty} a_j \mathbf t^{\mathbf w_j}).$$
Recall that for any $R$-module $N$, the Koszul complex $K^\cdot(N)$ 
with respect to the sequence $t_1 \frac{\partial f}{\partial t_1},\ldots, t_n \frac{\partial f}{\partial t_n}$ can be defined by 
\begin{align*}
&K^q(N)=
\bigoplus_{1\leq i_1<\cdots< i_q\leq n} N \frac{\mathrm dt_{i_1}}{t_{i_1}}\wedge\cdots\wedge 
\frac{\mathrm dt_{i_q}}{t_{i_q}},\\
&\;d\Big( s\frac{\mathrm dt_{i_1}}{t_{i_1}}\wedge\cdots\wedge 
\frac{\mathrm dt_{i_q}}{t_{i_q}} \Big)=\sum_{i=1}^n \Big(t_i \frac{\partial f}{\partial t_i}\Big) s
\frac{\mathrm dt_i}{t_i}\wedge \frac{\mathrm dt_{i_1}}{t_{i_1}}\wedge\cdots\wedge 
\frac{\mathrm dt_{i_q}}{t_{i_q}}.
\end{align*}

\begin{lemma}\label{lm:finiteD} We have $H^i(C^\cdot)=0$ for $i\not =n$ and 
$H^n(C^\cdot)$ is a finitely generated $\mathcal K$-module. In the case where $\mathcal K=k(\mathbf a)$, 
we have $\mathrm{dim}\, H^n(C^\cdot)=n!\mathrm{vol}(\Delta_\infty)$
\end{lemma}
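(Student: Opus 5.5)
We follow Kouchnirenko's method: reduce to the associated graded complex $\mathrm{Gr}(C^\cdot)=K^\cdot(R)$, which is the Koszul complex on $R$ with respect to $t_1\frac{\partial f}{\partial t_1},\ldots,t_n\frac{\partial f}{\partial t_n}$ in degree $M$, and then lift the conclusions back to $C^\cdot$ through the spectral sequence of the filtration $F_p$.

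\emph{Step 1: the graded Koszul complex.} The nondegeneracy hypothesis is exactly what Kouchnirenko \cite{K} uses to show that $t_1\frac{\partial f}{\partial t_1},\ldots, t_n\frac{\partial f}{\partial t_n}$ is a regular sequence in the graded ring $R=\mathrm{Gr}(\mathcal K[\delta\cap\mathbb Z^n])$. The inputs are as follows.
\begin{enumerate}[(a)]
\item $R$ is Cohen--Macaulay. It is a quotient of a normal semigroup ring, glued along the cones over the faces of $\Delta_\infty$ not containing the origin (Hochster's theorem together with Kouchnirenko's argument).
\item The quotient $R/(t_i\partial_i f)$ is finite over $\mathcal K$. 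Modulo the ideal generated by each face ring, this reduces to the statement that the $t_i\partial_i f_\Gamma$ have no common zero on the torus orbit of each face $\Gamma$.
\end{enumerate}
For $\mathcal K=k(\mathbf a)$ this is Kouchnirenko's theorem. The Koszul complex is then exact except in top degree $n$, and Kouchnirenko's Hilbert series computation gives
$$\dim_{\mathbb C} R/(t_1\partial_1 f,\ldots,t_n\partial_n f)=n!\,\mathrm{vol}(\Delta_\infty).$$
For $\mathcal K=\mathcal O_V(V)$ the same holds fiberwise at every rational point of $V$. Since each graded piece $R_p$ is a finite free $\mathcal K$-module and the Koszul complex commutes with base change, we want $H^i(K^\cdot(R))=0$ for $i\neq n$ and $H^n$ a finite projective $\mathcal K$-module. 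We would get this degree by degree, using the fact that fiberwise exactness of a bounded complex of finite free modules implies exactness with projective top cohomology, by descending induction on cohomological degree.

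\emph{Step 2: lifting to $C^\cdot$.} The filtration $F_pC^\cdot$ is exhaustive and bounded below: $F_pC^q=0$ for $p+Mq<0$, and $\rho\ge 0$. Each $F_pC^q$ is a finitely generated $\mathcal K$-module. The spectral sequence $E_1=H(\mathrm{Gr}\,C^\cdot)\Rightarrow H(C^\cdot)$ converges degreewise; concretely, we can argue directly by induction on $p$ with the short exact sequences $0\to F_{p-1}\to F_p\to \mathrm{Gr}_p\to 0$ and then pass to the direct limit. Since $H^i(\mathrm{Gr}_p)=0$ for $i\neq n$, this gives $H^i(F_pC^\cdot)=0$ for $i\neq n$ and every $p$, and hence $H^i(C^\cdot)=0$ for $i\neq n$. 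In degree $n$, $H^n(F_pC)\to H^n(F_{p+1}C)$ is surjective, because $H^{n}(\mathrm{Gr})$ sits in the sequence and $H^{n+1}=0$. It is also injective, because $H^{n-1}(\mathrm{Gr}_{p+1})=0$. The graded pieces of $H^n$ are the $H^n(\mathrm{Gr}_p)$, which vanish for $p\gg 0$ because $R/(t_i\partial_i f)$ is finite. So $H^n(C^\cdot)$ has a finite filtration whose graded pieces are $H^n(\mathrm{Gr}_p C)$. This makes $H^n(C^\cdot)$ finitely generated over $\mathcal K$, and for $\mathcal K=k(\mathbf a)$ gives
$$\dim H^n(C^\cdot)=\dim H^n(\mathrm{Gr}\,C)=n!\,\mathrm{vol}(\Delta_\infty).$$

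\emph{Main obstacle.} The hard part is Step 1: proving that $R$ is Cohen--Macaulay and that the $t_i\partial_i f$ form a regular sequence in it. I would cite Kouchnirenko (\cite{K}, Th\'eor\`eme 2.8 and \S4) for the case of a field, checking that his setting, where $\Delta_\infty$ contains the origin and the ring is built on the cone $\delta$, matches ours. Two cases need a separate check: when the origin lies on the boundary of $\Delta_\infty$, and when $\mathbf w_1,\ldots,\mathbf w_N$ fail to generate $\mathbb Z^n$. In both, the relevant count is $n!\,\mathrm{vol}$ of lattice points in $\delta\cap\mathbb Z^n$, not in the semigroup $C(A)$. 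The passage from fields to $\mathcal O_V(V)$ is routine local-freeness bookkeeping by comparison.
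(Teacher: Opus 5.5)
Your skeleton matches the paper's. You filter $C^\cdot$ by the Newton filtration $F_pC^\cdot$, identify $\mathrm{Gr}\,C^\cdot$ with the Koszul complex $K^\cdot(R)$, and feed in Kouchnirenko. The paper's proof of this lemma is exactly that spectral-sequence reduction, with the inputs supplied by Lemmas \ref{prop:Kouchnirenko} and \ref{lm:rank}.

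Where you differ is in how you obtain the Koszul input.
\begin{itemize}
\item The paper never uses that $R$ itself is Cohen--Macaulay. It resolves $R$ by the face rings $R_\Gamma$ (the complex $A^\cdot$, shown exact by a cellular homology computation that also covers $0\in\partial\Delta_\infty$) and runs the double complex $K^q(A^p)$. On each normal ring $R_\Gamma$ it uses Hochster plus a regular sequence of length $n-p$ (Lemma \ref{lm:facial}). Over $\mathcal O_V(V)$ it argues directly: $R_\Gamma$ is Cohen--Macaulay of dimension $n-p+N$, and at the maximal ideals $i_0(\mathbf a)$ it appends $x_1-a_1,\ldots,x_N-a_N$ to the sequence.
\item You take the global statement over a field ($R$ Cohen--Macaulay of dimension $n$, the $t_i\frac{\partial f}{\partial t_i}$ a homogeneous system of parameters). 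You then pass to $\mathcal O_V(V)$ by base change on each $\mathrm{Gr}_pC^\cdot$, which is a bounded complex of finite free $\mathcal O_V(V)$-modules.
\end{itemize}
Your route is sound and gives slightly more. Each $H^n(\mathrm{Gr}_pC^\cdot)$ is projective of rank $(-1)^n\chi(\mathrm{Gr}_pC^\cdot)$, an alternating sum of lattice-point counts independent of $\mathbf a$. This is also what makes your ``$p\gg 0$'' uniform over $V$. Hence $H^n(C^\cdot)$ is projective and its formation commutes with specialization at $\mathbf a$. The cost is that Step 1 is a citation whose hypotheses you flag but do not check (origin on $\partial\Delta_\infty$, $\mathbf w_j$ not generating $\mathbb Z^n$); the paper proves the statement in that generality.

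Also, ``quotient of a normal semigroup ring'' does not by itself give Cohen--Macaulayness. A clean justification: $R$ is the Rees ring $\bigoplus_{p\geq 0}\mathcal R_pu^p$, which is the semigroup ring of the lattice points of a rational polyhedral cone and hence Cohen--Macaulay by Hochster, modulo the nonzerodivisor $u$.

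There is one slip in Step 2. The map $H^n(F_pC^\cdot)\to H^n(F_{p+1}C^\cdot)$ is not surjective in general; otherwise $H^n(C^\cdot)$ would equal $H^n(F_pC^\cdot)=0$ for $p<-Mn$. The long exact sequence reads
$$0=H^{n-1}(\mathrm{Gr}_{p+1}C^\cdot)\to H^n(F_pC^\cdot)\to H^n(F_{p+1}C^\cdot)\to H^n(\mathrm{Gr}_{p+1}C^\cdot)\to H^{n+1}(F_pC^\cdot)=0.$$
So the map is injective with cokernel $H^n(\mathrm{Gr}_{p+1}C^\cdot)$, and it becomes an isomorphism once these graded pieces vanish. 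With this correction, your finite filtration of $H^n(C^\cdot)$ and your dimension count go through as stated.
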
 

\begin{proof} We have a regular spectral sequence 
$$E_1^{pq}=H^{p+q}(F_{-p}C^\cdot/F_{-p-1}C^\cdot)
\Rightarrow H^{p+q}(C^\cdot).$$
We have 
$\bigoplus_p E_1^{p, i-p}\cong H^i(K^\cdot(R))$. 
Our assertion follows from Lemmas \ref{prop:Kouchnirenko} and \ref{lm:rank} below.
\end{proof}

\begin{lemma} \label{prop:Kouchnirenko} We have $H^i(K^\cdot (R))=0$ for $i\not =n$, and $H^n(K^\cdot (R))$ is
a finitely generated $\mathcal K$-module. 
\end{lemma}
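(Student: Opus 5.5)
This is Kouchnirenko's theorem; the plan is to show that $F_1=t_1\partial f/\partial t_1,\ldots,F_n=t_n\partial f/\partial t_n$ form a regular sequence in the graded ring $R$, and that $R$ is finite over the subring they generate. Then $H^i(K^\cdot(R))=0$ for $i\neq n$, and $H^n(K^\cdot(R))=R/(F_1,\ldots,F_n)R$ is finitely generated over $\mathcal K$.

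First I would reduce to the geometry of faces. $R$ is the graded ring of $\mathcal K[\delta\cap\mathbb Z^n]$ for the Newton filtration $\rho$; it is a finitely generated $\mathcal K$-algebra of dimension $n$ (over the field case), with multiplication ``truncated'' along cones over faces. For each face $\Gamma$ of $\Delta_\infty$ not containing $0$, consider the ring $R_\Gamma=\mathcal K[\mathbb R_{\ge0}\Gamma\cap\mathbb Z^n]$, which is a genuine semigroup ring. The natural map $R\to\prod_\Gamma R_\Gamma$ is injective; as in Kouchnirenko \cite{K}, one gets a Cohen--Macaulay property for $R$ (Hochster's theorem for normal semigroup rings) via an exact ``Ishida-type'' complex
$$0\to R\to\bigoplus_{\dim\Gamma=n-1}R_\Gamma\to\bigoplus_{\dim\Gamma=n-2}R_\Gamma\to\cdots.$$
Each $R_\Gamma$ with $\dim\Gamma=k-1$ is Cohen--Macaulay of dimension $k$.

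Next I would use nondegeneracy. In $R_\Gamma$ the images of $F_i$ are $t_i\partial f_\Gamma/\partial t_i$, and these have no common zero on the torus of $\Gamma$. Via Euler's relation, because $f_\Gamma$ is quasi-homogeneous of weight $1$ for $\rho$, their common zero locus in $\mathrm{Spec}\,R_\Gamma$ is just the vertex. By induction on faces, $R_\Gamma/(F_1,\ldots,F_n)$ is finite-dimensional when $\mathcal K$ is a field. The $F_i$ span only $\dim\Gamma+1$ independent elements, since some of their linear combinations vanish on $\mathbb R\Gamma$, so a suitable subsequence is a system of parameters. Gluing, $R/(F_1,\ldots,F_n)$ is finite-dimensional. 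Since $R$ is Cohen--Macaulay of dimension $n$ and graded with $F_i$ homogeneous of positive degree, the $F_i$ form a regular sequence. Hence the Koszul complex is acyclic except in the top degree.

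For $\mathcal K=\mathcal O_V(V)$, I would argue fiberwise. $R$ is free over $\mathcal K$, with the monomial basis, and each graded piece is finite free. The fiber at every point $\mathbf a\in V$ has Koszul cohomology concentrated in degree $n$ and finite-dimensional. By a graded Nakayama / base-change argument applied degree by degree (each graded piece of the complex is a bounded complex of finite free $\mathcal K$-modules), lower cohomology vanishes and $H^n$ is finite over $\mathcal K$; in fact $H^n$ is locally free.

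The main obstacle is the Cohen--Macaulayness and parameter argument for the non-domain ring $R$. Here I would follow Kouchnirenko's \cite{K} Théorème 2.8 closely, citing it where possible rather than reproving it.
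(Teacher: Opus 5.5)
Your overall strategy is sound: show $R$ is Cohen--Macaulay, then show $F_1,\ldots,F_n$ is a homogeneous system of parameters. But the resolution you display is wrong in the case that matters most here.

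\textbf{The resolution fails when $0\in\partial\Delta_\infty$.} Indexed by \emph{all} faces of $\Delta_\infty$ not containing $0$, the sequence $0\to R\to\bigoplus_{\dim\Gamma=n-1}R_\Gamma\to\bigoplus_{\dim\Gamma=n-2}R_\Gamma\to\cdots$ is Kouchnirenko's complex for $0$ in the interior of $\Delta_\infty$, where it must also be augmented by $\mathcal K$. It is not exact in general when $0\in\partial\Delta_\infty$. Take the homogeneous GKZ situation, where $\Delta_\infty$ is a pyramid with apex $0$ over the unique facet $Q$ not containing $0$. Then $R=R_Q$, so $R\to R_Q$ is an isomorphism. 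For $n\geq 2$ the next map $R_Q\to\bigoplus_G R_G$ (over the facets $G$ of $Q$) sends $1$ to a nonzero element, so exactness fails at the second term.

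\textbf{The fix.} Use the paper's complex $A^\cdot$: keep only the faces in $I_p$, i.e.\ those not lying on any facet of $\Delta_\infty$ through the origin, and augment by $\mathcal K$ only when $0$ is interior. Exactness then comes from the relative cellular homology of a ball modulo its boundary sphere. With this $A^\cdot$ your depth count works: $A^q$ is a direct sum of Cohen--Macaulay modules of dimension $n-q$, so $\mathrm{depth}\,R\geq n=\dim R$. The rest of your field-case argument then goes through. The $F_i$ vanish only at the vertex of each $\mathrm{Spec}\,R_\Gamma$ (by nondegeneracy on each face $\Gamma'\subseteq\Gamma$, orbit by orbit), hence only at the vertex of $\mathrm{Spec}\,R$, so they form a regular sequence.

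\textbf{Comparison with the paper.} Once repaired, your proof uses the resolution differently from the paper. The paper never proves that $R$ is Cohen--Macaulay. It tensors $A^\cdot$ with Koszul complexes and runs the spectral sequence $E_1^{pq}=H^q(K^\cdot(A^p))\Rightarrow H^{p+q}(K^\cdot(R))$. It therefore only needs $H^q(K^\cdot(R_\Gamma))=0$ for $q<\dim\Gamma+1$ on the genuine semigroup rings, which it gets from Hochster plus Lemma~\ref{lm:finite} to handle the redundant elements among the $t_i\partial f_\Gamma/\partial t_i$. It treats $\mathcal K=\mathcal O_V(V)$ by the same face-by-face argument, using local regular sequences that include $x_j-a_j$.

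Your depth argument gives the stronger statement that $F_1,\ldots,F_n$ is a regular sequence on $R$. Your fibrewise treatment of $\mathcal K=\mathcal O_V(V)$ also buys something: each graded strand of $K^\cdot(R)$ is a bounded complex of finite free $\mathcal K$-modules, exact off degree $n$ at every closed point, hence locally split. This yields local freeness of $H^n$ and base change at once.

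\textbf{One detail to add in the relative case.} To get $H^n$ finite over $\mathcal K$, you need only finitely many strands to be nonzero. This holds because each strand's rank is constant on the connected $V$, and these ranks sum to the finite dimension of a single fibre.
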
 

\begin{proof} We use Kouchnirenko's method in \cite[\S2]{K}. 
For each face $\Gamma$ of $\Delta_\infty$ not containing the origin, let 
$$R_\Gamma=\mathcal K[\mathbb R_{\geq 0}\Gamma \cap \mathbb Z^n].$$ 
For each face of $\Gamma'$ of $\Gamma$ of codimension 1, we have a $\mathcal K$-algebra homomorphism 
$$p_{\Gamma\Gamma'}: R_\Gamma\to R_{\Gamma'}, \quad\mathbf t^{\mathbf w}\mapsto 
\begin{cases}
\mathbf t^{\mathbf w}&\hbox{if }\mathbf w\in \mathbb R_{\geq 0}\Gamma'\cap\mathbb Z^n,\\
0&\hbox{otherwise}.
\end{cases}$$
Define a complex $A^\cdot$ as follows. 
For each $0\leq p\leq n-1$, let $I_p$ be the set of those $p$ dimensional faces $\Gamma$ of $\Delta_\infty$ such that
$\Gamma$ does not lie on any $n-1$ dimensional face of $\Delta_\infty$ containing the origin. For any $0\leq q\leq n-1$, set 
$$A^q=\bigoplus_{\Gamma\in I_{n-1-q}} R_\Gamma.$$
Fix an orientation for each face of $\Delta_\infty$ not containing the origin. Define $d^q: A^q\to A^{q+1}$ so that for any 
$\Gamma\in I_{n-1-q}$ and $\Gamma'\in I_{n-2-q}$, 
the composite $$R_\Gamma\hookrightarrow A^q\stackrel{d^q}\to A^{q+1}\twoheadrightarrow R_{\Gamma'}$$
is $0$ if $\Gamma'$ is not a face of $\Gamma$, 
and is $p_{\Gamma\Gamma'}$ (resp. $-p_{\Gamma\Gamma'}$) if 
$\Gamma'$ is a face of $\Gamma$ and the orientation of $\Gamma'$ is the same as (resp. opposite to) the one induced from 
the orientation of $\Gamma$, where $R_\Gamma\hookrightarrow A^q$ 
(resp. $A^{q+1}\twoheadrightarrow R_{\Gamma'}$) is the canonical inclusion 
(resp. projection). We thus get a complex 
$$0\to A^0\stackrel {d^0}\to A^1\stackrel {d^1}\to\cdots \stackrel {d^{n-2}}\to A^{n-1}\to 0.$$
We take $A^\cdot$ to be this complex if $0$ lies on the boundary of $\Delta_\infty$. 
If $0$ lies in the interior of $\Delta_\infty$, we take $A^\cdot$ to be the augmented complex 
$$0\to A^0\stackrel {d^0}\to A^1\stackrel {d^1}\to\cdots \stackrel {d^{n-2}}\to A^{n-1}\stackrel{\epsilon}\to\mathcal K\to 0,$$
where for any $\Gamma\in I_0$, the restriction of $\epsilon$ to $R_\Gamma$ is the $\mathcal K$-algebra homomorphism
$$R_\Gamma\to\mathcal K, \quad \mathbf t^{\mathbf w}\mapsto \begin{cases}
1&\hbox{if }\mathbf w=0, \\
0&\hbox{otherwise}.
\end{cases}$$

\medskip
\emph{Claim}: We have $H^q(A^\cdot)=0$ for all $q\not=0$. 
\medskip

For any $\mathbf w\in \delta\cap \mathbb Z^n$ and any face $\Gamma$ of $\Delta_\infty$ not containing the 
orgin, let 
$$R_{\Gamma}(\mathbf w)=\begin{cases}
\mathcal K\mathcal \mathbf t^{\mathbf w}&\hbox{if }\mathbf w\in \mathbb R_{\geq 0}\Gamma\cap\mathbb Z^n,\\
0&\hbox{otherwise}.
\end{cases}$$
For any $0\leq q\leq n-1$, let $$A^q(\mathbf w)=\bigoplus_{\Gamma\in I_{n-1-q}} R_\Gamma(\mathbf w).$$
If $0$ lies in the interior of $\Delta_\infty$, let 
$$A^n(\mathbf w)=\begin{cases}
\mathcal K&\hbox{if } \mathbf w=0, \\
0&\hbox{otherwise}.
\end{cases}$$
Then $A^\cdot(\mathbf w)$ is a subcomplex of $A^\cdot$ and 
$$A^\cdot=\bigoplus_{\mathbf w\in\delta\cap \mathbb Z^n}A^\cdot(\mathbf w).$$
Let $X(\mathbf w)$ be the union of those faces $\Gamma$ in $\bigcup_p I_p$ such that 
$\mathbf w\in \mathbb R_{\geq 0}\Gamma $. If $0$ lies in the interior of $\Delta_\infty$ and $\mathbf w=0$, 
then $A^\cdot(\mathbf w)$ is isomorphic to the augmented 
cellular chain complex (with coefficient $\mathcal K$) of $X(\mathbf w)$ after a degree shifting, and 
$X(\mathbf w)$ is homeomorphic to an $n-1$ dimensional sphere. So for any $q\not=0$, we have 
$$H^q(A^\cdot(\mathbf w))\cong 
\tilde H_{n-1-q}(X(\mathbf w),\mathcal K)=0.$$
If $0$ lies in the interior but $\mathbf w\not=0$, or if $0$ lies on the boundary of $\Delta_\infty$ 
and $\mathbf w\in \delta\cap \mathbb Z^n$ is arbitrary, then 
$A^\cdot(\mathbf w)$ is isomorphic to the relative cellular chain complex
of the pair $(X(\mathbf w), \partial X(\mathbf w))$ 
after a degree shifting, and the pair $(X(\mathbf w), \partial X(\mathbf w))$ is homeomorphic to a pair of 
an $n-1$ dimensional ball and its boundary sphere. For any $q\not=0$, we have 
$$H^q(A^\cdot(\mathbf w))\cong 
H_{n-1-q}\Big((X(\mathbf w), \partial X(\mathbf w)),\mathcal K\Big)=0.$$
Our claim follows.  

\medskip
For each $\Gamma\in I_{n-1}$, we have a ring 
homomorphism 
$$p_\Gamma: R\to R_{\Gamma}, \quad\mathbf t^{\mathbf w}\mapsto 
\begin{cases}
\mathbf t^{\mathbf w}&\hbox{if }\mathbf w\in \mathbb R_{\geq 0}\Gamma\cap\mathbb Z^n,\\
0&\hbox{otherwise}.
\end{cases}$$ Fix an orientation for $\Delta_\infty$.
We have a monomorphism
$$R\to \bigoplus_{\Gamma\in I_{n-1}} R_\Gamma$$
so that its composite with the projection to $R_\Gamma$  
is $p_\Gamma$ (resp. $-p_\Gamma$) if the orientation of $\Gamma$ is the same as (resp. opposite to) 
the one induced from that of $\Delta_\infty$. 
This monomorphism identifies $R$ with the kernel of $A^0\to A^1$. So 
$A^\cdot$ is a resolution of $R$. 

Consider the bicomplex $K^{\cdot\cdot}$ defined by $K^{pq}= K^q(A^p)$, where $K^\cdot(A^p)$ is the Koszul complex
for the $R$-module $A^p$. Note that $K^q(A^\cdot)$ is a resolution of $K^q(R)$ for each $q$. 
So we have a biregular spectral sequence 
$$E_1^{pq}=H^q(K^\cdot(A^p)) \Rightarrow H^{p+q}(K^\cdot(R)).$$ 
By the definition of the Koszul complex, we have $H^i(K^\cdot (R))=0$ for $i>n$. 
To prove the lemma, it suffices to show $H^q(K^\cdot(A^p))=0$ for $p+q< n$ and 
$H^q(K^\cdot(A^p))$ is a finitely generated $\mathcal K$-module for $p+q=n$. This follows from Lemma \ref{lm:facial} below.
\end{proof}

\begin{lemma}\label{lm:facial} For any $n-1-p$ dimensional face $\Gamma$ of $\Delta_\infty$ not containing the origin, 
we have $H^q(K^\cdot(R_\Gamma))=0$ for $q< n-p$, and $H^{n-p}(K^\cdot(R_\Gamma))$ is a finitely generated $\mathcal K$-module. 
\end{lemma}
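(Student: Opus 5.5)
Fix an $(n-1-p)$-dimensional face $\Gamma$ of $\Delta_\infty$ not containing the origin, and put $k=n-p=\dim\Gamma+1$. Since $0\notin\Gamma$, the cone $\mathbb R_{\geq 0}\Gamma$ spans a $k$-dimensional linear subspace $L\subset\mathbb R^n$. Choose a basis of $\mathbb Z^n$ adapted to the saturated sublattice $L\cap\mathbb Z^n$, i.e.\ coordinates $s_1,\ldots,s_n$ on the torus such that $\mathbf t^{\mathbf w}$ with $\mathbf w\in L$ involve only $s_1,\ldots,s_k$. The operators $t_i\partial_{t_i}$ are unimodular combinations of the $s_i\partial_{s_i}$, so the Koszul complex for $t_i\frac{\partial f}{\partial t_i}$ is isomorphic to the Koszul complex for $s_i\frac{\partial f}{\partial s_i}$ ($i=1,\ldots,n$). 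In $R_\Gamma$ only monomials of $L$ survive, so $f$ acts through $f_\Gamma$. Since $f_\Gamma$ involves only $s_1,\ldots,s_k$, we get $s_i\frac{\partial f_\Gamma}{\partial s_i}=0$ for $i>k$. Hence $K^\cdot(R_\Gamma)$ is isomorphic to $K^\cdot_k(R_\Gamma)\otimes\bigwedge^\cdot\mathcal K^{n-k}$, where $K^\cdot_k$ is the Koszul complex on the $k$ elements $g_i=s_i\frac{\partial f_\Gamma}{\partial s_i}$ ($i\leq k$) and the exterior factor carries zero differential. So it suffices to show $H^q(K_k^\cdot(R_\Gamma))=0$ for $q<k$ and that $H^k$ is finitely generated over $\mathcal K$. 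The cohomology of the full complex in degree $q$ is then $\bigoplus_j H^{q-j}(K_k)^{\binom{n-k}{j}}$; it vanishes for $q<k$, and in degree $k$ it equals $H^k(K_k)$.

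Now $R_\Gamma=\mathcal K[\mathbb R_{\geq0}\Gamma\cap\mathbb Z^n]$ is the semigroup ring of a saturated affine semigroup of rank $k$. By Hochster's theorem it is a normal Cohen--Macaulay domain of dimension $k$ over $\mathcal K$ when $\mathcal K$ is a field, and it is flat with Cohen--Macaulay fibres over $\mathcal K=\mathcal O_V(V)$. It is graded by $\rho$, with $\mathcal K$ in degree $0$, and the $g_i$ are homogeneous of degree $1$ (i.e.\ $M/M$). The standard criterion: if $g_1,\ldots,g_k$ generate an ideal whose radical is the irrelevant ideal $R_\Gamma^+$, then they form a homogeneous system of parameters, hence a regular sequence by Cohen--Macaulayness. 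The Koszul complex is then exact except at the top, where $H^k=R_\Gamma/(g_1,\ldots,g_k)$, and this is finite over $\mathcal K$ because it is a finitely generated graded module supported at the vertex.

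\textbf{Main step (expected obstacle).} We must show $\sqrt{(g_1,\ldots,g_k)}=R_\Gamma^+$ over every point $\mathbf a\in U$. I would follow Kouchnirenko: suppose some homogeneous prime $P\neq R_\Gamma^+$ contains all $g_i$. Homogeneous primes of the semigroup ring correspond to faces $\Gamma'$ of $\Gamma$, namely the torus orbits of $\mathrm{Spec}\,R_\Gamma$; $\Gamma'=\emptyset$ gives the vertex and $\Gamma'=\Gamma$ gives the open torus. Restricting to the orbit of $\Gamma'$ replaces $g_i$ by $s_i\frac{\partial f_{\Gamma'}}{\partial s_i}$. The resulting zero set is a cone over the grading, so it contains a point with all relevant coordinates in $\mathbb C^*$, where all $t_i\partial_{t_i}f_{\Gamma'}$ vanish. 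Since $t_i\ne0$, this contradicts the nondegeneracy of $F(\mathbf t,\mathbf a)$ on the face $\Gamma'$, which does not contain the origin.

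Over $\mathcal K=\mathcal O_V(V)$, apply this fibrewise. The fibrewise regular sequence property together with flatness of $R_\Gamma$ over $\mathcal K$ gives exactness of $K_k$ below the top degree, by the local criterion, working degree by degree since each graded piece is free of finite rank over $\mathcal K$. The top cohomology has bounded $\rho$-degree in each fibre, uniformly near each point: a graded Nakayama argument on the finitely generated module $R_\Gamma/(g)$ shows it is finitely generated over $\mathcal K$. Care is needed to keep the grading $\rho$ compatible with the restricted operators $d$ on $\mathrm{Gr}$, but the paper has already set this up.
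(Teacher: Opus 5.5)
Your argument is correct, and its core is the paper's. Both proofs make a unimodular change of torus coordinates so that only $n-p$ of the elements $t_i\frac{\partial f_\Gamma}{\partial t_i}$ matter. Both use the torus-orbit argument plus nondegeneracy on the faces of $\Gamma$ to show that these elements cut out only the vertex of $\mathrm{Spec}\,R_\Gamma$. Both then use Hochster's theorem and Koszul vanishing for a regular sequence.

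The differences are in how the pieces are assembled:
\begin{enumerate}[(1)]
\item \emph{Reducing to $n-p$ elements.} The paper chooses a spanning subset and invokes Lemma \ref{lm:finite}, which only exhibits $H^{n-p}$ as a submodule of $R_\Gamma/(g)$. You split off the zero elements as a tensor factor $\bigwedge^\cdot\mathcal K^{n-k}$ with zero differential. This gives $H^{n-p}(K^\cdot(R_\Gamma))\cong R_\Gamma/(g_1,\ldots,g_k)$ exactly.
\item \emph{Regularity.} The paper localizes at the vertex and uses that a system of parameters of a Cohen--Macaulay local ring is regular. You use the graded version with a homogeneous system of parameters.
\item \emph{The relative case $\mathcal K=\mathcal O_V(V)$.} This is the most substantial difference. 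The paper works on the total ring: $\mathcal O_V(V)[\mathbb R_{\geq 0}\Gamma\cap\mathbb Z^n]$ is Cohen--Macaulay, and the chosen elements $t_{i_j}\frac{\partial f_\Gamma}{\partial t_{i_j}}$ together with $x_1-a_1,\ldots,x_N-a_N$ form a system of parameters at each maximal ideal over the vertex section. You instead reduce to the fibres. The $\rho$-graded strands of the Koszul complex are bounded complexes of finite free $\mathcal K$-modules, so exactness on every closed fibre lifts to exactness. As a by-product, the graded pieces of $H^{n-p}$ are locally free over $\mathcal K$.
\end{enumerate}

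For finiteness, the paper's route is cheaper. The set-theoretic claim over $\mathcal O_V(V)$ gives $(R_\Gamma^+)^r\subset(g)$ for some $r$. Hence $R_\Gamma/(g)$ is a quotient of the finite $\mathcal K$-module $R_\Gamma/(R_\Gamma^+)^r$. Your uniform graded Nakayama step is valid but needs a sentence of justification. For example: if $R_\Gamma$ is generated over $\mathcal K$ in degrees $\leq e$, then vanishing of $(R_\Gamma/(g))_d$ on $e$ consecutive degrees propagates to all higher degrees, so only finitely many Nakayama neighbourhoods need to be intersected.

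One sentence in your main step is false as written. Homogeneous primes for the $\rho$-grading do not correspond to faces of $\Gamma$; only torus-invariant primes do. Already in $\mathbb C[x,y]$ with the standard grading, every $(ax+by)$ is a homogeneous prime. Your argument does not actually need this:
\begin{itemize}
\item A closed point of $V(g_1,\ldots,g_k)$ other than the vertex lies in the orbit $\mathrm{Spec}\,\mathbb C[\mathrm{Span}(\Gamma')\cap\mathbb Z^n]$ of some nonempty face $\Gamma'$ of $\Gamma$.
\item On that orbit, $g_i$ restricts to $t_i\frac{\partial f_{\Gamma'}}{\partial t_i}$.
\item Since $\mathrm{Span}(\Gamma')\cap\mathbb Z^n$ is saturated, the point lifts to $(\mathbb C^*)^n$. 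This contradicts the nondegeneracy of $F(\mathbf t,\mathbf a)$ on $\Gamma'$.
\end{itemize}
With this fix, your remark that the zero set is a cone over the grading is unnecessary.
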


\begin{proof}
In the case where $\mathcal K=\mathcal O_V(V)$ (resp. $\mathcal K=k(\mathbf a)$), let
$$f_\Gamma=\sum_{\mathbf w_j\in \Gamma}x_j\mathbf t^{\mathbf w_j}\quad 
(\hbox{resp. } \sum_{\mathbf w_j\in \Gamma}a_j\mathbf t^{\mathbf w_j}).$$
Then $K^\cdot(R_\Gamma)$ is the Koszul complex 
for the ring $R_\Gamma$ with respect to 
the sequence $t_1\frac{\partial f_\Gamma}{\partial t_1},\ldots, t_n\frac{\partial f_\Gamma}{\partial t_n}$. 
Consider the toric $\mathcal K$-scheme $\mathrm{Spec}\, R_\Gamma=
\mathrm{Spec}\, \mathcal K[\mathbb R_{\geq 0}\Gamma \cap \mathbb Z^n]$, on which we have an action by the $\mathcal K$-torus 
$\mathrm{Spec}\, \mathcal K[\mathrm{Span}(\Gamma) \cap \mathbb Z^n]$. 

\medskip
\emph{Claim}: Let $S$ be the closed subscheme of 
$\mathrm{Spec}\, R_\Gamma$ defined by the ideal of $R_\Gamma$
generated by $t_1\frac{\partial f_\Gamma}{\partial t_1},\ldots, t_n\frac{\partial f_\Gamma}{\partial t_n}$. Then the set 
$S$ is contained in the image of the section 
$$i_0: \mathrm{Spec}\,\mathcal K\to \mathcal K[\mathbb R_{\geq 0}\Gamma\cap \mathbb Z^n]$$
of the $\mathcal K$-scheme $\mathrm{Spec}\, R_\Gamma$ defined by the $\mathcal K$-epimorphism
\begin{eqnarray}\label{eqn:vertex}
\mathcal K[\mathbb R_{\geq 0}\Gamma\cap \mathbb Z^n]\to \mathcal K,\quad 
\mathbf t^{\mathbf w}\mapsto 0.
\end{eqnarray}

The closed subscheme $i_0$ is the orbit of the torus action consisting of fixed points. To prove the claim, it suffices to show 
the intersections of $S$ with other orbits of the torus action are empty. Each of the other orbits 
is isomorphic to $\mathrm{Spec}\, \mathcal K[\mathrm{Span}(\Gamma') \cap \mathbb Z^n]$ for a face $\Gamma'$ of $\Gamma$. 
The intersection of $S$ with such an orbit is the closed subscheme of defined by the ideal 
of $\mathcal K[\mathrm{Span}(\Gamma') \cap \mathbb Z^n]$
generated by $t_1\frac{\partial f_{\Gamma'}}{\partial t_1},\ldots, t_n\frac{\partial f_{\Gamma'}}{\partial t_n}$. 
If $\mathcal K=k(\mathbf a)$, this intersection is empty since $F(\mathbf t,\mathbf a)$ is nondegenerate with respect to $\Delta_\infty$. 
If $\mathcal K=\mathcal O_V(V)$, then the fiber of the intersection 
over each rational point $\mathbf a$ of $V$ is empty, and hence the whole intersection is also empty. This proves our claim. 

Note that $\mathbb Z^n/(\mathrm{Span}(\Gamma) \cap \mathbb Z^n)$ is torsion free and hence a free $\mathbb Z$-module. 
So $\mathrm{Span}(\Gamma) \cap \mathbb Z^n$ is a direct factor of $\mathbb Z^n$. Choose a basis 
$\{e'_1,\ldots, e'_n\}$ of $\mathbb Z^n$ so that $\{e'_1,\ldots, e'_{n-p}\}$ is a basis of $\mathrm{Span}(\Gamma) \cap \mathbb Z^n$.
Let $(t'_1, \ldots, t'_n)$ be the coordinate of the torus $\mathrm{Spec}\, \mathcal K[\mathbb Z^n]$
with respect to the basis $\{e'_1,\ldots, e'_n\}$. Then $f_\Gamma$ does not depend on the variables $t'_{n-p+1}, \ldots, t'_n$ 
and hence 
$$t'_{n-p+1}\frac{\partial f_{\Gamma}}{\partial t'_{n-p+1}}=\cdots= 
t'_n\frac{\partial f_{\Gamma}}{\partial t'_n}=0.$$
Over $\mathbb Z$, the set $\{t_1\frac{\partial f_\Gamma}{\partial t_1},\ldots, t_n\frac{\partial f_\Gamma}{\partial t_n}\}$
and the set $\{t'_1\frac{\partial f_\Gamma}{\partial t'_1},\ldots, t'_n\frac{\partial f_\Gamma}{\partial t'_n}\}$ span the same 
space. Thus over $\mathbb Q$, the set $\{t_1\frac{\partial f_\Gamma}{\partial t_1},\ldots, t_n\frac{\partial f_\Gamma}{\partial t_n}\}$
spans a vector space of dimension at most $n-p$. Choose a subset 
$\{t_{i_1}\frac{\partial f_\Gamma}{\partial t_{i_1}},\ldots, t_{i_{n-p}}\frac{\partial f_\Gamma}{\partial t_{n-p}}\}$
spanning the same space as the set $\{t_1\frac{\partial f_\Gamma}{\partial t_1},\ldots, t_n\frac{\partial f_\Gamma}{\partial t_n}\}$. 
Then the ideal of $R_\Gamma$ generated by these two sets are the same.

In the case $\mathcal K=k(\mathbf a)$,  $R_\Gamma$ is a Cohen-Macaulay ring of dimension $n-p$ 
by Hochster's theorem (\cite{H}). By the above claim, the maximal ideal $\mathfrak m$ defined as the kernel of the epimorphism
(\ref{eqn:vertex}) is the only prime ideal of $R_\Gamma$ containing the sequence
$t_{i_1}\frac{\partial f_\Gamma}{\partial t_{i_1}},\ldots, t_{i_{n-p}}\frac{\partial f_\Gamma}{\partial t_{n-p}}$. Hence 
the quotient of $R_\Gamma$ by the ideal generated by this sequence
has finite length. By \cite[Theorem IV.B.2]{S},   
this sequence is a regular sequence in the local ring 
$R_{\Gamma,\mathfrak m}$. By Lemma 
\ref{lm:finite} below, we have $H^q(K^\cdot(R_\Gamma))_{\mathfrak m}=0$ for $q< n-p$, and 
$H^{n-p}(K^\cdot(R_\Gamma))_{\mathfrak m}$ is finite over $\mathcal K$. By \cite[Proposition IV.A.4]{S} and the claim, 
the support of the module $H^q(K^\cdot(R_\Gamma))$ 
is contained in the set $\{\mathfrak m\}$. It follows that  $H^q(K^\cdot(R_\Gamma))=0$ for $q< n-p$ and 
$H^{n-p}(K^\cdot(R_\Gamma))$ is finite over $\mathcal K$.

In the case $\mathcal K=\mathcal O_V(V)$, $R_\Gamma$ is a Cohen-Macaulay ring of dimension $n-p+N$. 
By \cite[Proposition IV.A.4]{S}, the support of the module $H^q(K^\cdot(R_\Gamma))$ 
is contained in the closed subscheme $S$ in the above claim. Let $\mathbf a=(a_1,\ldots, a_N)$ be a rational point of $V$
so that $i_0(\mathbf a)\in S$, and 
let $\mathfrak m$ be the maximal ideal of $R_\Gamma$ corresponding to the point $i_0(\mathbf a)$. 
By the claim, the closed subscheme of $\mathrm{Spec}\,R_\Gamma$ defined by the ideal 
generated by the sequence
$$t_{i_1}\frac{\partial f_\Gamma}{\partial t_{i_1}},\ldots, t_{i_{n-p}}\frac{\partial f_\Gamma}{\partial t_{n-p}}, x_1-a_1, \ldots, x_N-a_N$$
is $i_0(\mathbf a)$ as a set. So the quotient of $R_{\Gamma}$ by the ideal generated by this sequence
has finite length,
and hence the sequence is a regular sequence in $R_{\Gamma,\mathfrak m}$. By Lemma 
\ref{lm:finite} below, we have
$H^q(K^\cdot(R_\Gamma))_{\mathfrak m}=0$ for $q< n-p$ and 
$H^{n-p}(K^\cdot(R_\Gamma))_{\mathfrak m}$ can be identified with a submodule of
$$\Big(R_\Gamma\Big/\Big(t_{i_1}\frac{\partial f_\Gamma}{\partial t_{i_1}},\ldots, t_{i_{n-p}}
\frac{\partial f_\Gamma}{\partial t_{n-p}}\Big)\Big)
_{\mathfrak m}.$$ This is true for all maximal ideal $\mathfrak m$ in the support of the Koszul cohomology. So 
$H^q(K^\cdot(R_\Gamma))=0$ for $q< n-p$, and $H^{n-p}(K^\cdot(R_\Gamma))$ is a submodule 
of $R_\Gamma\big/\big(t_{i_1}\frac{\partial f_\Gamma}{\partial t_{i_1}},\ldots, t_{i_{n-p}}
\frac{\partial f_\Gamma}{\partial t_{n-p}}\big)$ which 
is finitely generated over $\mathcal K$ by the claim. 
\end{proof}

\begin{lemma}\label{lm:finite} Let $\mathcal K$ be a commutative ring, $R$ a commutative 
$\mathcal K$-algebra,  $N$ an $R$-module, 
$f_1, \ldots, f_n$ a sequence in $R$, and $K^\cdot(f_1, \ldots, f_n,N)$ the Koszul complex for 
$N$ with respect to the sequence $f_1, \ldots, f_n$. Suppose $f_1, \ldots, f_d$ is a regular sequence on $N$
for some $d\leq n$. Then we have $H^q(K^\cdot(f_1, \ldots, f_n, N))=0$ for $q<d$ and 
$H^d(K^\cdot(f_1, \ldots, f_n, N))$ can be identified with a submodule of
of $N/(f_1, \ldots, f_d)N$. In particular, if $\mathcal K$ is noetherian and $N/(f_1, \ldots, f_d)N$ is 
a finitely generated $\mathcal K$-module, then so is $H^d(K^\cdot(f_1, \ldots, f_n, N))$. 
\end{lemma}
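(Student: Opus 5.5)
The plan is to induct on $n$, the length of the sequence, with $d$ fixed, and to use the standard long exact sequence for Koszul complexes. It is convenient to use the cohomological Koszul complex as in the paper, where $K^q$ is built from $q$-forms and the differential is wedge with $\sum f_i\,\frac{\mathrm dt_i}{t_i}$. In that convention $H^0(K^\cdot(f_1,N))=\{s: f_1s=0\}$ and $H^1(K^\cdot(f_1,N))=N/f_1N$.

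\emph{Base case $n=d$.} Since $f_1,\ldots,f_d$ is a regular sequence on $N$, the classical depth-sensitivity of the Koszul complex gives $H^q(K^\cdot(f_1,\ldots,f_d,N))=0$ for $q<d$ and $H^d=N/(f_1,\ldots,f_d)N$. This itself is proved by induction on $d$ via the exact triangle
$$K^\cdot(f_1,\ldots,f_{d-1},N)[-1]\to K^\cdot(f_1,\ldots,f_d,N)\to K^\cdot(f_1,\ldots,f_{d-1},N).$$
The connecting map in this triangle is multiplication by $\pm f_d$. The induction hypothesis gives vanishing below $d-1$ and $H^{d-1}=N/(f_1,\ldots,f_{d-1})N$, on which $f_d$ is injective. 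So the long exact sequence yields $H^q=0$ for $q<d$ and $H^d=\mathrm{coker}(f_d)$ on $N/(f_1,\ldots,f_{d-1})N$, which is $N/(f_1,\ldots,f_d)N$.

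\emph{Inductive step, passing from $n-1$ to $n$ with $n>d$.} Write $K_{n}=K^\cdot(f_1,\ldots,f_n,N)$. The same decomposition, $K^q_n=K^q_{n-1}\oplus K^{q-1}_{n-1}\wedge\frac{\mathrm dt_n}{t_n}$, gives a short exact sequence of complexes
$$0\to K_{n-1}[-1]\to K_n\to K_{n-1}\to 0.$$
Its long exact sequence is
$$\cdots\to H^{q-1}(K_{n-1})\xrightarrow{\pm f_n} H^{q-1}(K_{n-1})\to H^q(K_n)\to H^q(K_{n-1})\to\cdots.$$
For $q<d$, both neighbouring terms $H^{q-1}(K_{n-1})$ and $H^q(K_{n-1})$ vanish by induction, so $H^q(K_n)=0$. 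For $q=d$, $H^{d-1}(K_{n-1})=0$, so $H^d(K_n)\hookrightarrow H^d(K_{n-1})$. By induction, $H^d(K_{n-1})$ is identified with a submodule of $N/(f_1,\ldots,f_d)N$, hence so is $H^d(K_n)$. One bookkeeping point needs checking: the subcomplex must be exactly the one involving $\frac{\mathrm dt_n}{t_n}$, so that the quotient is $K_{n-1}$ and the sequence is exact.

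\emph{Finiteness.} If $\mathcal K$ is noetherian and $N/(f_1,\ldots,f_d)N$ is finitely generated over $\mathcal K$, then any $\mathcal K$-submodule of it is finitely generated. The identifications above are $R$-linear, hence $\mathcal K$-linear, so $H^d$ is finitely generated over $\mathcal K$.

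I expect no real obstacle. The only care needed is getting the shift and sign conventions of the cone sequence right, so that the connecting homomorphism is multiplication by $\pm f_n$ and the injectivity argument in the base case applies as stated.
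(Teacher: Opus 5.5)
Your proposal is correct and follows essentially the same route as the paper. The paper also inducts on $n$ via the short exact sequence $0\to K^\cdot(f_1,\ldots,f_{n-1},N)[-1]\to K^\cdot(f_1,\ldots,f_n,N)\to K^\cdot(f_1,\ldots,f_{n-1},N)\to 0$, obtaining vanishing below $d$ and an injection into $H^d(K^\cdot(f_1,\ldots,f_d,N))\cong N/(f_1,\ldots,f_d)N$, and it settles the base case $n=d$ by induction on $d$ using injectivity of $f_d$ on $N/(f_1,\ldots,f_{d-1})N$.
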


\begin{proof} We use induction on $n$. 
Let $e_1, \ldots, e_n$ be a basis of a free $R$-module. Recall that the Koszul complex $K^\cdot(f_1, \ldots, f_n,N)$
can be defined by
\begin{align*}
K^q(f_1, \ldots, f_n,N)&=\bigoplus_{1\leq i_1<\ldots<i_q\leq n}N e_{i_1}\wedge\cdots\wedge e_{i_q},\\
d(\omega)&= \omega\wedge (f_1e_1+\cdots+f_n e_n).
\end{align*}
We have a short exact sequence of complexes
$$0\to K^\cdot(f_1, \ldots, f_{n-1},N)[-1]\stackrel{\wedge e_n}\to K^\cdot(f_1, \ldots, f_n,N)
\to K^\cdot(f_1, \ldots, f_{n-1},N)\to 0.$$ It induces a long exact sequence
\begin{align}\label{eqn:longseq}
&\\
&\cdots\to H^{q-1}(K^\cdot(f_1, \ldots, f_{n-1},N))
\to H^q(K^\cdot(f_1, \ldots, f_n,N))\to H^q(K^\cdot(f_1, \ldots, f_{n-1},N))\to \cdots.\nonumber
\end{align}
If $d\leq n-1$, then by the induction hypothesis, 
$H^{q-1}(K^\cdot(f_1, \ldots, f_{n-1},N))$ and $H^q(K^\cdot(f_1, \ldots, f_{n-1},N))$ vanish 
for $q<d$. This implies that $H^q(K^\cdot(f_1, \ldots, f_n,N))=0$ for $q<d$, and 
$H^d(K^\cdot(f_1, \ldots, f_n,N))$ is a submodule of 
$H^d(K^\cdot(f_1, \ldots, f_{n-1},N))$. Repeating this argument, we see
$H^d(K^\cdot(f_1, \ldots, f_n,N))$ is a submodule of 
$H^d(K^\cdot(f_1, \ldots, f_d,N))\cong N/(f_1, \ldots, f_d)N$. 

It remains to 
prove the initial step $n=d$ of the induction, that is, 
$H^q(K^\cdot(f_1, \ldots, f_d,N))=0$ for $q<d$. We prove this by induction on $d$. For $d=1$, this follows from the 
assumption that $f_1: N\to N$ is injective. In general, the long exact sequence
(\ref{eqn:longseq}) with $n$ replaced by $d$ and the induction hypothesis 
$$H^q(K^\cdot(f_1, \ldots, f_{d-1},N))=0 \hbox{ for } q<d-1$$
imply that $$H^q(K^\cdot(f_1, \ldots, f_d,N))=0 \hbox{ for } q<d-1.$$
Moreover, the long exact sequence
(\ref{eqn:longseq}) with $q=d-1$ and $n=d$ can be identified with 
$$0\to H^{d-1}(K^\cdot(f_1, \ldots, f_d,N))\to N/(f_1,\ldots, f_{d-1})N
\stackrel {f_d}\to  N/(f_1,\ldots, f_{d-1})N.$$
By our assumption, $f_d: N/(f_1,\ldots, f_{d-1})N
\to  N/(f_1,\ldots, f_{d-1})N$ is injective. It follows that $H^{d-1}(K^\cdot(f_1, \ldots, f_d,N))=0$. 
\end{proof}

\begin{lemma}\label{lm:rank} Keep the notation in Lemma \ref{prop:Kouchnirenko}.
In the case where $\mathcal K=k(\mathbf a)$, we have 
$\mathrm{dim}\, H^n(K^\cdot(R))=n!\mathrm{vol}(\Delta_\infty)$. 
\end{lemma}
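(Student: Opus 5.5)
The idea is to use that, when $\mathcal K=k(\mathbf a)$, Lemma \ref{prop:Kouchnirenko} makes the Koszul complex $K^\cdot(R)$ exact except in degree $n$. Hence $\dim H^n(K^\cdot(R))$ equals the Euler characteristic of $K^\cdot(R)$ up to sign. This Euler characteristic can be computed from Hilbert series, since $R$ is a graded $\mathbb C$-algebra with finite-dimensional graded pieces.

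Grade $R$ by $\rho$. The piece $R_{k/M}$ is spanned by the monomials $\mathbf t^{\mathbf w}$ with $\mathbf w\in\delta\cap\mathbb Z^n$ and $\rho(\mathbf w)=k/M$. The Koszul differential is multiplication by the elements $t_i\partial f/\partial t_i$, which are homogeneous of degree $1$ (they lie in $\mathcal R_M/\mathcal R_{M-1}$). So $K^q(R)=R(q)^{\binom nq}$ up to the shift, and $K^\cdot(R)$ splits into finite-dimensional complexes of graded pieces. Let
$$P(T)=\sum_k \dim R_{k/M}\,T^{k/M}.$$
Exactness off degree $n$ then gives the Hilbert series of $H^n$ as $(1-T)^nP(T)$, up to sign and monomial shifts. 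Since $H^n$ is finite-dimensional, this is a polynomial in $T^{1/M}$, and its value at $T=1$ is $\dim H^n(K^\cdot(R))$. Thus
$$\dim H^n(K^\cdot(R))=\lim_{T\to 1}(1-T)^nP(T).$$

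The main step is computing this limit. Write $P$ via the counting function $N(k)=\#\{\mathbf w\in\delta\cap\mathbb Z^n:\rho(\mathbf w)\le k\}=\#(k\Delta_\infty\cap\mathbb Z^n)$. This identification uses that $\delta\cap\mathbb Z^n$ consists exactly of lattice points in the cone over $\Delta_\infty$, and that $\{\rho\le k\}=k\Delta_\infty\cap\delta$. Then
$$P(T)=(1-T^{1/M})\sum_{k\in\frac1M\mathbb Z_{\ge0}}N(k)\,T^{k}.$$
By Ehrhart-type asymptotics, $N(k)=\mathrm{vol}(\Delta_\infty)k^n+O(k^{n-1})$. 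Comparing with $\sum_k k^nT^k$ over $k\in\frac1M\mathbb Z_{\ge0}$, the sum behaves like $M\,n!\,(1-T)^{-n-1}$ as $T\to1$, while $1-T^{1/M}\sim(1-T)/M$. Together these give the leading behaviour $P(T)\sim n!\,\mathrm{vol}(\Delta_\infty)(1-T)^{-n}$. The lower-order terms contribute $O((1-T)^{-n+1})$, which vanish after multiplying by $(1-T)^n$. Hence the limit is $n!\,\mathrm{vol}(\Delta_\infty)$.

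The delicate point is justifying that the Euler characteristic and Hilbert series manipulations are legitimate, and matching the leading coefficient. The first part is fine because each graded strand is finite-dimensional and the complex has length $n+1$. For the second, I would be careful that the grading is in $\frac1M\mathbb Z$ and not in $\mathbb Z$, and I would handle this by substituting $s=T^{1/M}$. I would also check that $\dim H^n=\lim(1-T)^nP(T)$ holds exactly, not just up to sign: the alternating sum $\sum_q(-1)^q\binom nq T^{q}P(T)=(1-T)^nP(T)$ matches $(-1)^n$ times the Hilbert series of $H^n$, with the degree shifts evaluating to $1$ at $T=1$.
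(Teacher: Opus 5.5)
Your proposal is correct, and it computes the leading coefficient by a different route from the paper's. Both arguments start from the same identity: exactness of $K^\cdot(R)$ off degree $n$ gives $P_{H^n}=(1-t^M)^nP_R$, which is your $(1-T)^nP(T)$ with $T=t^M$.

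\emph{The paper's route.} It reuses the resolution $A^\cdot$ of $R$ by the face rings $R_\Gamma$ from the proof of Lemma \ref{prop:Kouchnirenko}. This gives $P_R=\sum_q(-1)^q\sum_{\Gamma\in I_{n-1-q}}P_{R_\Gamma}$, plus a constant term when $0$ is interior to $\Delta_\infty$. It then quotes Kouchnirenko's Lemme 2.9: $P_{R_\Gamma}$ has a pole of order $\dim\Gamma+1$ at $t=1$. So only the facets survive, each contributing $n!\,\mathrm{vol}\bigl(\bigcup_{t\in[0,1]}t\Gamma\bigr)$, and these pyramids tile $\Delta_\infty$.

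\emph{Your route.} You observe that $P_R$ depends only on the graded vector space underlying $R$, not on its ring structure. That space is simply the lattice-point generating function of $\delta\cap\mathbb Z^n$ graded by $\rho$. So one counting function $N(k)=\#(k\Delta_\infty\cap\mathbb Z^n)\sim\mathrm{vol}(\Delta_\infty)k^n$ controls the pole, together with an Abelian limit as $T^{1/M}\to1^-$. That limit is legitimate because $(1-T)^nP(T)$ is a polynomial in $T^{1/M}$.

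\emph{What each buys.} Your approach avoids the face decomposition, Kouchnirenko's lemma, and the paper's case distinction on whether $0$ lies on the boundary or in the interior of $\Delta_\infty$. The paper's approach works with rational functions evaluated at $t=1$ and gives finer information: rationality of $P_R$ and the contribution of each face separately. It delegates the volume computation to Kouchnirenko face by face.

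\emph{A sign slip.} In your last paragraph the grading shifts go the wrong way. With $K^q(R)=R(q)^{\binom nq}$, so that $d$ has degree $0$, the Poincar\'e series of $K^q(R)$ is $\binom nq T^{-q}P(T)$, not $\binom nq T^{q}P(T)$. The alternating sum is therefore $(1-T^{-1})^nP(T)=(-1)^nT^{-n}(1-T)^nP(T)$. Setting this equal to $(-1)^nP_{H^n}(T)$ gives $P_{H^n}(T)=T^{-n}(1-T)^nP(T)$, hence $\dim H^n=\lim_{T\to1}(1-T)^nP(T)$ with the correct sign. With the shifts $T^{q}$ as you wrote them, you would instead get $\dim H^n=(-1)^n\lim_{T\to1}(1-T)^nP(T)$, which is negative for odd $n$. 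This is easily fixed and does not affect the rest of your argument.
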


\begin{proof} 
For a graded $R$-module $M=\bigoplus_{d=0}^\infty M_d$, let 
$$P_M(t)= \sum_{d=0}^\infty (\mathrm{dim}\, M_d) t^d$$ 
be the Poincar\'e series. The operator $d: K^q(R)\to K^{q+1}(R)$ 
increases the degree by $M$. So $H^n(K^\cdot(R))$ is a graded $R$-module. Since $H^p(K^\cdot(R))=0$ for $p\not =n$, 
we have 
$$(-1)^n\mathrm{dim}\, H^n(K^\cdot(R))_d=\sum_{q=0}^n (-1)^q \mathrm{dim}\,K^q(R)_{d-(n-q)M}
=\sum_{q=0}^n (-1)^q  {n\choose q} \mathrm{dim}\, R_{d-(n-q)M}.$$
Therefore 
$$P_{H^n(K^\cdot(R))}(t)= P_R(t) (1-t^M)^n.$$
The resolution $A^\cdot$ of $R$ in the proof of Lemma \ref{prop:Kouchnirenko} preserves the degree. So we have 
$$P_R(t)= \sum_{q} (-1)^q P_{A^q}(t)=\sum_{q=0}^{n-1}\sum_{\Gamma\in I_{n-1-q}} (-1)^q P_{R_\Gamma}(t)$$
if $0$ lies on the boundary of $\Delta_\infty$, and we add a term $(-1)^n P_{\mathcal K}(t)$ if 
$0$ lies in the interior of $\Delta_\infty$.
A direct computation (\cite[Lemme 2.9]{K}) shows that $P_{R_\Gamma}(t)$ is a rational function of $t$, with a pole of order 
$\mathrm{dim}\,\Gamma+1$ at $t=1$, and if $\mathrm{dim}\,\Gamma=n-1$, we have
$$P_{R_\Gamma}(t)(1-t^M)^n|_{t=1}=n!\mathrm{vol}\Big(\bigcup_{t\in [0,1]}t\Gamma\Big).$$
So we have
\begin{align*}
&\mathrm{dim}\, H^n(K^\cdot (R))=P_{H^n(K^\cdot(R))}(t)|_{t=1}
= P_R(t) (1-t^M)^n|_{t=1}\\
&=\sum_{q=0}^{n-1}\sum_{\Gamma\in I_{n-1-q}} (-1)^q P_{R_\Gamma}(t)(1-t^M)^n|_{t=1}
=\sum_{\Gamma\in I_{n-1}} n!\mathrm{vol}\Big(\bigcup_{t\in [0,1]}t\Gamma\Big)
=n!\mathrm{vol}(\Delta_\infty)
\end{align*}
if $0$ lies on the boundary of $\Delta_\infty$. If $0$ lies in the interior of $\Delta_\infty$, the result is the same since we have 
$P_{\mathcal K}(t)(1-t^M)^n|_{t=1}=0$. 
\end{proof}

\subsection{Proof of Proposition \ref{prop:goodR}} 
Let $V$ be an arbitrary affine open subset of $U$.
We have $\Gamma(V,\mathcal H^i(\mathcal C_{\boldsymbol\gamma}))\cong H^i(C^\cdot)$ with $\mathcal K=\mathcal O_V(V)$. By Lemma \ref{lm:finiteD}, $H^i(C^\cdot)=0$ and $H^n(C^\cdot)$ is a finitely generated $\mathcal O_V(V)$-module.
So $\mathcal H^i(\mathcal C^\cdot_{\boldsymbol\gamma})|_V=0$ for $i\not=n$, and 
$\mathcal H^n(\mathcal C^\cdot_{\boldsymbol\gamma})|_V$ is a coherent 
$\mathcal O_V$-module, and hence an integrable connection. Let 
$\mathbf a$ be a rational point on $U$, and let $i_{\mathbf a}: \mathbf a\to \mathbb A^N$ be the closed immersion. 
Since $\mathcal C^\cdot_{\boldsymbol\gamma}$ is a complex of flat $\mathcal O_{\mathbb A^N}$-modules
and each $d: \mathcal C^q_{\boldsymbol\gamma}\to \mathcal C^{q+1}_{\boldsymbol\gamma}$ is 
$\mathcal O_{\mathbb A^N}$-linear, we have 
$i^*_{\mathbf a} \mathcal H^n(\mathcal C^\cdot_{\boldsymbol\gamma})
\cong H^n(i^*_{\mathbf a} \mathcal C^\cdot_{\boldsymbol\gamma})$. Note that
$i^*_{\mathbf a} \mathcal C^\cdot_{\boldsymbol\gamma}$ is represented by the complex $C^\cdot$ with 
$\mathcal K=k(\mathbf a)$. By Lemma \ref{lm:finiteD}, the dimension of the fiber at $\mathbf a$ of the integrable connection 
$\mathcal H^n(\mathcal C^\cdot_{\boldsymbol\gamma})|_V$ is $n!\mathrm{vol}(\Delta_\infty)$. \qed

\end{document}